\documentclass[final]{siamltex}

\usepackage{amsmath, amssymb, mathtools, extarrows, cite, multirow,extarrows}
\usepackage{tikz-cd}
\allowdisplaybreaks

\newcommand{\mzero}{\mathbf O }
\newcommand{\vzero}{\mathbf 0 }
\newcommand{\hatdSmR}[2]{\widehat{\mathbf d}^{#1}}
\newcommand{\cond}[2]{\textup{({#1}{\scriptsize {#2}})}}

\newtheorem{remark}[theorem]{Remark}
\newtheorem{example}[theorem]{Example}

\makeatletter
\newif\if@borderstar
\def\bordermatrix{\@ifnextchar*{%
\@borderstartrue\@bordermatrix@i}{\@borderstarfalse\@bordermatrix@i*}%
}
 \def\@bordermatrix@i*{\@ifnextchar[{\@bordermatrix@ii}{\@bordermatrix@ii[()
]}}
 \def\@bordermatrix@ii[#1]#2{%
 \begingroup
 \m@th\@tempdima8.75\p@\setbox\z@\vbox{%
 \def\cr{\crcr\noalign{\kern 2\p@\global\let\cr\endline }}%
 \ialign {$##$\hfil\kern 2\p@\kern\@tempdima & \thinspace %
 \hfil $##$\hfil && \quad\hfil $##$\hfil\crcr\omit\strut %
 \hfil\crcr\noalign{\kern -\baselineskip}#2\crcr\omit %
 \strut\cr}}%
 \setbox\tw@\vbox{\unvcopy\z@\global\setbox\@ne\lastbox}%
 \setbox\tw@\hbox{\unhbox\@ne\unskip\global\setbox\@ne\lastbox}%
 \setbox\tw@\hbox{%
 $\kern\wd\@ne\kern -\@tempdima\left\@firstoftwo#1%
 \if@borderstar\kern2pt\else\kern -\wd\@ne\fi%
 \global\setbox\@ne\vbox{\box\@ne\if@borderstar\else\kern 2\p@\fi}%
 \vcenter{\if@borderstar\else\kern -\ht\@ne\fi%
 \unvbox\z@\kern-\if@borderstar2\fi\baselineskip}%
 \if@borderstar\kern-2\@tempdima\kern2\p@\else\,\fi\right\@secondoftwo
#1 $%
 }\null \;\vbox{\kern\ht\@ne\box\tw@}%
 \endgroup
 }
 \makeatother

\title{On the Uniqueness of the Canonical Polyadic Decomposition of  third-order tensors --- Part I: Basic Results and Uniqueness of One Factor Matrix
\thanks{Research supported by: (1) Research Council KU Leuven:
GOA-Ambiorics,  GOA-MaNet,  CoE EF/05/006 Optimization in Engineering (OPTEC),
CIF1,  STRT 1/08/23,  (2) F.W.O.: (a) project G.0427.10N,  (b) Research Communities
ICCoS,  ANMMM and MLDM,  (3) the Belgian Federal Science Policy
Office: IUAP P6/04 (DYSCO,  ``Dynamical systems,  control and
optimization'',  2007--2011),  (4) EU: ERNSI.}}

\author{Ignat Domanov\footnotemark[2] \footnotemark[3]
\and
Lieven De Lathauwer\footnotemark[2] \footnotemark[3]}

\begin{document}

\maketitle

\renewcommand{\thefootnote}{\fnsymbol{footnote}}

\footnotetext[2]{Group Science,  Engineering and Technology,  KU Leuven--Kulak,
E. Sabbelaan 53,  8500 Kortrijk,  Belgium,
 ({\tt ignat.domanov,\ lieven.delathauwer@kuleuven-kulak.be}).}
\footnotetext[3]{Department of Electrical Engineering (ESAT),  SCD--SISTA,  KU Leuven,
Kasteelpark Arenberg 10,  postbus 2440,  B-3001 Heverlee (Leuven),  Belgium.}

\begin{abstract}
Canonical Polyadic Decomposition (CPD) of a higher-order tensor is  decomposition in a minimal number of rank-$1$ tensors. We give an overview of existing results concerning uniqueness. We present new, relaxed, conditions that guarantee uniqueness of one factor matrix. These conditions involve Khatri-Rao products of compound matrices. We make links with existing results involving ranks and k-ranks of factor matrices. We give a shorter proof, based on properties of second compound matrices, of existing results concerning overall CPD uniqueness in the case where one factor matrix has full column rank. We develop basic material involving $m$-th compound matrices that will be instrumental in Part II for establishing overall CPD uniqueness in cases where none of the factor matrices has full column rank.
\end{abstract}

\begin{keywords}
Canonical Polyadic Decomposition, Candecomp, Parafac, three-way array, tensor, multilinear algebra, Khatri-Rao product, compound matrix
\end{keywords}

\begin{AMS}
15A69, 15A23
\end{AMS}

\pagestyle{myheadings}
\thispagestyle{plain}
\markboth{IGNAT DOMANOV AND LIEVEN DE LATHAUWER}{On the Uniqueness of the Canonical Polyadic Decomposition --- Part I}

\section{Introduction}
\subsection{Problem statement}\label{Subsection1.1}
Throughout the paper $\mathbb F$ denotes the field of real or complex numbers;
$(\cdot)^T$ denotes transpose; $r_{\mathbf A}$ and $\textup{range}(\mathbf A)$ denote the rank and the range of a matrix $\mathbf A$, respectively;
$\textup{\text{Diag}}(\mathbf d)$ denotes a square diagonal matrix with the elements of a vector $\mathbf d$ on the main diagonal;
$\omega(\mathbf d)$ denotes the number of nonzero components of $\mathbf d$;
$C_n^k$ denotes the binomial coefficient,  $C_n^k=\frac{n!}{k!(n-k)!}$;
$\mzero_{m\times n}$, $\vzero_m$, and $\mathbf I_n$ are the zero $m\times n$ matrix, the zero $m\times 1$ vector, and the $n\times n$ identity matrix, respectively.

 We have the following basic definitions.
\begin{definition}\label{Def: outer product}
A third order-tensor $\mathcal T\in\mathbb F^{I\times J\times K}$ is {\em rank-$1$}
if it equals the outer product of three nonzero vectors $\mathbf a\in\mathbb F^I$,
$\mathbf b\in\mathbb F^J$ and $\mathbf c\in\mathbb F^K$, which means that $t_{ijk} = a_i b_j c_k$ for all values of the indices.
\end{definition}

A rank-1 tensor is also called a {\em simple tensor} or a {\em decomposable tensor}. The outer product in the definition is written as
$\mathcal T=\mathbf a\circ\mathbf b\circ\mathbf c$.

\begin{definition}
A {\em Polyadic Decomposition} (PD) of a third-order tensor $\mathcal T\in\mathbb F^{I\times J\times K}$ expresses $\mathcal T$ as a sum of  rank-$1$ terms:
\begin{equation}
\mathcal T=\sum\limits_{r=1}^R\mathbf a_r\circ \mathbf b_r\circ \mathbf c_r,
 \label{eqintro2}
\end{equation}
where $\mathbf a_r \in \mathbb F^{I}$, $\mathbf b_r \in \mathbb F^{J}$, $\mathbf c_r \in \mathbb F^{K}$, $1 \leq r \leq R$.
\end{definition}

We call the matrices
$\mathbf A =\left[\begin{matrix}\mathbf a_1&\dots&\mathbf a_R\end{matrix}\right] \in\mathbb F^{I\times R}$, $\mathbf B =\left[\begin{matrix}\mathbf b_1&\dots&\mathbf b_R\end{matrix}\right]\in\mathbb F^{J\times R}$ and $\mathbf C =\left[\begin{matrix}\mathbf c_1&\dots&\mathbf c_R\end{matrix}\right]\in\mathbb F^{K\times R}$
the  first, second and third factor matrix of $\mathcal T$, respectively. We also write (\ref{eqintro2}) as $\mathcal T=[\mathbf A,\mathbf B,\mathbf C]_R$.

\begin{definition}
The {\em rank} of a tensor $\mathcal T \in\mathbb F^{I\times J\times K}$ is defined
as the minimum number of rank-$1$ tensors in a PD  of  $\mathcal T$ and is denoted by $r_{\mathcal T}$.
\end{definition}

In general, the rank of a third-order tensor depends on $\mathbb F$ \cite{Kruskal1989}: a tensor over $\mathbb R$ may have a different rank than the
same tensor considered over $\mathbb C$.

\begin{definition}
A {\em Canonical Polyadic  Decomposition} (CPD) of a third-order tensor $\mathcal T$ expresses $\mathcal T$ as a minimal sum of rank-$1$ terms.
\end{definition}

Note that $\mathcal T=[\mathbf A,\mathbf B,\mathbf C]_R$ is a CPD of $\mathcal T$ if and only if $R=r_{\mathcal T}$.

Let us reshape $\mathcal T$ into  a vector $\mathbf t\in\mathbb F^{IJK\times 1}$ and a matrix $\mathbf T\in\mathbb F^{IJ\times K}$ as follows:
the $\left(i,j,k\right)$-th entry of $\mathcal T$ corresponds to
the $\left((i-1)JK+(j-1)K+k\right)$-th entry of $\mathbf t$ and to
the $\left((i-1)J+j,k\right)$-th entry of $\mathbf T$.
In particular, the rank-1 tensor $\mathbf a\circ\mathbf b\circ\mathbf c$ corresponds to
the vector $\mathbf a\otimes \mathbf b\otimes \mathbf c$ and to the rank-1 matrix $(\mathbf a\otimes\mathbf b)\mathbf c^T$,
where ``$\otimes$'' denotes the {\em Kronecker product}:
$$
\mathbf{a}  \otimes \mathbf{b} = \left[\begin{matrix}a_1\mathbf b^T&\dots& a_I\mathbf b^T\end{matrix}\right]^T=
\left[\begin{matrix}a_1b_1\dots a_1b_J&\dots& a_Ib_1\dots a_Ib_J\end{matrix}\right]^T.
$$
Thus, \eqref{eqintro2} can be identified either with
\begin{equation}
\mathbf t=\sum\limits_{r=1}^R\mathbf a_r\otimes \mathbf b_r\otimes \mathbf c_r,
\label{decomposition11}
\end{equation}
or with the matrix decomposition
\begin{equation}
\mathbf T=\sum\limits_{r=1}^R(\mathbf a_r\otimes \mathbf b_r){\mathbf c}_r^T.
\label{decomposition21}
\end{equation}
Further, \eqref{decomposition21} can be rewritten as a factorization of $\mathbf T$,
\begin{equation}
\mathbf T=(\mathbf A\odot\mathbf B)\mathbf C^T,
\label{eqT_V}
\end{equation}
where ``$\odot$'' denotes the {\em Khatri-Rao product} of matrices:
   $$
   \mathbf A\odot\mathbf B :=
   [\mathbf a_1\otimes\mathbf b_1\ \cdots\ \mathbf a_R\otimes\mathbf b_R]\in \mathbb F^{I J\times R}.
   $$
It is clear that in \eqref{eqintro2}--\eqref{decomposition21} the rank-1 terms can be arbitrarily permuted and that vectors within the same rank-1 term can be arbitrarily scaled provided the overall rank-1 term remains the same. {\em The CPD of a tensor is  unique} when it is only subject to these trivial indeterminacies.

In this paper we find sufficient conditions on the matrices $\mathbf A$, $\mathbf B$, and $\mathbf C$ which guarantee that
the CPD of $\mathcal T=[\mathbf A,\mathbf B,\mathbf C]_R$ is partially unique in the following sense:
the third factor matrix of any other CPD of $\mathcal T$ coincides with $\mathbf C$ up to permutation and  scaling of columns.
In such a case we  say that {\em the third factor matrix of $\mathcal T$ is unique}.
We also develop basic material involving $m$-th compound matrices that will be instrumental in Part II for establishing overall CPD uniqueness.

\subsection{Literature overview}\label{Subsection1.3}
The CPD was introduced by F.L. Hitchcock in \cite{Hitchcock}. It has been
rediscovered a number of times and called {\em Canonical Decomposition} (Candecomp) \cite{1970_Carroll_Chang}, {\em Parallel Factor Model} (Parafac) \cite{Harshman1970,1994HarshmanLundy}, and {\em Topographic Components Model}  \cite{1988Topographic}.
Key to many applications are the uniqueness properties of the CPD. Contrary to the matrix case, where there exist (infinitely) many rank-revealing decompositions, CPD may be unique without imposing constraints like orthogonality. Such constraints cannot always be justified from an application point of view. In this sense, CPD may be a meaningful data representation, and actually reveals a unique decomposition of the data in interpretable components. CPD has found many applications in Signal Processing \cite{Cichocki2009},\cite{ComoJ10}, Data Analysis \cite{Kroonenberg2008}, Chemometrics \cite{smilde2004multi}, Psychometrics \cite{1970_Carroll_Chang}, etc. We refer to the overview papers  \cite{KoldaReview,2009Comonetall,Lieven_ISPA} and the references therein for background,  applications and algorithms. We also refer to \cite{Sorber} for a discussion of optimization-based algorithms.
\subsubsection{Early results on uniqueness of the CPD}
In \cite[p. 61]{Harshman1970} the following result concerning the uniqueness of the CPD  is attributed to R. Jennrich.
\begin{theorem}
Let  $\mathcal T=[\mathbf A,\mathbf B,\mathbf C]_R$  and let
\begin{equation}
r_{\mathbf A}=r_{\mathbf B}=r_{\mathbf C}=R.\label{eqrarbrc=r}
\end{equation}
Then $r_{\mathcal T}=R$ and the CPD $\mathcal T=[\mathbf A,\mathbf B,\mathbf C]_R$ is  unique.
\end{theorem}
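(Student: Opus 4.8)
The plan is to establish uniqueness by exploiting the full-rank hypothesis \eqref{eqrarbrc=r}, which forces $I, J, K \geq R$ and makes the matrices $\mathbf A$, $\mathbf B$, $\mathbf C$ left-invertible. First I would show that $r_{\mathcal T} = R$. Since \eqref{eqintro2} already expresses $\mathcal T$ as a sum of $R$ rank-$1$ terms, we have $r_{\mathcal T} \le R$. For the reverse inequality, I would pass to the matricization \eqref{eqT_V}, $\mathbf T = (\mathbf A \odot \mathbf B)\mathbf C^T$; because $\mathbf A$ and $\mathbf B$ each have rank $R$, the Khatri-Rao product $\mathbf A \odot \mathbf B$ has $R$ linearly independent columns (each column $\mathbf a_r \otimes \mathbf b_r$ is nonzero, and linear independence of the $\mathbf a_r \otimes \mathbf b_r$ follows from that of the $\mathbf a_r$), so $r_{\mathbf A \odot \mathbf B} = R$; combined with $r_{\mathbf C} = R$ this gives $r_{\mathbf T} = R$, whence $r_{\mathcal T} \ge r_{\mathbf T} = R$.

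**Next I would set up the uniqueness argument.** Suppose $\mathcal T = [\bar{\mathbf A}, \bar{\mathbf B}, \bar{\mathbf C}]_R$ is any second CPD with the same minimal number $R$ of terms. The key algebraic device is to consider two ``slices'' of the tensor and form a generalized eigenvalue problem. Concretely, writing $\mathcal T$ as a stack of $K$ frontal slices $\mathbf T_k = \mathbf A\,\textup{\text{Diag}}(\mathbf c^{(k)})\,\mathbf B^T$, where $\mathbf c^{(k)}$ is the $k$-th row of $\mathbf C$, I would pick two slices whose diagonal factors are invertible and form the matrix pencil $\mathbf T_1 \mathbf T_2^{+}$ (using a pseudo-inverse, valid since $\mathbf A$, $\mathbf B$ have full column rank). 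Because $\mathbf A$ is left-invertible, this product is similar to $\mathbf A\, \mathbf D\, \mathbf A^{+}$ for a suitable diagonal $\mathbf D$, so its eigenvectors recover the columns of $\mathbf A$ and its eigenvalues recover the ratios of the diagonal entries.

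**The heart of the proof** is to argue that the eigenstructure of this pencil is essentially determined by $\mathcal T$ alone and therefore must be shared by the second decomposition. If the eigenvalues are distinct, the eigenvectors (hence the columns of $\mathbf A$) are determined up to scaling and permutation; one then transfers this conclusion to $\mathbf B$ and $\mathbf C$ by symmetry and by back-substitution into \eqref{eqT_V}, using the left-invertibility of $\mathbf A \odot \mathbf B$ to solve uniquely for $\mathbf C^T$. I expect \emph{the main obstacle} to be the degenerate case in which the constructed pencil has repeated eigenvalues: a generic pair of slices yields distinct eigenvalues, but one must justify that a suitable linear combination of the $K$ slices can always be chosen to break any coincidences, or else handle repeated eigenvalues directly by a perturbation or genericity argument. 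Managing this eigenvalue-separation step cleanly, while keeping the scaling and permutation indeterminacies correctly bookkept, is the delicate part; the rest reduces to routine linear algebra once the full-rank hypotheses are invoked.
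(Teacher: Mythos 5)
Your plan is essentially the classical Jennrich--Harshman simultaneous-diagonalization argument, and it does work under hypothesis \eqref{eqrarbrc=r}; but it is a genuinely different route from the one taken here. The paper does not prove this theorem directly: it is quoted from the literature, and within the paper's framework it is subsumed as a special case of the compound-matrix machinery --- under \eqref{eqrarbrc=r} one has $k_{\mathbf A}=k_{\mathbf B}=R$, so \cond{K}{2} holds for $R\geq 2$, hence \cond{C}{2} and \cond{U}{2} by Lemmas \ref{compoundkhrkrusk} and \ref{C_mU_m}, and uniqueness follows from Theorem \ref{th:1.16}, whose proof runs through Proposition \ref{prmostgeneral} and Kruskal's permutation lemma (Lemma \ref{lemmapermutation}). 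Your spectral argument is more elementary and effectively algorithmic (it reconstructs $\mathbf A$ from the eigenvectors of $\mathbf T_x\mathbf T_y^{+}$), but it relies on all three factor matrices having full column rank and does not extend to the relaxed settings ($r_{\mathbf C}<R$, $k$-rank conditions) that the paper's machinery is built for. Two points in your sketch need explicit care, though neither is a fatal gap: (a) the eigenvalue-separation step does go through --- since $r_{\mathbf C}=R$ no two columns of $\mathbf C$ are proportional, so for generic $\mathbf x,\mathbf y$ the ratios $(\mathbf c_r^T\mathbf x)/(\mathbf c_r^T\mathbf y)$ are finite, nonzero and pairwise distinct, the exceptional set being a finite union of proper algebraic subsets (and individual frontal slices may not suffice, so the generic linear combinations are genuinely needed); (b) to read off the columns of $\bar{\mathbf A}$ as eigenvectors of the \emph{same} pencil you must first verify that $\bar{\mathbf A},\bar{\mathbf B},\bar{\mathbf C}$ also have full column rank (this follows because all three matricizations of $\mathcal T$ have rank $R$), and you must match the permutations obtained for $\mathbf A$ and for $\mathbf B$ via the shared distinct eigenvalues before solving the linear system for $\mathbf C$.
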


Condition \eqref{eqrarbrc=r} may be relaxed as follows.

\begin{theorem} \cite{Harshman1972}
Let  $\mathcal T=[\mathbf A,\mathbf B,\mathbf C]_R$, let
\begin{equation*}
r_{\mathbf A}=r_{\mathbf B}=R\ \ \text{and let any two columns of }\ \mathbf C\ \text{be linearly independent}.
\end{equation*}
Then $r_{\mathcal T}=R$ and the CPD $\mathcal T=[\mathbf A,\mathbf B,\mathbf C]_R$ is  unique.
\end{theorem}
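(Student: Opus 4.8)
The plan is to work with the frontal slices of $\mathcal T$ and to exploit the full column rank of $\mathbf A$ and $\mathbf B$ through the ranks of their linear combinations. For $\mathbf v\in\mathbb F^K$ set $\mathbf T(\mathbf v):=\sum_{k=1}^K v_k\mathbf T_k$, where $\mathbf T_k\in\mathbb F^{I\times J}$ is the $k$-th frontal slice of $\mathcal T$; then $\mathbf T(\mathbf v)=\mathbf A\,\textup{Diag}(\mathbf C^T\mathbf v)\,\mathbf B^T$. Since $\mathbf A$ has full column rank and $\mathbf B^T$ full row rank, $r_{\mathbf A\mathbf X\mathbf B^T}=r_{\mathbf X}$ for every $\mathbf X$, so $r_{\mathbf T(\mathbf v)}=\omega(\mathbf C^T\mathbf v)$. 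Because each column $\mathbf c_r$ is nonzero, each condition $\mathbf c_r^T\mathbf v=0$ cuts out a hyperplane, and over the infinite field $\mathbb F$ a generic $\mathbf v$ avoids all of them; for such $\mathbf v$ one gets $r_{\mathbf T(\mathbf v)}=R$.

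First I would use this to pin down $r_{\mathcal T}$ and the shape of a competing decomposition. Let $\mathcal T=[\bar{\mathbf A},\bar{\mathbf B},\bar{\mathbf C}]_{R'}$ be any CPD, so $R'=r_{\mathcal T}\le R$. Writing $\mathbf T(\mathbf v)=\bar{\mathbf A}\,\textup{Diag}(\bar{\mathbf C}^T\mathbf v)\,\bar{\mathbf B}^T$ and bounding its rank by $\min(r_{\bar{\mathbf A}},\omega(\bar{\mathbf C}^T\mathbf v),r_{\bar{\mathbf B}})\le R'$, the value $R$ attained above forces $R'=R$ and, moreover, $r_{\bar{\mathbf A}}=r_{\bar{\mathbf B}}=R$ and $\omega(\bar{\mathbf C}^T\mathbf v)=R$. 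Thus $r_{\mathcal T}=R$, and $\bar{\mathbf A},\bar{\mathbf B}$ also have full column rank. For generic $\mathbf v$ the diagonal factors are invertible, whence $\textup{range}(\mathbf T(\mathbf v))=\textup{range}(\mathbf A)=\textup{range}(\bar{\mathbf A})$ and, on the row side, $\textup{range}(\mathbf B)=\textup{range}(\bar{\mathbf B})$. Using left inverses $\mathbf P$ of $\mathbf A$ and $\mathbf Q$ of $\mathbf B$ I would pass to the square matrices $\mathbf A':=\mathbf P\bar{\mathbf A}$ and $\mathbf B'':=\mathbf Q\bar{\mathbf B}$, which are $R\times R$ invertible by these range equalities, obtaining the identity $\textup{Diag}(\mathbf C^T\mathbf v)=\mathbf A'\,\textup{Diag}(\bar{\mathbf C}^T\mathbf v)\,\mathbf B''^T$ for every $\mathbf v$.

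The heart of the argument is then a pencil computation. Choosing $\mathbf w$ generic so that $\mathbf C^T\mathbf w$ and $\bar{\mathbf C}^T\mathbf w$ have no zero entries, the matrix $\textup{Diag}(\mathbf C^T\mathbf w)$ is invertible; eliminating $\mathbf B''^T$ between the identities for $\mathbf u$ and $\mathbf w$ gives
\[
\textup{Diag}(\mathbf C^T\mathbf u)\,\textup{Diag}(\mathbf C^T\mathbf w)^{-1}
=\mathbf A'\,\bigl[\textup{Diag}(\bar{\mathbf C}^T\mathbf u)\,\textup{Diag}(\bar{\mathbf C}^T\mathbf w)^{-1}\bigr]\,\mathbf A'^{-1}.
\]
The left side is diagonal with entries $(\mathbf c_r^T\mathbf u)/(\mathbf c_r^T\mathbf w)$, so if these $R$ ratios are pairwise distinct the matrix has simple eigenvalues with eigenspaces $\textup{span}(\mathbf e_r)$; comparing with the right side (a conjugate of a diagonal matrix by $\mathbf A'$) forces every column of $\mathbf A'$ to be a multiple of a standard basis vector, i.e. $\mathbf A'=\mathbf\Pi\mathbf\Lambda$ is a generalized permutation matrix. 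This separation step is where I would spend the effort and where the hypothesis enters: the coincidence of two ratios is the linear condition $\mathbf u^T\bigl[(\mathbf c_s^T\mathbf w)\mathbf c_r-(\mathbf c_r^T\mathbf w)\mathbf c_s\bigr]=0$, whose bracketed vector is nonzero \emph{precisely because} $\mathbf c_r,\mathbf c_s$ are linearly independent. Hence each coincidence is a hyperplane and a generic $\mathbf u$ separates all $\binom{R}{2}$ pairs.

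Finally I would transfer the conclusion back: from $\mathbf A'=\mathbf P\bar{\mathbf A}=\mathbf\Pi\mathbf\Lambda$ together with $\mathbf A\mathbf P\bar{\mathbf A}=\bar{\mathbf A}$ (valid since the columns of $\bar{\mathbf A}$ lie in $\textup{range}(\mathbf A)$) one obtains $\bar{\mathbf A}=\mathbf A\mathbf\Pi\mathbf\Lambda$; the symmetric argument yields $\bar{\mathbf B}=\mathbf B\mathbf\Pi'\mathbf\Lambda'$, and substituting into $(\mathbf A\odot\mathbf B)\mathbf C^T=(\bar{\mathbf A}\odot\bar{\mathbf B})\bar{\mathbf C}^T$ forces the two permutations to coincide and recovers $\bar{\mathbf C}$ as $\mathbf C$ with the matching column permutation and reciprocal scaling. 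I expect the eigenvalue-separation step to be the main obstacle: producing simple eigenvalues in the pencil is impossible for general $\mathbf C$, and it is exactly the pairwise independence of the columns of $\mathbf C$ that rescues it, while everything else is bookkeeping with ranks and ranges.
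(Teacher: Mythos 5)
The paper does not prove this theorem at all: it is quoted in the literature overview and attributed to Harshman (1972), so there is no in-paper proof to compare against. Your argument is the classical generalized-Jennrich simultaneous-diagonalization proof, and it is correct: the rank identity $r_{\mathbf T(\mathbf v)}=\omega(\mathbf C^T\mathbf v)$ pins down $r_{\mathcal T}=R$ and forces any competing CPD to have $r_{\bar{\mathbf A}}=r_{\bar{\mathbf B}}=R$ with matching column spaces; the pencil $\textup{Diag}(\mathbf C^T\mathbf u)\,\textup{Diag}(\mathbf C^T\mathbf w)^{-1}$ then has simple eigenvalues for generic $\mathbf u,\mathbf w$ precisely because $k_{\mathbf C}\ge 2$ makes each coincidence locus a proper hyperplane (this uses that $\mathbb F$ is infinite, which holds here since $\mathbb F\in\{\mathbb R,\mathbb C\}$); and the eigenvector comparison yields the generalized-permutation structure of the change of basis. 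This is essentially the route of Harshman/Jennrich and of the later Leurgans--Ross--Abel treatment, and is quite different in flavor from the machinery the paper develops for its own results (Kruskal's permutation lemma and compound matrices), which would give an alternative, less elementary derivation via Theorem \ref{th:1.16}. I see no gap; the only steps left implicit (that the two permutations for $\bar{\mathbf A}$ and $\bar{\mathbf B}$ coincide, and the recovery of $\bar{\mathbf C}$) are indeed routine once $\textup{Diag}(\mathbf C^T\mathbf v)=\mathbf A'\,\textup{Diag}(\bar{\mathbf C}^T\mathbf v)\,\mathbf B''^T$ is known for all $\mathbf v$ with $\mathbf A',\mathbf B''$ generalized permutations.
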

\subsubsection{Kruskal's conditions}
A further relaxed result is due to J. Kruskal. To present Kruskal's theorem we recall the definition of $k$-rank (``$k$'' refers to ``Kruskal'').

\begin{definition}\label{defkrank}
The $k$-rank  of a matrix $\mathbf A$ is the largest number $k_{\mathbf A}$ such that
every subset of $k_{\mathbf A}$ columns of the matrix $\mathbf A$ is  linearly independent.
\end{definition}

Obviously, $k_{\mathbf A}\leq r_{\mathbf A}$.
Note that the notion of the $k$-rank is closely related to the  notions of
girth, spark, and $k$-stability \cite[Lemma 5.2, p. 317]{LimCommon2010} and  references therein.
The famous Kruskal theorem states the following.

\begin{theorem}\cite{Kruskal1977}\label{theoremKruskal}
Let $\mathcal T=[\mathbf A,\mathbf B,\mathbf C]_R$  and let
\begin{equation}
k_{\mathbf A}+k_{\mathbf B}+k_{\mathbf C}\geq 2R+2.
 \label{Kruskal}
 \end{equation}
Then $r_{\mathcal T}=R$ and the CPD of  $\mathcal T=[\mathbf A,\mathbf B,\mathbf C]_R$ is  unique.
 \end{theorem}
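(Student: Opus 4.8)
The plan is to prove uniqueness by the standard two-part strategy: an analytic rank-counting step that exploits condition \eqref{Kruskal}, feeding into a purely combinatorial step, namely Kruskal's permutation lemma. Suppose $\mathcal T=[\mathbf A,\mathbf B,\mathbf C]_R=[\bar{\mathbf A},\bar{\mathbf B},\bar{\mathbf C}]_{\bar R}$ is any competing decomposition with $\bar R\le R$ terms (taking $\bar R=r_{\mathcal T}$ for the rank claim and $\bar R=R$ for the uniqueness claim). The basic device is to contract the third mode against an arbitrary $\mathbf x\in\mathbb F^K$: reshaping $\mathbf T\mathbf x$ into an $I\times J$ matrix gives
\[ \mathbf A\,\mathrm{Diag}(\mathbf C^T\mathbf x)\,\mathbf B^T \;=\; \bar{\mathbf A}\,\mathrm{Diag}(\bar{\mathbf C}^T\mathbf x)\,\bar{\mathbf B}^T \;=:\; \mathbf M(\mathbf x). \]
The whole argument rests on reading off the rank of $\mathbf M(\mathbf x)$ from each side as $\mathbf x$ varies and then invoking the permutation lemma to conclude that $\mathbf C$ and $\bar{\mathbf C}$ agree up to column permutation and scaling; by the symmetry of \eqref{Kruskal} in the three factors the same will hold for $\mathbf A$ and $\mathbf B$.

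First I would prove a two-sided rank estimate for $\mathbf M(\mathbf x)$. Writing $w=\omega(\mathbf C^T\mathbf x)$, the left-hand side is a product of the submatrices of $\mathbf A$ and $\mathbf B$ formed by the $w$ columns on which $\mathbf C^T\mathbf x$ is nonzero; since any $\le k_{\mathbf A}$ columns of $\mathbf A$ are independent, those submatrices have ranks $\ge\min(k_{\mathbf A},w)$ and $\ge\min(k_{\mathbf B},w)$, and Sylvester's rank inequality yields $r_{\mathbf M(\mathbf x)}\ge\min(k_{\mathbf A},w)+\min(k_{\mathbf B},w)-w$. The right-hand side is a sum of $\bar w:=\omega(\bar{\mathbf C}^T\mathbf x)$ rank-$1$ matrices, so $r_{\mathbf M(\mathbf x)}\le\bar w$. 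Combining,
\[ \min(k_{\mathbf A},w)+\min(k_{\mathbf B},w)-w\ \le\ \bar w\qquad\text{for every }\mathbf x. \]

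Next I would verify the hypothesis of the permutation lemma, namely that $\bar w\le R-k_{\mathbf C}+1$ forces $w\le\bar w$. If $w\le\min(k_{\mathbf A},k_{\mathbf B})$ the displayed inequality already reads $w\le\bar w$, so it suffices to exclude $w>\min(k_{\mathbf A},k_{\mathbf B})$. Here \eqref{Kruskal} enters decisively: if $w\ge k_{\mathbf A}$ and $w\ge k_{\mathbf B}$, the bound becomes $k_{\mathbf A}+k_{\mathbf B}+k_{\mathbf C}\le R+1+w\le 2R+1$, contradicting \eqref{Kruskal}; and if, say, $w\ge k_{\mathbf A}$ but $w<k_{\mathbf B}$, the bound reduces to $k_{\mathbf A}\le\bar w\le R-k_{\mathbf C}+1$, whence $k_{\mathbf B}\ge(2R+2)-(R+1)=R+1>R$, again impossible. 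Thus $w\le\bar w$ on the relevant range, the permutation lemma applies, and $\bar{\mathbf C}=\mathbf C\boldsymbol{\Pi}\boldsymbol{\Lambda}$ with $\boldsymbol{\Pi}$ a permutation matrix and $\boldsymbol{\Lambda}$ nonsingular diagonal. Running the same argument in the other two modes, and then checking that the three permutations coincide and the diagonal scalings multiply to the identity on matched terms (forced because the matched rank-$1$ terms must reproduce $\mathcal T$), yields uniqueness. Finally, \eqref{Kruskal} together with $k_{\mathbf A},k_{\mathbf B}\le R$ gives $k_{\mathbf C}\ge 2$, so no two columns of $\mathbf C$ are proportional; hence the column correspondence produced by the lemma is a bijection, a shortest decomposition must have $R$ terms, and $r_{\mathcal T}=R$.

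The hard part will be the permutation lemma itself: passing from the pointwise sparsity domination ``$\omega(\bar{\mathbf C}^T\mathbf x)\le R-k_{\mathbf C}+1\Rightarrow\omega(\mathbf C^T\mathbf x)\le\omega(\bar{\mathbf C}^T\mathbf x)$, for all $\mathbf x$'' to the rigid conclusion that $\bar{\mathbf C}$ is a column permutation and rescaling of $\mathbf C$. This is the genuinely combinatorial core of Kruskal's theorem; it does not follow from the rank count but requires an induction over nested families of linearly (in)dependent column subsets, tracking how the support of $\mathbf x^T\mathbf C$ can sit inside that of $\mathbf x^T\bar{\mathbf C}$. The remaining delicate point is the reconciliation step, ensuring that the permutations obtained separately for $\mathbf A$, $\mathbf B$, $\mathbf C$ are one and the same and that no scaling ambiguity survives beyond the admissible trivial one.
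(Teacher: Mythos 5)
The paper does not actually prove Theorem \ref{theoremKruskal}: it is quoted from \cite{Kruskal1977}, and \S\ref{subsect:overallm} only describes the architecture of Kruskal's proof, namely the three steps in \eqref{eq5.1} (permutation lemma; condition \eqref{Kruskal} implies each factor matrix is individually unique; individual uniqueness implies overall uniqueness). Your proposal follows exactly this architecture, and your middle step is sound: the Sylvester bound $r_{\mathbf M(\mathbf x)}\ge\min(k_{\mathbf A},w)+\min(k_{\mathbf B},w)-w$ together with the case analysis on $w$ versus $k_{\mathbf A},k_{\mathbf B}$ is precisely the mechanism the paper formalizes as $\textup{(K{\scriptsize m})}\Rightarrow\textup{(H{\scriptsize m})}\Rightarrow\textup{(U{\scriptsize m})}$ (Lemmas \ref{KmHm} and \ref{HmUm}) feeding into Proposition \ref{prmostgeneral}. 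One small repair is needed there: Lemma \ref{lemmapermutation} asks for the domination $\omega(\mathbf C^T\mathbf x)\le\omega(\bar{\mathbf C}^T\mathbf x)$ on the range $\omega(\bar{\mathbf C}^T\mathbf x)\le\bar R-r_{\bar{\mathbf C}}+1$, whereas you verify it on the range $\omega(\bar{\mathbf C}^T\mathbf x)\le R-k_{\mathbf C}+1$; to see that the former range is contained in the latter you must first show $r_{\bar{\mathbf C}}\ge r_{\mathbf C}\ge k_{\mathbf C}$, which follows because $k_{\mathbf A}+k_{\mathbf B}\ge R+2$ makes $\mathbf A\odot\mathbf B$ have full column rank (Corollary \ref{rank Khatri-Rao lemma} and the argument \eqref{star}).

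The genuine gap is the final step. Having obtained $\bar{\mathbf A}=\mathbf A\mathbf\Pi_1\mathbf\Lambda_1$, $\bar{\mathbf B}=\mathbf B\mathbf\Pi_2\mathbf\Lambda_2$, $\bar{\mathbf C}=\mathbf C\mathbf\Pi_3\mathbf\Lambda_3$ separately, you dismiss the identification $\mathbf\Pi_1=\mathbf\Pi_2=\mathbf\Pi_3$ and $\mathbf\Lambda_1\mathbf\Lambda_2\mathbf\Lambda_3=\mathbf I_R$ as ``forced because the matched rank-$1$ terms must reproduce $\mathcal T$.'' That is not automatic: normalizing so that $\bar{\mathbf C}=\mathbf C$, the equality $(\mathbf A\odot\mathbf B)\mathbf C^T=(\bar{\mathbf A}\odot\bar{\mathbf B})\mathbf C^T$ only yields $\mathbf A\odot\mathbf B=\bar{\mathbf A}\odot\bar{\mathbf B}$ when $\mathbf C$ has full column rank, which \eqref{Kruskal} does not guarantee. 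This is exactly the second implication in \eqref{eq5.1}, which the paper explicitly flags as ``not trivial'' and which occupies the third main step of Kruskal's and Stegeman's proofs (and, in generalized form, Part II \cite{partII}). As written, your argument establishes uniqueness of each factor matrix separately but not uniqueness of the overall CPD, so the theorem is not yet proved.
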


Kruskal's original proof was made more accessible in \cite{Stegeman2007} and was simplified in \cite[Theorem 12.5.3.1, p. 306]{Landsberg}. In \cite{Rhodes2010} an other proof of Theorem \ref{theoremKruskal} is given.

Before Kruskal arrived at Theorem \ref{theoremKruskal} he obtained results about uniqueness of one factor matrix
\cite[Theorem 3a--3d, p. 115--116]{Kruskal1977}.
These results were flawed. Here we present their corrected versions.

\begin{theorem}\label{theorem_uni-mode}\cite[Theorem 2.3]{GuoMironBrieStegeman} (for original formulation see \cite[Theorems 3a,b]{Kruskal1977})
Let  $\mathcal T=[\mathbf A,\mathbf B,\mathbf C]_R$ and suppose
\begin{equation}\label{unique_one_new_paper}
\begin{cases}
k_{\mathbf C}\geq 1,\\
r_{\mathbf C}+\min(k_{\mathbf A}, k_{\mathbf B})\geq R+2,\\
r_{\mathbf C}+k_{\mathbf A}+k_{\mathbf B}+\max(r_{\mathbf A}-k_{\mathbf A}, r_{\mathbf B}-k_{\mathbf B})\geq 2R+2.
\end{cases}
\end{equation}
Then $r_{\mathcal T}=R$ and  the third factor matrix of $\mathcal T$ is unique.
\end{theorem}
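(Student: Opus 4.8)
The plan is to reduce everything to Kruskal's permutation lemma, applied to the given third factor matrix $\mathbf C$ and to the third factor matrix $\bar{\mathbf C}$ of an arbitrary competing decomposition $\mathcal T=[\bar{\mathbf A},\bar{\mathbf B},\bar{\mathbf C}]_{\bar R}$ with $\bar R\le R$ (pad $\bar{\mathbf C}$ with zero columns so that it also has $R$ columns). Since $k_{\mathbf C}\ge1$, the matrix $\mathbf C$ has no zero columns, and the permutation lemma yields $\bar{\mathbf C}=\mathbf C\mathbf\Pi\mathbf\Lambda$ for a permutation $\mathbf\Pi$ and a nonsingular diagonal $\mathbf\Lambda$ --- which is exactly uniqueness of the third factor matrix --- provided one verifies its combinatorial hypothesis: for every $\mathbf x$ with $\omega(\bar{\mathbf C}^T\mathbf x)\le R-r_{\mathbf C}+1$ one has $\omega(\mathbf C^T\mathbf x)\le\omega(\bar{\mathbf C}^T\mathbf x)$. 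The whole argument thus reduces to this single inequality, and the column count then forces $\bar R=R$ (a padded zero column of $\bar{\mathbf C}$ would have to map to a zero column of $\mathbf C$), whence $r_{\mathcal T}=R$.

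The engine for verifying the hypothesis is the behaviour of linear combinations of frontal slices. Writing both decompositions slice-wise, for every $\mathbf x\in\mathbb F^K$,
\[
\mathbf M(\mathbf x):=\mathbf A\,\operatorname{Diag}(\mathbf C^T\mathbf x)\,\mathbf B^T=\bar{\mathbf A}\,\operatorname{Diag}(\bar{\mathbf C}^T\mathbf x)\,\bar{\mathbf B}^T .
\]
Set $\omega:=\omega(\mathbf C^T\mathbf x)$, $\bar\omega:=\omega(\bar{\mathbf C}^T\mathbf x)$, and let $S$ be the support of $\mathbf C^T\mathbf x$. From the right-hand factorization, $\mathbf M(\mathbf x)$ factors through a $\bar\omega$-dimensional space, so $r_{\mathbf M(\mathbf x)}\le\bar\omega$. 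From the left-hand factorization, $\mathbf M(\mathbf x)=\mathbf A_S\mathbf D_S\mathbf B_S^T$ with $\mathbf D_S$ nonsingular, so a product-rank inequality gives $r_{\mathbf M(\mathbf x)}\ge r_{\mathbf A_S}+r_{\mathbf B_S}-\omega$. The two lower bounds on the rank of an $\omega$-column submatrix that I would combine are the $k$-rank bound $r_{\mathbf A_S}\ge\min(\omega,k_{\mathbf A})$ and the deletion bound $r_{\mathbf A_S}\ge r_{\mathbf A}-(R-\omega)$ (removing $R-\omega$ columns lowers the rank by at most $R-\omega$), and likewise for $\mathbf B$. This gives $\bar\omega\ge r_{\mathbf A_S}+r_{\mathbf B_S}-\omega$ with the freedom to choose, for each factor, whichever of the two bounds is stronger.

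It then remains to check that the three hypotheses force $\omega\le\bar\omega$ whenever $\bar\omega\le R-r_{\mathbf C}+1$, which I would do by a short case split on $\omega$. If $\omega\le\min(k_{\mathbf A},k_{\mathbf B})$ the $k$-rank bounds give $\bar\omega\ge\omega$ directly. If $\omega>\min(k_{\mathbf A},k_{\mathbf B})$, I claim the assumption $\bar\omega\le R-r_{\mathbf C}+1$ is contradicted: pairing one $k$-rank bound with $r_{\mathbf C}+\min(k_{\mathbf A},k_{\mathbf B})\ge R+2$ handles the sub-regime $\omega\le\max(k_{\mathbf A},k_{\mathbf B})$ and yields $\bar\omega\ge R-r_{\mathbf C}+2$, while for $\omega>\max(k_{\mathbf A},k_{\mathbf B})$ I would pair a deletion bound for one factor with a $k$-rank bound for the other to get $\bar\omega\ge k_{\mathbf A}+k_{\mathbf B}+\max(r_{\mathbf A}-k_{\mathbf A},r_{\mathbf B}-k_{\mathbf B})-R$, and the third hypothesis again forces $\bar\omega\ge R-r_{\mathbf C}+2$. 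Either way $\bar\omega>R-r_{\mathbf C}+1$, so this regime cannot occur and the permutation-lemma hypothesis holds.

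The main obstacle I anticipate is precisely the regime $\omega>\min(k_{\mathbf A},k_{\mathbf B})$, where the naive estimate $\min(\omega,k_{\mathbf A})+\min(\omega,k_{\mathbf B})-\omega$ decays and becomes useless for large $\omega$; obtaining a usable bound there is what forces the deletion bound $r_{\mathbf A_S}\ge r_{\mathbf A}-(R-\omega)$ into the proof, and the quantity $\max(r_{\mathbf A}-k_{\mathbf A},r_{\mathbf B}-k_{\mathbf B})$ in the third hypothesis is exactly the bookkeeping of which factor one spends the deletion bound on. A secondary point of care is the asymmetry of the setup: nothing is assumed about $\bar{\mathbf A},\bar{\mathbf B}$, so the rank estimates may be run in one direction only, and one must invoke the permutation lemma with $\mathbf C$ and $\bar{\mathbf C}$ in the correct roles and with the zero-padding handled so that $\bar R=R$, and hence $r_{\mathcal T}=R$, follows.
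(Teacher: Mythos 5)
Your overall strategy is sound and is in essence the route the paper itself takes: reduce to Kruskal's permutation lemma (Lemma \ref{lemmapermutation}) applied to $\mathbf C$ and $\bar{\mathbf C}$, and verify its hypothesis by bounding $r_{\mathbf A\,\textup{Diag}(\mathbf C^T\mathbf x)\,\mathbf B^T}$ from below via the Sylvester rank inequality together with the $k$-rank bound $r_{\mathbf A_S}\geq\min(\omega,k_{\mathbf A})$ and the deletion bound $r_{\mathbf A_S}\geq r_{\mathbf A}-(R-\omega)$. Your three-case split on $\omega$ relative to $\min(k_{\mathbf A},k_{\mathbf B})$ and $\max(k_{\mathbf A},k_{\mathbf B})$ is exactly the content of Lemmas \ref{KmHm} and \ref{HmUm} (the chain \textup{(K{\scriptsize m})}$\Rightarrow$\textup{(H{\scriptsize m})}$\Rightarrow$\textup{(U{\scriptsize m})} with $m=R-r_{\mathbf C}+2$, after Remark \ref{remark_unimode} rewrites \eqref{unique_one_new_paper} in the form \eqref{eqlongform}); each of your three cases checks out arithmetically. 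The only genuine difference is that you bypass the compound-matrix packaging: the paper converts $r_{\mathbf M(\mathbf x)}\leq k-1$ into the vanishing of $(\mathcal C_k(\mathbf A)\odot\mathcal C_k(\mathbf B))\hatdSmR{k}{R}$ and invokes \textup{(U{\scriptsize k})} inside Proposition \ref{prmostgeneral}, whereas you work with the rank of $\mathbf M(\mathbf x)$ directly. That is a legitimate streamlining for this theorem; what the compound-matrix formulation buys the paper is reusability (conditions \textup{(C{\scriptsize m})}, \textup{(U{\scriptsize m})}, \textup{(W{\scriptsize m})} and the overall-uniqueness results of Part II), not extra strength here.

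There is, however, one concrete gap. The hypothesis of the permutation lemma quantifies over all $\mathbf x$ with $\omega(\bar{\mathbf C}^T\mathbf x)\leq \bar R-r_{\bar{\mathbf C}}+1$, not $\leq R-r_{\mathbf C}+1$ as you state; your case analysis only rules out the regimes with $\bar\omega\geq m=R-r_{\mathbf C}+2$, so it covers exactly the vectors with $\bar\omega\leq R-r_{\mathbf C}+1$ and says nothing about a hypothetical $\mathbf x$ with $R-r_{\mathbf C}+1<\omega(\bar{\mathbf C}^T\mathbf x)\leq\bar R-r_{\bar{\mathbf C}}+1$. (Zero-padding $\bar{\mathbf C}$ makes this worse, since it raises the threshold to $R-r_{\bar{\mathbf C}}+1$.) To close the gap you must show $\bar R-r_{\bar{\mathbf C}}\leq R-r_{\mathbf C}$, for which it suffices to prove $r_{\bar{\mathbf C}}\geq r_{\mathbf C}$. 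The paper does this in \eqref{star}: since $\mathbf A\odot\mathbf B$ has full column rank, $r_{\mathbf C}=r_{(\mathbf A\odot\mathbf B)\mathbf C^T}=r_{(\bar{\mathbf A}\odot\bar{\mathbf B})\bar{\mathbf C}^T}\leq r_{\bar{\mathbf C}}$. The full column rank of $\mathbf A\odot\mathbf B$ is itself a consequence of \eqref{unique_one_new_paper} — e.g.\ the rewriting \eqref{eqlongform} gives $r_{\mathbf A}+k_{\mathbf B}\geq R+m\geq R+1$ and $k_{\mathbf A}\geq m\geq 1$ (up to swapping $\mathbf A$ and $\mathbf B$), so Lemma \ref{lemmaMiron1} applies — but this step is absent from your proposal and must be added before the permutation lemma can be invoked.
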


Let the matrices $\mathbf A$ and $\mathbf B$ have $R$ columns. Let  $\tilde{\mathbf A}$ be any set of columns of $\mathbf A$, let $\tilde{\mathbf B}$ be the corresponding set of columns of $\mathbf B$.
We will say that condition \cond{H}{m}
holds for the matrices $\mathbf A$ and $\mathbf B$ if
\begin{equation}
H(\delta):=\min\limits_{card(\tilde{\mathbf A})=\delta}\left[r_{\tilde{\mathbf A}}+r_{\tilde{\mathbf B}}-\delta\right]\geq\min(\delta,m)\quad\text{for}\quad
\delta=1,2,\dots,R.\tag{H{\scriptsize m}}
\end{equation}
\begin{theorem} (see \S \ref{Section4}, for original formulation see \cite[Theorems 3d]{Kruskal1977})\label{theoremKruskalnew3}
Let $\mathcal T=[\mathbf A, \mathbf B, \mathbf C]_R$ and $m:=R-r_{\mathbf C}+2$. Assume that
\begin{romannum}
\item $k_{\mathbf C}\geq 1$;
\item
\cond{H}{m} holds for $\mathbf A$ and $\mathbf B$.
\end{romannum}
Then $r_{\mathcal T}=R$ and the third factor matrix of  $\mathcal T$  is unique.
\end{theorem}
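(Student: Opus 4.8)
The plan is to establish uniqueness of the third factor through Kruskal's permutation lemma, using condition \cond{H}{m} only to lower-bound the ranks of the mode-3 contractions of $\mathcal T$. For $\mathbf f\in\mathbb F^K$ write the $I\times J$ matrix
\[
\mathbf M(\mathbf f):=\mathbf A\,\textup{\text{Diag}}(\mathbf C^T\mathbf f)\,\mathbf B^T=\sum_{r=1}^R(\mathbf c_r^T\mathbf f)\,\mathbf a_r\mathbf b_r^T ,
\]
which depends only on $\mathcal T$ and not on the chosen PD. If $\mathcal T=[\mathbf A',\mathbf B',\mathbf C']_{R'}$ is a competing PD with $R'\le R$, then also $\mathbf M(\mathbf f)=\mathbf A'\,\textup{\text{Diag}}(\mathbf C'^T\mathbf f)\,\mathbf B'^T$, whence the easy upper bound $r_{\mathbf M(\mathbf f)}\le\omega(\mathbf C'^T\mathbf f)$, as a sum of $\omega(\mathbf C'^T\mathbf f)$ rank-one terms.

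First I would prove the key rank lemma: \cond{H}{m} implies $r_{\mathbf M(\mathbf f)}\ge\min(\omega(\mathbf C^T\mathbf f),m)$ for every $\mathbf f$. Let $S$ be the support of $\mathbf C^T\mathbf f$ and $\delta:=\omega(\mathbf C^T\mathbf f)=card(S)$; the case $\delta=0$ is trivial. Discarding the vanishing terms gives $\mathbf M(\mathbf f)=\tilde{\mathbf A}\,\mathbf D\,\tilde{\mathbf B}^T$, where $\tilde{\mathbf A},\tilde{\mathbf B}$ are the $\delta$-column submatrices of $\mathbf A,\mathbf B$ indexed by $S$ and $\mathbf D$ is the $\delta\times\delta$ nonsingular diagonal matrix carrying the nonzero entries of $\mathbf C^T\mathbf f$. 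By Sylvester's rank inequality (inner dimension $\delta$) together with the invertibility of $\mathbf D$,
\[
r_{\mathbf M(\mathbf f)}\ge r_{\tilde{\mathbf A}}+r_{\tilde{\mathbf B}}-\delta\ge H(\delta)\ge\min(\delta,m),
\]
the middle inequality being the very definition of $H(\delta)$ as a minimum over $\delta$-subsets and the last being \cond{H}{m}.

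Next I would feed this into the permutation lemma. Combining the two bounds, for any $\mathbf f$ with $\omega(\mathbf C'^T\mathbf f)\le m-1=R-r_{\mathbf C}+1$ we obtain $\min(\omega(\mathbf C^T\mathbf f),m)\le r_{\mathbf M(\mathbf f)}\le\omega(\mathbf C'^T\mathbf f)\le m-1<m$, which forces $\omega(\mathbf C^T\mathbf f)\le\omega(\mathbf C'^T\mathbf f)$. This is exactly the hypothesis of Kruskal's permutation lemma for the pair $(\mathbf C,\mathbf C')$ at threshold $R-r_{\mathbf C}+1$, the assumption $k_{\mathbf C}\ge1$ supplying the remaining requirement that $\mathbf C$ have no zero column. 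The lemma then yields $R'=R$ together with $\mathbf C'=\mathbf C\mathbf \Pi\mathbf \Lambda$ for a permutation matrix $\mathbf \Pi$ and a nonsingular diagonal $\mathbf \Lambda$; applying this to a minimal PD delivers simultaneously $r_{\mathcal T}=R$ and uniqueness of the third factor.

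The routine parts are the rank lemma (a one-line consequence of Sylvester's inequality) and the verification of the permutation-lemma hypothesis. The main obstacle is the permutation lemma itself, and in particular extracting the minimality statement $R'=R$ from it: one must guarantee that a competing PD with $R'<R$ terms cannot satisfy the sparsity-domination hypothesis, i.e. that the induced matching of the columns of $\mathbf C'$ onto those of $\mathbf C$ is forced to be a bijection. I would either invoke the Stegeman--Sidiropoulos form of the lemma, which admits $R'\le R$ and returns $R'=R$, or else first exclude $R'<R$ by padding $\mathbf C'$ with zero columns and using $k_{\mathbf C}\ge1$ to contradict the appearance of a zero column in $\mathbf C\mathbf \Pi\mathbf \Lambda$. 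This route, incidentally, bypasses the compound-matrix machinery altogether for this particular statement.
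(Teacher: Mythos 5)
Your route is essentially Kruskal's original one and is genuinely more direct than the paper's. The paper obtains this theorem by combining Lemma \ref{HmUm} (\cond{H}{m} implies \cond{U}{m}) with Proposition \ref{prmostgeneral}, whose proof re-extracts rank information from $\mathcal C_k(\mathbf A)\odot\mathcal C_k(\mathbf B)$ for every $k\le m$ via the cascade of Lemma \ref{compoundumuk}. Your ``key rank lemma'' $r_{\mathbf M(\mathbf f)}\ge\min(\omega(\mathbf C^T\mathbf f),m)$ is exactly the Sylvester computation inside Lemma \ref{HmUm}, but by keeping it at the level of the slice mixture $\mathbf M(\mathbf f)$ you handle all values of $\omega(\mathbf C'^T\mathbf f)$ at once and bypass both the compound-matrix machinery and the implication $\cond{U}{m}\Rightarrow\cond{U}{k}$. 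That part of the proposal is correct.

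There is, however, one genuine gap in the appeal to the permutation lemma. Lemma \ref{lemmapermutation} requires the domination $\omega(\mathbf C^T\mathbf x)\le\omega(\mathbf C'^T\mathbf x)$ for \emph{every} $\mathbf x$ with $\omega(\mathbf C'^T\mathbf x)\le R'-r_{\mathbf C'}+1$: the threshold is computed from the competing matrix $\mathbf C'$ (the paper's $\bar{\mathbf C}$), not from $\mathbf C$. You verify the domination only for $\omega(\mathbf C'^T\mathbf f)\le m-1=R-r_{\mathbf C}+1$, so your verification covers the required range only once one knows $R'-r_{\mathbf C'}\le R-r_{\mathbf C}$, i.e.\ (given $R'\le R$) that $r_{\mathbf C'}\ge r_{\mathbf C}$. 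This is precisely the step the paper supplies in \eqref{star}, using the fact that $\mathbf A\odot\mathbf B$ has full column rank (a consequence of \cond{U}{1}, hence of \cond{H}{m}). The omission is repairable inside your own framework without compound matrices: if $r_{\mathbf C'}<r_{\mathbf C}$, a dimension count gives $\mathbf f\in\ker(\mathbf C'^T)\setminus\ker(\mathbf C^T)$, for which $\mathbf M(\mathbf f)=\mzero$ while your rank lemma forces $r_{\mathbf M(\mathbf f)}\ge\min(\omega(\mathbf C^T\mathbf f),m)\ge1$ (note $m\ge2$), a contradiction. You must add this step (or the full-column-rank argument) explicitly; as written, the hypothesis of the permutation lemma has not been verified on the set of vectors the lemma actually quantifies over.
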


Kruskal also obtained  results about overall uniqueness that are
more general than Theorem \ref{theoremKruskal}. These results will be discussed in Part II \cite{partII}.

\subsubsection{Uniqueness of the CPD when one factor matrix has full column rank}
We say that a $K\times R$ matrix {\em has  full column rank } if its column rank is $R$, which implies $K \geq R$.

Let us assume that $r_{\mathbf C}=R$.
The following result concerning uniqueness of the CPD was obtained by  T. Jiang and N. Sidiropoulos in \cite{JiangSid2004}.
We reformulate  the result in terms  of the Khatri-Rao product of the second compound  matrices of $\mathbf A$ and $\mathbf B$. The $k$-th compound matrix of
an
$I\times R$ matrix $\mathbf A$ (denoted by $\mathcal C_k(\mathbf A)$) is the $C^k_I\times C^k_R$ matrix containing the determinants of all $k\times k$ submatrices of $\mathbf A$, arranged with the submatrix index sets in lexicographic order (see Definition \ref{defcompound} and Example \ref{Example2.2}).
\begin{theorem}\label{th:1.16}\cite[Condition A, p. 2628, Condition B and eqs. (16) and (17), p. 2630]{JiangSid2004}
Let $\mathbf A\in \mathbb F^{I\times R}$,  $\mathbf B\in \mathbb F^{J\times R}$,
$\mathbf C\in \mathbb F^{K\times R}$ and $r_{\mathbf C}=R$. Then the following statements are equivalent:
\begin{romannum}
\item if $\mathbf d\in\mathbb F^R$ is such that  $r_{\mathbf A \textup{\text{Diag}}(\mathbf d) \mathbf B^T}\leq 1$,  then $\omega(\mathbf d)\leq 1$;
\item if  $\mathbf d\in\mathbb F^R$ is such that
$$
(\mathcal C_2(\mathbf A)\odot \mathcal C_2(\mathbf B))\left[\begin{matrix}d_1d_2&d_1d_3&\dots&d_1d_R&d_2d_3&\dots&d_{R-1}d_R\end{matrix}\right]^T=
\vzero,
$$
  then $\omega(\mathbf d)\leq 1$;\hfill\cond{U}{2}
\item $r_{\mathcal T}=R$ and the CPD of $\mathcal T=[\mathbf A,  \mathbf B,  \mathbf C]_R$ is unique.
\end{romannum}
\end{theorem}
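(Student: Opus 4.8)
The plan is to establish $(\mathrm{i})\Leftrightarrow(\mathrm{ii})$ by second-compound-matrix algebra and $(\mathrm{i})\Leftrightarrow(\mathrm{iii})$ as the real uniqueness content, tying both to the \emph{slice space} $\mathcal S:=\{\mathbf A\,\textup{\text{Diag}}(\mathbf d)\,\mathbf B^T:\mathbf d\in\mathbb F^R\}$. Because $r_{\mathbf C}=R$, the rows of $\mathbf C$ span all of $\mathbb F^R$, so $\mathcal S$ is exactly the span of the frontal slices of $\mathcal T$ and is therefore an invariant of the tensor; condition $(\mathrm{i})$ says precisely that the rank-$1$ elements of $\mathcal S$ are, up to scaling, exactly $\mathbf a_1\mathbf b_1^T,\dots,\mathbf a_R\mathbf b_R^T$. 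This geometric reading of $(\mathrm{i})$ is what drives the uniqueness argument.

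For $(\mathrm{i})\Leftrightarrow(\mathrm{ii})$ I would use two facts about compound matrices: $r_{\mathbf M}\le 1$ iff all $2\times2$ minors of $\mathbf M$ vanish, i.e.\ $\mathcal C_2(\mathbf M)=\mzero$; and the Cauchy--Binet multiplicativity $\mathcal C_2(\mathbf X\mathbf Y)=\mathcal C_2(\mathbf X)\mathcal C_2(\mathbf Y)$. Applying multiplicativity to $\mathbf M=\mathbf A\,\textup{\text{Diag}}(\mathbf d)\,\mathbf B^T$ together with $\mathcal C_2(\textup{\text{Diag}}(\mathbf d))=\textup{\text{Diag}}(\mathbf p)$, where $\mathbf p:=[d_1d_2\ \cdots\ d_{R-1}d_R]^T$, gives $\mathcal C_2(\mathbf M)=\mathcal C_2(\mathbf A)\,\textup{\text{Diag}}(\mathbf p)\,\mathcal C_2(\mathbf B)^T$. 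Vectorizing this in the same Khatri--Rao convention as $\mathbf T=(\mathbf A\odot\mathbf B)\mathbf C^T$ turns it into $(\mathcal C_2(\mathbf A)\odot\mathcal C_2(\mathbf B))\mathbf p$, so the hypotheses of $(\mathrm{i})$ and $(\mathrm{ii})$ are literally the same condition on $\mathbf d$ and the two implications coincide. The only care needed is to align the lexicographic ordering of index sets with the column ordering of the Khatri--Rao product.

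For $(\mathrm{i})\Rightarrow(\mathrm{iii})$ I would first note that $(\mathrm{i})$ forces $\mathbf A\odot\mathbf B$ to have full column rank: any $\mathbf d$ with $(\mathbf A\odot\mathbf B)\mathbf d=\vzero$ satisfies $\mathbf A\,\textup{\text{Diag}}(\mathbf d)\,\mathbf B^T=\mzero$, hence $\omega(\mathbf d)\le1$, and then $d_r\mathbf a_r\mathbf b_r^T=\mzero$ with $\mathbf a_r,\mathbf b_r\ne\vzero$ forces $\mathbf d=\vzero$. Consequently $r_{\mathbf T}=R$, and since $r_{\mathbf T}\le r_{\mathcal T}\le R$ we obtain $r_{\mathcal T}=R$. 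Given any second CPD $[\bar{\mathbf A},\bar{\mathbf B},\bar{\mathbf C}]_R$, the rank count forces $\bar{\mathbf C}$ and $\bar{\mathbf A}\odot\bar{\mathbf B}$ to have full column rank with $\textup{range}(\bar{\mathbf A}\odot\bar{\mathbf B})=\textup{range}(\mathbf T)=\textup{range}(\mathbf A\odot\mathbf B)$; hence each $\bar{\mathbf a}_s\bar{\mathbf b}_s^T$ lies in $\mathcal S$, is rank $1$, and by $(\mathrm{i})$ equals $\lambda_s\mathbf a_{\pi(s)}\mathbf b_{\pi(s)}^T$ for some scalar $\lambda_s\ne0$ and index $\pi(s)$. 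Linear independence of the two bases $\{\mathbf a_r\mathbf b_r^T\}$ and $\{\bar{\mathbf a}_s\bar{\mathbf b}_s^T\}$ of $\mathcal S$ makes $\pi$ a permutation, giving $\bar{\mathbf A}\odot\bar{\mathbf B}=(\mathbf A\odot\mathbf B)\mathbf{\Pi}$ for a monomial matrix $\mathbf{\Pi}$; left-cancelling the full-column-rank factor $\mathbf A\odot\mathbf B$ in $(\mathbf A\odot\mathbf B)\mathbf C^T=(\mathbf A\odot\mathbf B)\mathbf{\Pi}\bar{\mathbf C}^T$ yields $\mathbf C^T=\mathbf{\Pi}\bar{\mathbf C}^T$, i.e.\ $\bar{\mathbf C}$ equals $\mathbf C$ up to permutation and scaling of columns.

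I expect $(\mathrm{iii})\Rightarrow(\mathrm{i})$ to be the main obstacle, and I would prove it contrapositively. Suppose some $\mathbf d$ with $\omega(\mathbf d)\ge2$ gives $\mathbf A\,\textup{\text{Diag}}(\mathbf d)\,\mathbf B^T=\mathbf u\mathbf v^T$ of rank $\le1$. If $\mathbf A\odot\mathbf B$ is rank-deficient, eliminating one dependent column rewrites $\mathbf T$ with $R-1$ rank-$1$ terms, so $r_{\mathcal T}<R$ and $(\mathrm{iii})$ fails. Otherwise $\mathbf A\odot\mathbf B$ has full column rank; taking (after permutation) $d_1,d_2\ne0$, the matrix $\mathbf u\mathbf v^T=\sum_r d_r\mathbf a_r\mathbf b_r^T$ can replace $\mathbf a_1\mathbf b_1^T$ in the basis of $\mathcal S$, and writing $\mathbf A\odot\mathbf B=(\hat{\mathbf A}\odot\hat{\mathbf B})\mathbf N$ with $\hat{\mathbf A}=[\,\mathbf u\ \mathbf a_2\ \cdots\ \mathbf a_R\,]$, $\hat{\mathbf B}=[\,\mathbf v\ \mathbf b_2\ \cdots\ \mathbf b_R\,]$ and invertible $\mathbf N$ produces a genuine CPD $[\hat{\mathbf A},\hat{\mathbf B},\hat{\mathbf C}]_R$ with $\hat{\mathbf C}=\mathbf C\mathbf N^T$. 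The delicate point, which I would verify by a short direct computation, is that $\hat{\mathbf C}$ is \emph{not} a column permutation and scaling of $\mathbf C$: one finds $\hat{\mathbf c}_2=\mathbf c_2-\tfrac{d_2}{d_1}\mathbf c_1$, a combination of two linearly independent columns of $\mathbf C$ with both coefficients nonzero, hence proportional to no single $\mathbf c_r$. This exhibits an essentially different CPD, so $(\mathrm{iii})$ fails, completing the equivalence.
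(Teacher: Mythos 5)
Your proposal is correct, but for the two substantive implications it follows a genuinely different route from the paper. The equivalence (i)$\Leftrightarrow$(ii) is handled exactly as in the paper (Lemma \ref{lemma:equivUm} with $m=2$: Binet--Cauchy, $\mathcal C_2(\textup{\text{Diag}}(\mathbf d))=\textup{\text{Diag}}(\hatdSmR{2}{R})$, and the vectorization identity of Lemma \ref{cor2.6}). For (ii)$\Rightarrow$(iii), however, the paper routes through Proposition \ref{prmostgeneral}, whose engine is Kruskal's permutation lemma (Lemma \ref{lemmapermutation}): one verifies $\omega(\mathbf C^T\mathbf x)\leq\omega(\bar{\mathbf C}^T\mathbf x)$ for the relevant $\mathbf x$, obtains uniqueness of $\mathbf C$ first, and only then recovers $\mathbf A,\mathbf B$ by cancelling the full-column-rank $\mathbf C^T$. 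You instead run the slice-space argument directly (every rank-$1$ element of $\textup{range}(\mathbf A\odot\mathbf B)$ is a scaled $\mathbf a_r\otimes\mathbf b_r$), recover $\bar{\mathbf A},\bar{\mathbf B}$ first and $\bar{\mathbf C}$ last --- essentially the original Jiang--Sidiropoulos argument. Your version is more elementary and avoids the permutation lemma entirely, but it leans on $r_{\mathbf C}=R$ making $\textup{range}(\mathbf C^T)=\mathbb F^R$, so it does not extend to the setting $r_{\mathbf C}<R$ that Proposition \ref{prmostgeneral} is built to handle (conditions \cond{U}{m}, \cond{W}{m} with $m>2$); that extensibility is precisely what the paper's detour buys. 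For (iii)$\Rightarrow$(i) the paper simply cites Theorem \ref{necessityU2} from an external reference, whereas you give a self-contained constructive proof (either $r_{\mathcal T}<R$ when $\mathbf A\odot\mathbf B$ is rank-deficient, or an explicit alternative CPD with $\hat{\mathbf c}_2=\mathbf c_2-\tfrac{d_2}{d_1}\mathbf c_1$ not proportional to any column of $\mathbf C$); this is a genuine addition and is correct. One cosmetic remark: in (i)$\Rightarrow$(iii) you conclude only that $\bar{\mathbf C}$ matches $\mathbf C$ up to permutation and scaling, but statement (iii) asserts uniqueness of the full CPD; this is already contained in your identity $\bar{\mathbf a}_s\bar{\mathbf b}_s^T=\lambda_s\mathbf a_{\pi(s)}\mathbf b_{\pi(s)}^T$, which forces $\bar{\mathbf a}_s\propto\mathbf a_{\pi(s)}$ and $\bar{\mathbf b}_s\propto\mathbf b_{\pi(s)}$, so it is worth one explicit sentence.
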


Papers  \cite{JiangSid2004} and  \cite{DeLathauwer2006}  contain the following more restrictive sufficient condition for CPD uniqueness, formulated differently. This condition can be expressed in terms of second compound matrices as follows.

\begin{theorem}\label{Theorem1.12}\cite[Remark 1, p. 652]{DeLathauwer2006}, \cite{JiangSid2004}
Let  $\mathcal T=[\mathbf A,\mathbf B,\mathbf C]_R$,  $r_{\mathbf C}=R$, and suppose
\begin{equation}
\mathbf U =\mathcal C_2(\mathbf A)\odot \mathcal C_2(\mathbf B) \text{       has full column rank}. \tag{C{\scriptsize 2}}
\end{equation}
Then $r_{\mathcal T}=R$ and the CPD  of $\mathcal T$ is  unique.
\end{theorem}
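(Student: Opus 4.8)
The plan is to obtain Theorem~\ref{Theorem1.12} as an immediate consequence of the equivalence \cond{U}{2}$\Leftrightarrow$(iii) already established in Theorem~\ref{th:1.16}. The whole point is that hypothesis \cond{C}{2} is nothing but a convenient, readily verifiable \emph{sufficient} condition guaranteeing \cond{U}{2}; the substantive work has been done in Theorem~\ref{th:1.16}. So my strategy reduces to proving the single implication \cond{C}{2}$\Rightarrow$\cond{U}{2}.

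First I would introduce, for a given $\mathbf d=[d_1,\dots,d_R]^T\in\mathbb F^R$, the vector of all pairwise products
\begin{equation*}
\mathbf v(\mathbf d):=\left[\begin{matrix}d_1d_2&d_1d_3&\dots&d_1d_R&d_2d_3&\dots&d_{R-1}d_R\end{matrix}\right]^T\in\mathbb F^{C_R^2},
\end{equation*}
indexed exactly as the columns of $\mathbf U=\mathcal C_2(\mathbf A)\odot\mathcal C_2(\mathbf B)$, so that \cond{U}{2} reads: $\mathbf U\,\mathbf v(\mathbf d)=\vzero$ implies $\omega(\mathbf d)\leq 1$.

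Next I would use the hypothesis. Condition \cond{C}{2} says that $\mathbf U$ has full column rank, i.e. its null space is trivial. Hence any $\mathbf d$ with $\mathbf U\,\mathbf v(\mathbf d)=\vzero$ must satisfy $\mathbf v(\mathbf d)=\vzero$; in other words $d_id_j=0$ for every pair $i<j$. An elementary observation finishes the step: if two distinct components $d_i,d_j$ were both nonzero, their product would be nonzero, a contradiction, so at most one component of $\mathbf d$ is nonzero, that is, $\omega(\mathbf d)\leq 1$. This is precisely \cond{U}{2}. Invoking the implication \cond{U}{2}$\Rightarrow$(iii) of Theorem~\ref{th:1.16} then yields $r_{\mathcal T}=R$ together with uniqueness of the CPD of $\mathcal T$.

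Strictly speaking there is no hard part here: once Theorem~\ref{th:1.16} is available the argument is two lines, and the only thing to watch is the bookkeeping that the column index set of $\mathbf U$ matches the order of the products in $\mathbf v(\mathbf d)$, which is forced by the lexicographic convention on the second compound matrices. The reason the statement is worth recording separately is practical rather than deep: unlike \cond{U}{2}, the rank condition \cond{C}{2} can be tested directly from $\mathbf A$ and $\mathbf B$, and it makes transparent the link with the full-column-rank uniqueness results that precede it.
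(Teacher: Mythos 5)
Your proposal is correct and follows essentially the same route as the paper: the paper likewise derives Theorem~\ref{Theorem1.12} by noting that \cond{C}{2} (trivial kernel of $\mathcal C_2(\mathbf A)\odot\mathcal C_2(\mathbf B)$) a fortiori implies \cond{U}{2}, and then invoking the implication \textup{(ii)}$\Rightarrow$\textup{(iii)} of Theorem~\ref{th:1.16}. Your explicit unpacking of why $\mathbf v(\mathbf d)=\vzero$ forces $\omega(\mathbf d)\leq 1$ is just the content of Lemma~\ref{Lemma2.3}(1), which the paper leaves implicit by calling the step trivial.
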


It is clear that \cond{C}{2} implies \cond{U}{2}.
If $r_{\mathbf C}=R$, then Kruskal's condition \eqref{Kruskal} is more restrictive than  condition  \cond{C}{2}.

\begin{theorem}\label{TheoremAlwin}\cite[Proposition 3.2, p. 215 and Lemma 4.4, p. 221]{Stegeman2009}
Let  $\mathcal T=[\mathbf A,\mathbf B,\mathbf C]_R$ and let  $r_{\mathbf C}=R$. If
\begin{equation}\tag{K{\scriptsize 2}}
\left\{
  \begin{array}{rl}
    r_{\mathbf A}+k_{\mathbf B}&\geq R+2,\\
    k_{\mathbf A}&\geq 2
  \end{array}
\right.
\qquad\text{ or }\qquad
\left\{
  \begin{array}{rl}
      r_{\mathbf B}+k_{\mathbf A}&\geq R+2,\\
      k_{\mathbf B}&\geq 2,
  \end{array}
\right.
\end{equation}
then \cond{C}{2} holds.
Hence, $r_{\mathcal T}=R$ and the CPD of $\mathcal T$ is  unique.
\end{theorem}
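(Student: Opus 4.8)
\emph{Proof plan.}
Since Theorem~\ref{Theorem1.12} already converts \cond{C}{2} into the asserted statement $r_{\mathcal T}=R$ plus uniqueness, the whole content to be established is the implication \cond{K}{2}$\Rightarrow$\cond{C}{2}. The plan is first to observe that the two alternatives in \cond{K}{2} differ only by interchanging $\mathbf A$ and $\mathbf B$, and that $\mathcal C_2(\mathbf A)\odot\mathcal C_2(\mathbf B)$ and $\mathcal C_2(\mathbf B)\odot\mathcal C_2(\mathbf A)$ have the same column rank (their columns agree after one fixed permutation of rows). Hence I may assume the first alternative, $r_{\mathbf A}+k_{\mathbf B}\ge R+2$ and $k_{\mathbf A}\ge2$; note that $r_{\mathbf A}\le R$ then forces $k_{\mathbf B}\ge 2$.

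Next I would normalise $\mathbf A$. Writing a rank factorization $\mathbf A=\mathbf M\mathbf P$ with $\mathbf M\in\mathbb F^{I\times r_{\mathbf A}}$ of full column rank and $\mathbf P\in\mathbb F^{r_{\mathbf A}\times R}$ of full row rank, Cauchy--Binet gives $\mathcal C_2(\mathbf A)=\mathcal C_2(\mathbf M)\mathcal C_2(\mathbf P)$ and hence $\mathbf U=(\mathcal C_2(\mathbf M)\otimes\mathbf I)\,(\mathcal C_2(\mathbf P)\odot\mathcal C_2(\mathbf B))$. As $\mathcal C_2(\mathbf M)\otimes\mathbf I$ has full column rank and $k_{\mathbf P}=k_{\mathbf A}$, $r_{\mathbf P}=r_{\mathbf A}$, I may replace $\mathbf A$ by $\mathbf P$ and assume from now on that $\mathbf A\in\mathbb F^{r_{\mathbf A}\times R}$ has full row rank, so that its row space $\mathcal S$ has dimension $r_{\mathbf A}$. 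I would then translate \cond{C}{2} into a statement about $\mathcal S$: suppose $\mathbf U\mathbf x=\vzero$ with $\mathbf x=(x_{ij})_{i<j}$, and pair the first tensor factor of each column of $\mathbf U$ against a decomposable $2$-covector $\mathbf f\wedge\mathbf g$; this replaces the $\{i,j\}$-th column of $\mathcal C_2(\mathbf A)$ by the scalar $u_iv_j-u_jv_i$, where $\mathbf u=\mathbf A^T\mathbf f$, $\mathbf v=\mathbf A^T\mathbf g$. Since such covectors span the dual of $\wedge^2\mathbb F^{r_{\mathbf A}}$ and $\mathbf u,\mathbf v$ sweep out all of $\mathcal S$, the equation $\mathbf U\mathbf x=\vzero$ is equivalent to
\begin{equation*}
\sum_{1\le i<j\le R}x_{ij}(u_iv_j-u_jv_i)\,(\mathbf b_i\wedge\mathbf b_j)=\vzero\qquad\text{for all }\mathbf u,\mathbf v\in\mathcal S,
\end{equation*}
where $\mathbf b_i\wedge\mathbf b_j$ denotes the $\{i,j\}$-th column of $\mathcal C_2(\mathbf B)$.

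The decisive step, where both hypotheses are consumed, is a localization. I would argue by contradiction: assume $\mathbf x\ne\vzero$ and fix $\{i_0,j_0\}$ with $x_{i_0j_0}\ne0$. Because $k_{\mathbf A}\ge2$ the columns $\mathbf a_{i_0},\mathbf a_{j_0}$ are independent; extend them to a basis of the column space of $\mathbf A$ and let $T^c$ be any $R-k_{\mathbf B}$ of the remaining basis columns, which is possible since $R-k_{\mathbf B}\le r_{\mathbf A}-2$. Set $T:=\{1,\dots,R\}\setminus T^c$, so $\{i_0,j_0\}\subseteq T$, $\lvert T\rvert=k_{\mathbf B}$, the columns $\{\mathbf b_t:t\in T\}$ are independent, and hence the wedges $\{\mathbf b_i\wedge\mathbf b_j:\{i,j\}\subseteq T\}$ are linearly independent. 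On $V:=\mathcal S\cap\mathbb F^{T}$ all coordinates outside $T$ vanish, so the displayed identity collapses to a sum over pairs inside $T$ and, by independence of those wedges, forces $x_{ij}(u_iv_j-u_jv_i)=0$ for every $\{i,j\}\subseteq T$ and every $\mathbf u,\mathbf v\in V$.

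It remains to contradict this for the pair $\{i_0,j_0\}$. By the choice of $T^c$, the rank of $\mathbf A$ on columns $T^c\cup\{i_0,j_0\}$ exceeds that on $T^c$ by exactly two, which via $\dim(\mathcal S\cap\mathbb F^{W})=r_{\mathbf A}-r_{\mathbf A_{:,W^c}}$ is precisely the statement that the projection $V\to\mathbb F^2$, $\mathbf u\mapsto(u_{i_0},u_{j_0})$, is onto; picking $\mathbf u,\mathbf v\in V$ with independent images yields $u_{i_0}v_{j_0}-u_{j_0}v_{i_0}\ne0$ and hence $x_{i_0j_0}(u_{i_0}v_{j_0}-u_{j_0}v_{i_0})\ne0$, a contradiction. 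Thus $\mathbf x=\vzero$, \cond{C}{2} holds, and Theorem~\ref{Theorem1.12} completes the argument. I expect the main obstacle to be exactly this simultaneous bookkeeping: one must choose $T$ so that the $\mathbf B$-wedges indexed inside $T$ stay independent (requiring $\lvert T\rvert\le k_{\mathbf B}$) while the $\mathbf A$-projection onto $\{i_0,j_0\}$ survives the restriction to $V$ (requiring the two coordinates to remain free), and it is the inequality $r_{\mathbf A}+k_{\mathbf B}\ge R+2$ together with $k_{\mathbf A}\ge2$ that makes both demands compatible.
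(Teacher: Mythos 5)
Your argument is correct, and it reaches \cond{C}{2} by a genuinely different route than the paper. The paper simply invokes Lemma~\ref{compoundkhrkrusk} with $m=2$ and then Theorem~\ref{Theorem1.12}; that lemma's proof works directly with compound matrices: it writes the kernel condition as $\mathcal C_2(\mathbf A)\textup{\text{Diag}}(\mathbf d_{S_R^2})\mathcal C_2(\mathbf B)^T=\mzero$, builds an explicit right inverse of the first $k_{\mathbf B}$ rows of $\mathbf B^T$ to manufacture a matrix $\mathbf Y$ with $\mathcal C_2(\mathbf B^T\mathbf Y)$ having a $1$ in a designated position, and uses the Sylvester--Franke theorem to see that the relevant $C^2_{R-k_{\mathbf B}+2}$ columns of $\mathcal C_2(\mathbf A)$ are independent, killing one coefficient at a time before permuting. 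You instead normalise $\mathbf A$ to full row rank via a rank factorization and Binet--Cauchy, dualise the kernel condition into the bilinear identity $\sum x_{ij}(u_iv_j-u_jv_i)(\mathbf b_i\wedge\mathbf b_j)=\vzero$ over the row space $\mathcal S$, localise to a $k_{\mathbf B}$-element index set $T\supseteq\{i_0,j_0\}$ chosen so that the $\mathbf B$-wedges inside $T$ are independent, and finish with the dimension count $\dim(\mathcal S\cap\mathbb F^{W})=r_{\mathbf A}-r_{\mathbf A_{:,W^c}}$ to produce a witness pair $\mathbf u,\mathbf v$ with $u_{i_0}v_{j_0}-u_{j_0}v_{i_0}\ne0$. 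The combinatorial bookkeeping is the same in both proofs --- $r_{\mathbf A}+k_{\mathbf B}\ge R+2$ makes $|T^c|=R-k_{\mathbf B}\le r_{\mathbf A}-2$, and $k_{\mathbf A}\ge2$ lets $\mathbf a_{i_0},\mathbf a_{j_0}$ sit inside a basis --- but your version avoids Sylvester--Franke and explicit pseudoinverses in favour of exterior-algebra duality, which is arguably more transparent; the paper's version has the advantage of being formulated and proved uniformly for general $m$ (Lemma~\ref{compoundkhrkrusk}), which is needed elsewhere, whereas yours would have to be rerun with $m$-fold wedges to cover that generality. Your reduction of the second alternative of \cond{K}{2} by symmetry and your observation that $k_{\mathbf B}\ge2$ is automatic are both fine, and the final appeal to Theorem~\ref{Theorem1.12} matches the paper exactly.
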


Theorem \ref{TheoremAlwin} is due to A. Stegeman
\cite[Proposition 3.2, p. 215 and Lemma 4.4, p. 221]{Stegeman2009}.
Recently, another proof of Theorem \ref{TheoremAlwin} has been obtained in
\cite[Theorem 1,  p. 3477]{Guo_Miron_Brie_Zhu_Liao2011}.

Assuming $r_{\mathbf C}=R$, the conditions of Theorems \ref{theoremKruskal} through \ref{TheoremAlwin} are related by
\begin{equation}\label{manyimplications2}
\begin{split}
k_{\mathbf A}+k_{\mathbf B}+k_{\mathbf C}\geq 2R+2\ \Rightarrow \cond{K}{2}\ &\Rightarrow \cond{C}{2}\
\Rightarrow
 \cond{U}{2}\ \\
 &\Leftrightarrow \textup{$r_{\mathcal T}=R$ and the CPD of  }\mathcal T \textup{ is unique.}
\end{split}
\end{equation}

\subsubsection{Necessary conditions for  uniqueness of the CPD. Results concerning  rank and $k$-rank of Khatri-Rao product}\label{subsubsectionnecessity}
It was shown in \cite{TenBerge2002} that  condition \eqref{Kruskal} is not only sufficient but also necessary for the uniqueness of the CPD if $R=2$ or $R=3$.
Moreover, it was proved in  \cite{TenBerge2002} that if
$R=4$ and if the $k$-ranks of the factor matrices coincide with their ranks, then the CPD of
$[\mathbf A,\mathbf B,\mathbf C]_4$ is  unique if and only if condition
\eqref{Kruskal} holds. Passing to higher values of $R$  we have the  following theorems.
\begin{theorem}\label{NecessaryCPD}\cite[p. 651]{Strassen 1983}, \cite[p. 2079, Theorem 2]{LiuSid2001},\cite[p. 28]{Krijnen1991}
Let  $\mathcal T=[\mathbf A,\mathbf B,\mathbf C]_R$, $r_{\mathcal T}=R\geq 2$, and let the CPD of $\mathcal T$ be unique.
Then
\begin{romannum}
\item
$\mathbf A\odot\mathbf B$, $\mathbf B\odot\mathbf C$, $\mathbf C\odot\mathbf A$  have full column rank;
\item
$\min(k_{\mathbf A},k_{\mathbf B},k_{\mathbf C})\geq 2$.
\end{romannum}
\end{theorem}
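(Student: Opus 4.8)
The plan is to prove the contrapositive of each statement: if one of the Khatri-Rao products fails to have full column rank (part (i)), or if some factor matrix has two proportional columns (part (ii)), then we can exhibit a second, genuinely different CPD of $\mathcal T$, contradicting uniqueness. By the symmetric roles of $\mathbf A$, $\mathbf B$, $\mathbf C$ in the decomposition \eqref{eqintro2}, it suffices to treat one representative case of each part.

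For part (i), I would focus on $\mathbf A\odot\mathbf B$ and use the factorization \eqref{eqT_V}, namely $\mathbf T=(\mathbf A\odot\mathbf B)\mathbf C^T$. Suppose $\mathbf A\odot\mathbf B$ does not have full column rank, so there is a nonzero vector $\mathbf d=[d_1\ \dots\ d_R]^T$ in its kernel, i.e.\ $\sum_{r=1}^R d_r(\mathbf a_r\otimes\mathbf b_r)=\vzero$. The idea is to perturb the columns of $\mathbf C$ along the direction forced by $\mathbf d$ while leaving $\mathbf A$ and $\mathbf B$ fixed. Concretely, for a parameter $t$ set $\tilde{\mathbf c}_r:=\mathbf c_r+t\,d_r\,\mathbf e$ for a suitably chosen fixed vector $\mathbf e\in\mathbb F^K$ (or, more robustly, modify $\mathbf C$ by adding $t\,\mathbf e\,\mathbf d^T$ so that $\tilde{\mathbf C}=\mathbf C+t\,\mathbf e\,\mathbf d^T$). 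Then
\begin{equation*}
(\mathbf A\odot\mathbf B)\tilde{\mathbf C}^T=(\mathbf A\odot\mathbf B)\mathbf C^T+t\,(\mathbf A\odot\mathbf B)\mathbf d\,\mathbf e^T=\mathbf T+\vzero=\mathbf T,
\end{equation*}
so $[\mathbf A,\mathbf B,\tilde{\mathbf C}]_R$ is another PD of $\mathcal T$ for every $t$. It remains to argue that for a generic choice of $\mathbf e$ and small $t\neq 0$ the matrix $\tilde{\mathbf C}$ is not obtained from $\mathbf C$ by permutation and column scaling, so the new decomposition is essentially different; this contradicts uniqueness. Since $\mathbf d$ has at least one nonzero entry, choosing $\mathbf e$ outside the (finitely many) one-dimensional subspaces spanned by the $\mathbf c_r$ makes $\tilde{\mathbf c}_r$ not proportional to $\mathbf c_r$ for any index $r$ with $d_r\neq 0$, which yields the desired contradiction.

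For part (ii), I would again use symmetry and contradiction: suppose, say, $k_{\mathbf C}\leq 1$, so two columns of $\mathbf C$ are proportional, $\mathbf c_2=\lambda\,\mathbf c_1$ after relabeling. Then the first two rank-$1$ terms combine as $\mathbf a_1\circ\mathbf b_1\circ\mathbf c_1+\mathbf a_2\circ\mathbf b_2\circ\mathbf c_2=(\mathbf a_1\circ\mathbf b_1+\lambda\,\mathbf a_2\circ\mathbf b_2)\circ\mathbf c_1$, and the matrix $\mathbf a_1\mathbf b_1^T+\lambda\,\mathbf a_2\mathbf b_2^T$ of rank at most $2$ admits infinitely many distinct rank-$\leq 2$ outer-product decompositions; substituting any alternative one produces a different CPD of $\mathcal T$ with the same number $R$ of terms, contradicting uniqueness. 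A small caveat is the degenerate subcase where this combined slice has rank $1$, which would lower $r_{\mathcal T}$ below $R$ and contradict $r_{\mathcal T}=R$ instead; either way uniqueness (or minimality) fails.

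The main obstacle I anticipate is \emph{not} the algebra of producing an alternative decomposition, which is straightforward in both parts, but rather verifying rigorously that the constructed alternative is \emph{essentially different}---i.e.\ not reachable by the trivial permutation and scaling indeterminacies. In part (i) this requires a careful genericity argument over $\mathbf e$ and $t$ to rule out the coincidence $\tilde{\mathbf C}=\mathbf C\,\Pi\,\Lambda$ for some permutation $\Pi$ and nonsingular diagonal $\Lambda$; in part (ii) one must confirm that the alternative rank-$2$ decomposition of the combined slice genuinely changes at least one of the three factor matrices up to the trivial indeterminacies. I expect that keeping $\mathbf A$ and $\mathbf B$ fixed (so that only $\mathbf C$ varies) streamlines this bookkeeping considerably.
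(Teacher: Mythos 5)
The paper does not actually prove Theorem \ref{NecessaryCPD}; it is quoted from the cited references (Strassen, Liu--Sidiropoulos, Krijnen), so there is no in-paper proof to compare against. Your proposal reconstructs what is essentially the standard argument from that literature: for (i), perturb $\mathbf C$ to $\mathbf C + t\,\mathbf e\,\mathbf d^T$ with $\mathbf d$ a nonzero kernel vector of $\mathbf A\odot\mathbf B$, leaving \eqref{eqT_V} invariant; for (ii), collapse two terms with proportional $\mathbf c$-columns into a single slice and exploit the non-uniqueness of rank-$2$ matrix decompositions. Both constructions are correct and do produce alternative $R$-term PDs.

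The one step where the sketch under-delivers is the one you flag yourself: certifying that the alternative PD is \emph{essentially} different. Choosing $\mathbf e\notin\operatorname{span}(\mathbf c_r)$ only rules out the possibility that each new rank-$1$ term equals the old term \emph{of the same index}; it does not by itself exclude a nontrivial permutation matching new term $r$ with old term $s\neq r$. The missing ingredient is the observation that $r_{\mathcal T}=R$ forces the columns of $\mathbf A\odot\mathbf B$ to be nonzero and pairwise non-proportional --- otherwise two rank-$1$ terms in \eqref{decomposition21} merge into one and $\mathcal T$ admits an $(R-1)$-term PD. Since $\mathbf A$ and $\mathbf B$ are unchanged, a new term $(\mathbf a_r\otimes\mathbf b_r)\tilde{\mathbf c}_r^T$ can then only coincide with the old term of the same index $r$, and your genericity argument over $\mathbf e$ (and $t$) closes the proof. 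The same observation handles the degenerate rank-$1$ subcase in (ii), and one should note that the edge case $K=1$ in (i), where no admissible $\mathbf e$ exists, is covered by the argument for (ii) since then any two columns of $\mathbf C$ are dependent. With these one-line additions the proposal is complete and matches the standard proof of the cited result.
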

\begin{theorem}\label{necessityU2}\cite[Theorem 2.3]{LievenLL1}
Let  $\mathcal T=[\mathbf A,\mathbf B,\mathbf C]_R$, $r_{\mathcal T}=R\geq 2$, and let the CPD of $\mathcal T$ be unique.
Then the condition \textup{(U{\scriptsize 2})} holds for the  pairs $(\mathbf A,\mathbf B)$, $(\mathbf B,\mathbf C)$, and $(\mathbf C,\mathbf A)$.
\end{theorem}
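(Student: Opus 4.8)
The plan is to establish condition \cond{U}{2} for a single pair, say $(\mathbf A,\mathbf B)$, and then to obtain the other two pairs from the symmetry of the CPD model under permutation of the three tensor modes (both uniqueness and $r_{\mathcal T}$ are invariant under such permutations, so the verbatim argument applies to $(\mathbf B,\mathbf C)$ and $(\mathbf C,\mathbf A)$). By the equivalence of statements (i) and (ii) in Theorem \ref{th:1.16} --- which is the purely linear-algebraic Cauchy--Binet identity relating the $2\times 2$ minors of $\mathbf A\,\textup{\text{Diag}}(\mathbf d)\,\mathbf B^T$ to the entries of $(\mathcal C_2(\mathbf A)\odot\mathcal C_2(\mathbf B))[d_1d_2\ \dots\ d_{R-1}d_R]^T$, and which never uses $\mathbf C$ --- condition \cond{U}{2} for $(\mathbf A,\mathbf B)$ is equivalent to the implication $r_{\mathbf A\,\textup{\text{Diag}}(\mathbf d)\,\mathbf B^T}\leq 1\Rightarrow\omega(\mathbf d)\leq 1$. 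I will therefore argue by contraposition: assuming a vector $\mathbf d\in\mathbb F^R$ with $\omega(\mathbf d)\geq 2$ and $r_{\mathbf A\,\textup{\text{Diag}}(\mathbf d)\,\mathbf B^T}\leq 1$, I will contradict uniqueness of the CPD.

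Write $\mathbf A\,\textup{\text{Diag}}(\mathbf d)\,\mathbf B^T=\sum_{r=1}^R d_r\mathbf a_r\mathbf b_r^T=\mathbf p\mathbf q^T$ with $r_{\mathbf p\mathbf q^T}\leq 1$, and assume after permuting terms that $d_1\neq 0$. The key observation is that for every $\mathbf e\in\mathbb F^K$ one has the tensor identity $\mathbf p\circ\mathbf q\circ\mathbf e=\sum_{r=1}^R d_r\,\mathbf a_r\circ\mathbf b_r\circ\mathbf e$, since both sides have $(i,j,k)$-entry $e_k(\mathbf p\mathbf q^T)_{ij}$. Adding the zero tensor $t(\mathbf p\circ\mathbf q\circ\mathbf e-\sum_r d_r\,\mathbf a_r\circ\mathbf b_r\circ\mathbf e)$ to $\mathcal T$ and regrouping gives, for any scalar $t$,
\begin{equation*}
\mathcal T=\sum_{r=1}^R\mathbf a_r\circ\mathbf b_r\circ(\mathbf c_r-td_r\mathbf e)+t\,\mathbf p\circ\mathbf q\circ\mathbf e.
\end{equation*}
Choosing $\mathbf e=\tfrac{1}{td_1}\mathbf c_1$ (with $t\neq0$) annihilates the $r=1$ term and, after the factors $t$ cancel, produces the $R$-term decomposition
\begin{equation*}
\mathcal T=\sum_{r=2}^R\mathbf a_r\circ\mathbf b_r\circ\Bigl(\mathbf c_r-\tfrac{d_r}{d_1}\mathbf c_1\Bigr)+\mathbf p\circ\mathbf q\circ\tfrac{1}{d_1}\mathbf c_1,\qquad(\ast)
\end{equation*}
which is independent of $t$. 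If $\mathbf p\mathbf q^T=\mzero$ (the rank-$0$ case), the last term vanishes and $\mathcal T$ becomes a sum of $R-1$ rank-$1$ tensors, forcing $r_{\mathcal T}\leq R-1$, contrary to hypothesis; hence $r_{\mathbf p\mathbf q^T}=1$ and $\mathbf p,\mathbf q\neq\vzero$. Similarly, were any summand of $(\ast)$ zero we would again get $r_{\mathcal T}<R$; so $(\ast)$ is a genuine PD with exactly $R=r_{\mathcal T}$ nonzero rank-$1$ terms, i.e. a second CPD of $\mathcal T$.

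It remains to show that $(\ast)$ is not trivially equivalent to the original CPD. By Theorem \ref{NecessaryCPD}(i), uniqueness implies that $\mathbf A\odot\mathbf B$ has full column rank, so the columns $\mathbf a_r\otimes\mathbf b_r$ are pairwise non-proportional. Consequently, for each $r\geq 2$ the term $\mathbf a_r\circ\mathbf b_r\circ(\mathbf c_r-\tfrac{d_r}{d_1}\mathbf c_1)$ of $(\ast)$ can coincide (as a rank-$1$ tensor) with an original term $\mathbf a_s\circ\mathbf b_s\circ\mathbf c_s$ only for $s=r$; equality of these two tensors then forces $\mathbf c_r-\tfrac{d_r}{d_1}\mathbf c_1=\mathbf c_r$, that is $d_r=0$ (using $\mathbf c_1\neq\vzero$ and $d_1\neq0$), for every $r\geq 2$. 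But this yields $\omega(\mathbf d)=1$, contradicting $\omega(\mathbf d)\geq2$. Hence $(\ast)$ is an essentially different CPD, contradicting uniqueness and establishing \cond{U}{2} for $(\mathbf A,\mathbf B)$; the pairs $(\mathbf B,\mathbf C)$ and $(\mathbf C,\mathbf A)$ follow by mode symmetry.

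The main obstacle I expect is the bookkeeping in this last step: one must exclude ``accidental'' matchings in which the new rank-$1$ terms are reshuffled among the old ones, and this is precisely where the full column rank of $\mathbf A\odot\mathbf B$ (itself a consequence of uniqueness via Theorem \ref{NecessaryCPD}) is indispensable, since it pins the matching bijection down to the identity on the indices $2,\dots,R$ and isolates the offending term $\mathbf p\circ\mathbf q\circ\tfrac{1}{d_1}\mathbf c_1$. Beyond that, some care is needed to dispose of the degenerate rank-$0$ and term-collapse cases, each of which contradicts $r_{\mathcal T}=R$ directly.
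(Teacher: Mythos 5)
There is nothing in the paper to compare your argument with: Theorem \ref{necessityU2} is stated without proof and is simply quoted from \cite[Theorem 2.3]{LievenLL1}. Judged on its own, your proof is correct, and it is the natural argument for this necessity statement: starting from a vector $\mathbf d$ with $\omega(\mathbf d)\geq 2$ and $r_{\mathbf A\textup{\text{Diag}}(\mathbf d)\mathbf B^T}\leq 1$, you absorb the rank-$\leq 1$ matrix $\sum_r d_r\mathbf a_r\mathbf b_r^T$ into a single new rank-$1$ term while cancelling the first original term, producing a second $R$-term decomposition. The degenerate cases (the matrix $\mathbf p\mathbf q^T$ being zero, or a collapsed summand in $(\ast)$) are correctly excluded via $r_{\mathcal T}=R$, and the full column rank of $\mathbf A\odot\mathbf B$ --- legitimately imported from Theorem \ref{NecessaryCPD}, whose hypotheses are exactly those of the present theorem --- does pin the matching bijection down to the identity on the indices $2,\dots,R$ and yields $d_r=0$ for all $r\geq 2$, contradicting $\omega(\mathbf d)\geq 2$. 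Two cosmetic remarks: the equivalence you borrow from Theorem \ref{th:1.16}(i)$\Leftrightarrow$(ii) is cleaner to cite as Lemma \ref{lemma:equivUm} with $m=2$, since Theorem \ref{th:1.16} formally carries the hypothesis $r_{\mathbf C}=R$, which is not available here (you correctly note that the equivalence never involves $\mathbf C$); and the auxiliary parameter $t$ is superfluous, since one may take $\mathbf e=\mathbf c_1/d_1$ and $t=1$ from the outset.
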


Theorem \ref{necessityU2} gives more restrictive uniqueness conditions than Theorem  \ref{NecessaryCPD} and generalizes the
implication \textup{(iii)}$\Rightarrow$\textup(ii) of Theorem \ref{th:1.16} to CPDs with $r_{\mathbf C}\leq R$.

The following lemma gives a condition under which
\begin{align}
\mathbf A\odot\mathbf B \text{         has full column rank.}
\tag{C{\scriptsize 1}}
\end{align}
\begin{lemma}\label{lemmaMiron1}  \cite[Lemma 1, p. 3477]{Guo_Miron_Brie_Zhu_Liao2011}
Let $\mathbf A\in\mathbb F^{I\times R}$ and $\mathbf B\in\mathbb F^{J\times R}$.
If
\begin{align}
& \left\{
  \begin{array}{rl}
    r_{\mathbf A}+k_{\mathbf B}&\geq R+1,\\
    k_{\mathbf A}&\geq 1
  \end{array}
\right.
\qquad\text{ or }\qquad
\left\{
  \begin{array}{rl}
      r_{\mathbf B}+k_{\mathbf A}&\geq R+1,\\
      k_{\mathbf B}&\geq 1,
  \end{array}
\right.
\tag{K{\scriptsize 1}}
\end{align}
then \cond{C}{1} holds.
\end{lemma}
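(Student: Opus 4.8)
We need to show that condition (K{\scriptsize 1}) implies (C{\scriptsize 1}), i.e.\ that $\mathbf A\odot\mathbf B$ has full column rank $R$. By symmetry of the two alternatives in (K{\scriptsize 1}) (swapping the roles of $\mathbf A$ and $\mathbf B$), it suffices to treat the first case, $r_{\mathbf A}+k_{\mathbf B}\geq R+1$ with $k_{\mathbf A}\geq 1$. Saying $\mathbf A\odot\mathbf B$ has full column rank is equivalent to saying that the only vector $\mathbf d=(d_1,\dots,d_R)^T$ with $(\mathbf A\odot\mathbf B)\mathbf d=\mathbf 0$ is $\mathbf d=\mathbf 0$. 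Since the $r$-th column of $\mathbf A\odot\mathbf B$ is $\mathbf a_r\otimes\mathbf b_r$, which corresponds to the rank-$1$ matrix $\mathbf a_r\mathbf b_r^T$, the condition $(\mathbf A\odot\mathbf B)\mathbf d=\mathbf 0$ is equivalent to $\sum_{r=1}^R d_r\,\mathbf a_r\mathbf b_r^T=\mathbf A\,\textup{\text{Diag}}(\mathbf d)\,\mathbf B^T=\mathbf O_{I\times J}$. So the plan is to assume $\mathbf A\,\textup{\text{Diag}}(\mathbf d)\,\mathbf B^T=\mathbf O$ and deduce $\mathbf d=\mathbf 0$.

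**The main argument.** The plan is a support/counting argument on $\omega(\mathbf d)$, the number of nonzero entries of $\mathbf d$. Suppose for contradiction that $\mathbf d\neq\mathbf 0$, and let $S=\{r:d_r\neq 0\}$, so $\omega(\mathbf d)=|S|\geq 1$. Restricting to the columns indexed by $S$, write $\mathbf A_S$, $\mathbf B_S$ for the corresponding column submatrices and $\mathbf d_S$ for the nonzero part; then $\mathbf A_S\,\textup{\text{Diag}}(\mathbf d_S)\,\mathbf B_S^T=\mathbf O$ with every entry of $\mathbf d_S$ nonzero. The key is to bound $\omega(\mathbf d)$ from two sides. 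On the one hand, I expect to show $\omega(\mathbf d)\geq k_{\mathbf B}+1$: if $\omega(\mathbf d)\leq k_{\mathbf B}$ then the columns of $\mathbf B_S$ are linearly independent (by definition of $k$-rank), so $\mathbf B_S$ has a left inverse; multiplying $\mathbf A_S\,\textup{\text{Diag}}(\mathbf d_S)\,\mathbf B_S^T=\mathbf O$ on the right by $(\mathbf B_S^T)^{+}$ forces $\mathbf A_S\,\textup{\text{Diag}}(\mathbf d_S)=\mathbf O$, and since the $d_r$ are nonzero this makes every column of $\mathbf A_S$ zero, contradicting $k_{\mathbf A}\geq 1$. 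Hence $\omega(\mathbf d)\geq k_{\mathbf B}+1$.

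**Closing the gap.** On the other hand, I want an upper bound of the form $\omega(\mathbf d)\leq R-r_{\mathbf A}+(\text{something}\leq 0)$ that contradicts $\omega(\mathbf d)\geq k_{\mathbf B}+1$ under $r_{\mathbf A}+k_{\mathbf B}\geq R+1$. Concretely, the relation $\mathbf A_S\,\textup{\text{Diag}}(\mathbf d_S)\,\mathbf B_S^T=\mathbf O$ says that each row of $\textup{\text{Diag}}(\mathbf d_S)\,\mathbf B_S^T$, equivalently each scaled row $d_r\mathbf b_r^T$, lies in a space annihilated by $\mathbf A_S$ in a way that couples it to the left null structure of $\mathbf A_S$. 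The cleaner route is to count ranks: since $r_{\mathbf A}+k_{\mathbf B}\geq R+1\geq\omega(\mathbf d)+1$, we have $r_{\mathbf A}\geq\omega(\mathbf d)-k_{\mathbf B}+1$; combined with $\omega(\mathbf d)\geq k_{\mathbf B}+1$ one derives that $\mathbf A_S$ must have rank equal to $\omega(\mathbf d)=|S|$, i.e.\ $\mathbf A_S$ has full column rank and therefore a left inverse. Multiplying $\mathbf A_S\,\textup{\text{Diag}}(\mathbf d_S)\,\mathbf B_S^T=\mathbf O$ on the left by that left inverse yields $\textup{\text{Diag}}(\mathbf d_S)\,\mathbf B_S^T=\mathbf O$, whence $\mathbf B_S^T=\mathbf O$ (all $d_r\neq 0$), contradicting $k_{\mathbf B}\geq\min(\omega(\mathbf d),\dots)\geq 1$.

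**The anticipated obstacle.** The delicate point is the step asserting that $\mathbf A_S$ has full column rank $|S|$. The bound $r_{\mathbf A}\geq|S|$ refers to the rank of the \emph{full} matrix $\mathbf A$, not of the submatrix $\mathbf A_S$, so I cannot immediately conclude that $|S|$ specific columns are independent. I expect the correct fix is to invoke the $k$-rank more carefully or to pick $S$ extremally (e.g.\ a minimal support among nonzero $\mathbf d$ in the kernel), so that a dimension count on $\textup{range}(\mathbf A_S)$ versus the left null space of $\mathbf B_S$ becomes forced. Making the two inequalities $\omega(\mathbf d)\geq k_{\mathbf B}+1$ and the consequence of $r_{\mathbf A}+k_{\mathbf B}\geq R+1$ collide correctly—ensuring the rank counting is applied to the submatrices and not spuriously to the full matrices—is the part that requires the most care.
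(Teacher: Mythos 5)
Your reduction of \cond{C}{1} to the statement that $\mathbf A\,\textup{\text{Diag}}(\mathbf d)\,\mathbf B^T=\mzero$ forces $\mathbf d=\vzero$, and your first step showing $\omega(\mathbf d)\geq k_{\mathbf B}+1$ for a putative nonzero kernel vector (if $\omega(\mathbf d)\leq k_{\mathbf B}$ then $\mathbf B_S^T$ has a right inverse, whence $\mathbf A_S\,\textup{\text{Diag}}(\mathbf d_S)=\mzero$ and some column of $\mathbf A$ vanishes, contradicting $k_{\mathbf A}\geq 1$), are both correct. The gap is exactly where you flag it: the assertion that $\mathbf A_S$ has full column rank $|S|$ does not follow from $r_{\mathbf A}\geq \omega(\mathbf d)-k_{\mathbf B}+1$ together with $\omega(\mathbf d)\geq k_{\mathbf B}+1$. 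Those inequalities constrain the rank of the \emph{full} matrix $\mathbf A$, and under \cond{K}{1} one can perfectly well have $r_{\mathbf A}=R-k_{\mathbf B}+1<R$; if $\mathbf d$ had full support, then $\mathbf A_S=\mathbf A$ could not have rank $|S|=R$, so full column rank of $\mathbf A_S$ is simply not a consequence of the hypotheses you have accumulated, and the contradiction cannot be reached by producing a left inverse of $\mathbf A_S$. The ``minimal support'' fix you float is not worked out and does not obviously yield independence of the columns of $\mathbf A_S$ either.

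The missing ingredient is the Sylvester rank inequality applied to the restricted matrices --- the same device the paper uses in the proof of Lemma \ref{HmUm} and which underlies Corollary \ref{rank Khatri-Rao lemma}. From $\mathbf A_S\,\textup{\text{Diag}}(\mathbf d_S)\,\mathbf B_S^T=\mzero$ one gets $r_{\mathbf A_S}+r_{\mathbf B_S}-|S|\leq r_{\mathbf A_S\textup{\text{Diag}}(\mathbf d_S)\mathbf B_S^T}=0$. Since deleting $R-|S|$ columns lowers the rank by at most $R-|S|$, we have $r_{\mathbf A_S}\geq r_{\mathbf A}-(R-|S|)\geq |S|+1-k_{\mathbf B}$, and since $|S|\geq k_{\mathbf B}+1$ the definition of $k$-rank gives $r_{\mathbf B_S}\geq k_{\mathbf B}$; adding these yields $1\leq 0$, the desired contradiction. (So the two-sided collision you were aiming for is between rank bounds on the submatrices, not a full-column-rank statement.) Note also that the paper's own route to \cond{K}{m}$\Rightarrow$\cond{C}{m} in Lemma \ref{compoundkhrkrusk}, of which your statement is the case $m=1$, is different again: it kills each coordinate $d_j$ separately by multiplying $\mathbf A\,\textup{\text{Diag}}(\mathbf d)\,\mathbf B^T$ on the right by an explicitly constructed right inverse built from $k_{\mathbf B}$ independent columns of $\mathbf B$, then permuting; no support argument or rank inequality is needed there. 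Either repair completes your proof; as written, the final step fails.
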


We conclude this section by mentioning two important corollaries that we will use.

\begin{corollary}\label{rank Khatri-Rao lemma}\cite[Lemma 1,  p. 2382]{Senior00parallelfactor}
If $k_{\mathbf A}+k_{\mathbf B}\geq R+1$,  then \cond{C}{1} holds.
\end{corollary}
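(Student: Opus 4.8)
The goal is to prove Corollary \ref{rank Khatri-Rao lemma}: if $k_{\mathbf A}+k_{\mathbf B}\geq R+1$, then $\mathbf A\odot\mathbf B$ has full column rank. The cleanest route is to derive this directly from Lemma \ref{lemmaMiron1} (condition \cond{K}{1}), since the latter is already available and gives a sufficient condition for \cond{C}{1}. The plan is therefore to show that $k_{\mathbf A}+k_{\mathbf B}\geq R+1$ forces at least one of the two alternative systems in \cond{K}{1} to hold.

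First I would recall the elementary inequalities $k_{\mathbf A}\leq r_{\mathbf A}$ and $k_{\mathbf B}\leq r_{\mathbf B}$, which hold for any matrix. The hypothesis $k_{\mathbf A}+k_{\mathbf B}\geq R+1$ together with $R\geq 1$ immediately gives $k_{\mathbf A}\geq 1$ and $k_{\mathbf B}\geq 1$ (if either $k$-rank were zero, the sum could be at most $R$ since a single $k$-rank never exceeds $R$, contradicting the bound once we note $k_{\mathbf A},k_{\mathbf B}\leq R$). This handles the second line of each system in \cond{K}{1}.

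For the first line, I would use the monotonicity $k\leq r$ to estimate $r_{\mathbf A}+k_{\mathbf B}\geq k_{\mathbf A}+k_{\mathbf B}\geq R+1$, which is exactly the first inequality of the left-hand system in \cond{K}{1}. (Symmetrically, $r_{\mathbf B}+k_{\mathbf A}\geq R+1$ gives the right-hand system.) Thus the left-hand alternative of \cond{K}{1} is satisfied outright, and Lemma \ref{lemmaMiron1} yields \cond{C}{1}, i.e.\ $\mathbf A\odot\mathbf B$ has full column rank.

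There is essentially no obstacle here: the entire content is the observation that \cond{K}{1} is weaker than the symmetric hypothesis $k_{\mathbf A}+k_{\mathbf B}\geq R+1$, once one substitutes the trivial bounds $k_{\mathbf A}\leq r_{\mathbf A}$ and $k_{\mathbf B}\leq r_{\mathbf B}$. The only point requiring a moment of care is confirming the positivity $k_{\mathbf A}\geq 1$ and $k_{\mathbf B}\geq 1$, which is where the degenerate case of an all-zero column (forcing a $k$-rank of $0$) must be excluded; this follows because the sum bound together with $k_{\mathbf B}\leq R$ forces $k_{\mathbf A}\geq R+1-R=1$, and symmetrically for $k_{\mathbf B}$. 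Hence the corollary is an immediate consequence of Lemma \ref{lemmaMiron1}.
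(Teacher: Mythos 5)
Your derivation is correct and matches the paper's intent: the statement is presented there as an immediate corollary of Lemma \ref{lemmaMiron1}, obtained exactly as you do by substituting $k_{\mathbf A}\leq r_{\mathbf A}$ into the first inequality of \cond{K}{1} and noting that $k_{\mathbf A}\geq R+1-k_{\mathbf B}\geq 1$ since $k_{\mathbf B}\leq R$. No gaps.
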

\begin{corollary}\label{k-rank Khatri-Rao lemma}
\cite[Lemma 1,  p. 231]{890366}
 If  $k_{\mathbf A}\geq 1$ and  $k_{\mathbf B}\geq 1$,  then\\
$
k_{\mathbf A\odot \mathbf B}\geq \min (k_{\mathbf A}+k_{\mathbf B}-1, R).
$
\end{corollary}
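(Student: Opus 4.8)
The plan is to prove the equivalent statement that every set of $s:=\min(k_{\mathbf A}+k_{\mathbf B}-1,R)$ columns of $\mathbf A\odot\mathbf B$ is linearly independent; by Definition \ref{defkrank} this is exactly what $k_{\mathbf A\odot\mathbf B}\geq s$ asserts. So I would fix an arbitrary index set $S\subseteq\{1,\dots,R\}$ with $\textup{card}(S)=s$ and let $\mathbf A_S$, $\mathbf B_S$ denote the corresponding column submatrices of $\mathbf A$ and $\mathbf B$. The columns of $\mathbf A\odot\mathbf B$ indexed by $S$ are precisely the columns of $\mathbf A_S\odot\mathbf B_S\in\mathbb F^{IJ\times s}$, so it suffices to show that this matrix has full column rank; since $S$ is arbitrary, the conclusion follows.

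The key observation is that the $k$-ranks of $\mathbf A_S$ and $\mathbf B_S$ are controlled by those of $\mathbf A$ and $\mathbf B$. Indeed, any subcollection of at most $k_{\mathbf A}$ columns of $\mathbf A$ is linearly independent, so any $\min(k_{\mathbf A},s)$ columns of the $s$-column matrix $\mathbf A_S$ are independent, whence $k_{\mathbf A_S}\geq\min(k_{\mathbf A},s)$, and likewise $k_{\mathbf B_S}\geq\min(k_{\mathbf B},s)$. I would then invoke Corollary \ref{rank Khatri-Rao lemma} applied to the pair $(\mathbf A_S,\mathbf B_S)$, whose matrices have $s$ columns: it guarantees that $\mathbf A_S\odot\mathbf B_S$ has full column rank as soon as $k_{\mathbf A_S}+k_{\mathbf B_S}\geq s+1$.

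Thus the whole argument reduces to the elementary inequality $\min(k_{\mathbf A},s)+\min(k_{\mathbf B},s)\geq s+1$, valid whenever $1\leq s\leq k_{\mathbf A}+k_{\mathbf B}-1$ (and $s\leq R$). I would verify it by a short case split according to whether $s$ lies below $\min(k_{\mathbf A},k_{\mathbf B})$, between the two $k$-ranks, or above $\max(k_{\mathbf A},k_{\mathbf B})$: in the top case the hypothesis $s\leq k_{\mathbf A}+k_{\mathbf B}-1$ is exactly what makes $k_{\mathbf A}+k_{\mathbf B}\geq s+1$, and in the other two cases $k_{\mathbf A}\geq1$ and $k_{\mathbf B}\geq1$ suffice. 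This is the only place where the standing hypotheses $k_{\mathbf A}\geq1$, $k_{\mathbf B}\geq1$ and the precise value of the bound enter, and it is the main (though routine) obstacle; the capping at $R$ is automatic since $s\leq R$ forces $\mathbf A_S=\mathbf A$, $\mathbf B_S=\mathbf B$ and Corollary \ref{rank Khatri-Rao lemma} applies directly.

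As a self-contained alternative that does not appeal to Corollary \ref{rank Khatri-Rao lemma}, I would argue by contradiction: a nonzero $\mathbf d\in\mathbb F^R$ with $(\mathbf A\odot\mathbf B)\mathbf d=\vzero$ and $\omega(\mathbf d)=w$ reshapes, via the identification of $\mathbf a_r\otimes\mathbf b_r$ with the matrix $\mathbf a_r\mathbf b_r^T$, into $\mathbf A_S\,\textup{\text{Diag}}(\mathbf d_S)\,\mathbf B_S^T=\mzero_{I\times J}$, where $S=\{r:d_r\neq0\}$ has $\textup{card}(S)=w$ and $\textup{\text{Diag}}(\mathbf d_S)$ is invertible. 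Sylvester's rank inequality applied to this product (inner dimension $w$) gives $0=r_{\mathbf A_S\textup{\text{Diag}}(\mathbf d_S)\mathbf B_S^T}\geq r_{\mathbf A_S}+r_{\mathbf B_S}-w$, that is $r_{\mathbf A_S}+r_{\mathbf B_S}\leq w$. Since $r_{\mathbf A_S}\geq\min(k_{\mathbf A},w)$ and $r_{\mathbf B_S}\geq\min(k_{\mathbf B},w)$, the same counting inequality forces $w\geq k_{\mathbf A}+k_{\mathbf B}$. Hence no nonzero null combination of the columns of $\mathbf A\odot\mathbf B$ uses fewer than $k_{\mathbf A}+k_{\mathbf B}$ columns, which again yields $k_{\mathbf A\odot\mathbf B}\geq\min(k_{\mathbf A}+k_{\mathbf B}-1,R)$.
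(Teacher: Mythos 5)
Both of your arguments are correct. The paper does not prove this corollary itself (it is quoted from \cite{890366}), but it records that the proof there is based on Corollary~\ref{rank Khatri-Rao lemma} and that all the known proofs rest on the Sylvester rank inequality; your first argument is precisely the former route and your self-contained alternative precisely the latter, so you have reconstructed the standard proofs rather than deviated from them.
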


The proof of Corollary \ref{k-rank Khatri-Rao lemma} in \cite{890366} was  based on Corollary \ref{rank Khatri-Rao lemma}.
Other proofs are given in \cite[Lemma 1,  p. 231]{CEM:CEM587} and
\cite[Lemma 3.3,  p. 544]{Stegeman2007}. (The proof in \cite{Stegeman2007}  is due to J. Ten Berge, see also \cite{TenBerge2000}.) All mentioned proofs are based on the Sylvester rank  inequality.

\subsection{Results and organization}\label{Subsection1.4}
Motivated by the conditions
 appearing in the various theorems of the preceding section, we formulate more
 general versions, depending on an integer parameter $m$. How these conditions,
 in conjunction with other assumptions, imply the uniqueness of  one particular factor matrix
 will be the core of our work.

 To introduce the new conditions we need the following notation. With a vector $\mathbf d=\left[\begin{matrix}d_{1}&\dots& d_R\end{matrix}\right]^T$
we associate the vector
\begin{equation}
\hatdSmR{m}{R}:=
 \left[\begin{matrix}d_{1}\cdots d_m&d_{1}\cdots d_{m-1}d_{m+1}&\dots& d_{R-m+1}\cdots d_R\end{matrix}\right]^T\in\mathbb F^{C^m_R},
  \label{eqd_big_product}
\end{equation}
whose entries are all products $d_{i_1}\cdots d_{i_m}$ with $1\leq i_1<\dots<i_m\leq R$.
 Let us define conditions \cond{K}{m}, \cond{C}{m}, \cond{U}{m} and \cond{W}{m}, which
  depend on matrices $\mathbf A\in\mathbb F^{I\times R}$, $\mathbf B\in\mathbb F^{J\times R}$, $\mathbf C\in\mathbb F^{K\times R}$ and an integer parameter $m$:
\begin{align}
&\left\{
  \begin{array}{rl}
    r_{\mathbf A}+k_{\mathbf B}&\geq R+m,\\
    k_{\mathbf A}&\geq m
  \end{array}
\right.
\qquad\text{ or }\qquad
\left\{
  \begin{array}{rl}
      r_{\mathbf B}+k_{\mathbf A}&\geq R+m,\\
      k_{\mathbf B}&\geq m
  \end{array}
\right.
;
\tag{K{\scriptsize m}}
\\
&\ \quad\mathcal C_{m}(\mathbf A)\odot \mathcal C_{m}(\mathbf B)\quad\ \ \text{ has full column rank}\tag{C{\scriptsize m}};
\\
&\begin{cases}
(\mathcal C_{m}(\mathbf A )\odot\mathcal C_m(\mathbf B))\hatdSmR{m}{R}=\vzero,\\
\mathbf d\in\mathbb F^R
\end{cases}\Rightarrow\quad
\hatdSmR{m}{R}=\vzero;
\tag{U{\scriptsize m}}
\\
& \begin{cases}
(\mathcal C_{m}(\mathbf A)\odot \mathcal C_m(\mathbf B))\hatdSmR{m}{R}=\vzero,\\
\mathbf d\in\textup{range}(\mathbf C^T)
\end{cases} \Rightarrow\quad
\hatdSmR{m}{R}=\vzero.\tag{W{\scriptsize m}}
\end{align}
In \S \ref{Preliminaries} we give a formal definition of compound matrices and present some of their properties. This basic material will be heavily used in the following sections.

In \S \ref{Section3} we establish the following implications:
\begin{equation}
\begin{matrix}
&  (\text{W{\scriptsize m}})       &\
            &(\text{W{\scriptsize m-1}})   &\          &\dots&\          &(\text{W{\scriptsize 2}}) &\  &(\text{W{\scriptsize 1}})\\
\text{(Lemma } \ref{PropositionA1})
&   \Uparrow  &\           &\Uparrow  &\          &\dots&\          &\Uparrow  &\           &\Uparrow\\
\text{(Lemma } \ref{compoundumuk})
&   (\text{U{\scriptsize m}})       &\Rightarrow           &(\text{U{\scriptsize m-1}})   &\Rightarrow          &\dots&\Rightarrow          &(\text{U{\scriptsize 2}})
     &\Rightarrow  &(\text{U{\scriptsize 1}})\\
\text{(Lemma } \ref{C_mU_m})
&    \Uparrow  &\           &\Uparrow  &\          &\dots&\          &\Uparrow  &\           &\Updownarrow\\
\text{(Lemma } \ref{compoundkhr})
&    (\text{C{\scriptsize m}})       &\Rightarrow &(\text{C{\scriptsize m-1}})   &\Rightarrow&\dots&\Rightarrow&(\text{C{\scriptsize 2}})       &\Rightarrow
      &(\text{C{\scriptsize 1}})\\
\text{(Lemma } \ref{compoundkhrkrusk})
&     \Uparrow  &\           &\Uparrow  &\          &\dots&\          &\Uparrow  &\           &\Uparrow\\
\text{(Lemma } \ref{prop:KmKk})
&   (\text{K{\scriptsize m}})       &\Rightarrow &(\text{K{\scriptsize m-1}})   &\Rightarrow&\dots&\Rightarrow&(\text{K{\scriptsize 2}})       &\Rightarrow
      &(\text{K{\scriptsize 1}})
\end{matrix}\label{maindiagramintro}
\end{equation}
as well as (Lemma \ref{compoundwmwk})
\begin{equation}
\text{if }\min(k_{\mathbf A}, k_{\mathbf B})\geq m-1 \text{, then}\
(\text{W{\scriptsize m}})       \Rightarrow
         (\text{W{\scriptsize m-1}})  \Rightarrow          \dots\ \Rightarrow           (\text{W{\scriptsize 2}}) \Rightarrow(\text{W{\scriptsize 1}}).
         \label{eq1.14}
\end{equation}
We also show in Lemmas \ref{prop:HmHk},
\ref{KmHm}--\ref{HmUm} that \eqref{maindiagramintro} remains valid after replacing conditions
$(\textup{C{\scriptsize m}})$,\dots,$(\textup{C{\scriptsize 1}})$ and equivalence
$(\textup{C{\scriptsize 1}})\Leftrightarrow (\textup{U{\scriptsize 1}})$ by
conditions $(\textup{H{\scriptsize m}})$,\dots,$(\textup{H{\scriptsize 1}})$ and implication
$(\textup{H{\scriptsize 1}})\Rightarrow (\textup{U{\scriptsize 1}})$, respectively.

Equivalence of $(\text{C{\scriptsize 1}})$ and $(\text{U{\scriptsize 1}})$ is trivial, since the two conditions are the same. The implications $(\textup{K{\scriptsize 2}}) \ \Rightarrow (\textup{C{\scriptsize 2}}) \ \Rightarrow (\textup{U{\scriptsize 2}})$ already appeared in \eqref{manyimplications2}. The implication $(\textup{K{\scriptsize 1}}) \ \Rightarrow (\textup{C{\scriptsize 1}})$ was given in Lemma \ref{lemmaMiron1}, and the implications
$(\textup{K{\scriptsize m}}) \Rightarrow (\textup{H{\scriptsize m}}) \Rightarrow (\textup{U{\scriptsize m}})$ are implicitly contained in \cite{Kruskal1977}.
From the definition of conditions $(\text{K{\scriptsize m}})$  and $(\text{H{\scriptsize m}})$
it follows that   $r_{\mathbf A}+r_{\mathbf B}\geq R+m$. On the other hand, condition $(\text{C{\scriptsize m}})$
may hold for $r_{\mathbf A}+r_{\mathbf B}< R+m$. We do not know examples where
$(\text{H{\scriptsize m}})$ holds, but $(\text{C{\scriptsize m}})$ does not. We suggest that $(\text{H{\scriptsize m}})$
always implies  $(\text{C{\scriptsize m}})$.

In \S \ref{Section4} we present a number of
 results establishing the uniqueness of one factor matrix under various
 hypotheses including at least one of the conditions $(\textup{K{\scriptsize m}})$, $(\textup{H{\scriptsize m}})$, $(\textup{C{\scriptsize m}})$, $(\textup{U{\scriptsize m}})$ and $\text{\textup{(W{\scriptsize m})}}$.
 The results of this section can be summarized as:\\ if $k_{\mathbf C}\geq 1$ and $m=m_{\mathbf C}:=R-r_{\mathbf C}+2$, then
\begin{equation}\label{manyimplicationsm}
\begin{aligned}
&
\begin{tikzcd}[column sep=tiny,row sep=tiny]
& & (\textup{C{\scriptsize m}}) \arrow[Rightarrow]{dr} & \\
\eqref{unique_one_new_paper}\ \Leftrightarrow & (\textup{K{\scriptsize m}}) \arrow[Rightarrow]{ur}\arrow[Rightarrow]{dr} & & (\textup{U{\scriptsize m}})\\
& & (\textup{H{\scriptsize m}}) \arrow[Rightarrow]{ur} &
\end{tikzcd}
\Rightarrow \
\begin{cases}
\mathbf A\odot\mathbf B \text{ has full column rank},\\
(\textup{W{\scriptsize m}}),\\
\min(k_{\mathbf A},k_{\mathbf B})\geq m-1
\end{cases}
\\
&\Rightarrow \
\begin{cases}
\mathbf A\odot\mathbf B \text{ has full column rank},\\
(\textup{W{\scriptsize m}}),(\textup{W{\scriptsize m-1}}),\dots,(\textup{W{\scriptsize 1}})
\end{cases}
 \\
&\Rightarrow\ \  r_{\mathcal T}=R \ \text{ and the third factor  matrix of}\ \mathcal T=[\mathbf A,\mathbf B,\mathbf C]_R\ \text{ is unique}.
\end{aligned}
\end{equation}
Thus, Theorems \ref{theorem_uni-mode}--\ref{theoremKruskalnew3} are
  implied by the more general statement \eqref{manyimplicationsm}, which therefore provides new,
   more relaxed sufficient conditions for uniqueness of one factor matrix.

Further, compare \eqref{manyimplicationsm} to \eqref{manyimplications2}.
For the case $r_{\mathbf C} = R$, i.e., $m =2$, uniqueness of the overall CPD has been established in Theorem \ref{th:1.16}.
Actually, in this case overall CPD uniqueness follows easily from uniqueness of ${\mathbf C}$.

In \S \ref{Section5} we simplify the proof of Theorem \ref{th:1.16} using the material we have developed so far.
In Part II \cite{partII} we will use \eqref{manyimplicationsm} to generalize \eqref{manyimplications2} to cases where possibly $r_{\mathbf C} < R$, i.e., $m > 2$.

\section{Compound matrices and their properties} \label{Preliminaries}
In this section we define compound matrices and present several of their properties. The material will be heavily used in the following sections.

Let
\begin{equation}
 S_n^k:=\{(i_1, \dots, i_{k}): 1\leq i_1<\dots <i_k\leq n\}\label{eqS_n^k}
\end{equation}
  denote the set of all $k$ combinations of the set $\{1, \dots, n\}$.
   We assume that the elements of
    $S_n^k$  are ordered lexicographically.
     Since the elements of $S_n^k$ can be indexed from $1$ up to $C^{k}_n$, there exists an order preserving bijection
   \begin{equation}\label{sigmabijection}
   \sigma_{n,k}:\{1,2, \dots, C^{k}_n\}\rightarrow S_{n}^k=\{S_n^k(1), S_n^k(2), \dots, S_n^k(C^{k}_n)\}.
   \end{equation}
   In the sequel we will both use indices taking values in $\{1,2, \dots, C^{k}_n\}$ and multi-indices taking values in $S_{n}^k$. The connection between both is given by \eqref{sigmabijection}.

To distinguish between vectors from $\mathbb F^R$  and $\mathbb F^{C_n^k}$ we will use  the subscript $S_{n}^k$, which will
 also indicate  that the  vector entries are enumerated by means of $S_{n}^k$.
For instance, throughout the paper the vectors $\mathbf d\in \mathbb F^R$ and ${\mathbf d}_{S_R^m}\in \mathbb F^{C_R^m}$ are always defined by
\begin{align}
\mathbf d=&\left[\begin{matrix}d_1&d_2&\dots&d_R\end{matrix}\right]\in\mathbb F^R,\nonumber\\
{\mathbf d}_{S_R^m}=&\left[\begin{matrix}d_{(1,\dots, m)}&\dots& d_{(j_1,\dots, j_m)}&\dots& d_{(R-m+1,\dots, R)}\end{matrix}\right]^T\in\mathbb F^{C^m_R}.
 \label{eqd_big}
\end{align}
Note that if  $d_{(i_1,\dots, i_m)}=d_{i_1}\cdots d_{i_m}$ for all indices $i_1,\dots,i_m$, then  the vector
${\mathbf d}_{S_R^m}$ is equal to the vector $\hatdSmR{m}{R}$ defined in \eqref{eqd_big_product}.

Thus,  ${\mathbf d}_{S_R^1}=\hatdSmR{1}{R}=\mathbf d$.

\begin{definition}\label{defcompound} \cite{HornJohnson}
Let $\mathbf A\in \mathbb F^{m\times n}$ and $k\leq\min(m,n)$. Denote by\\
$\mathbf A (S_m^k(i), S_m^k(j))$ the submatrix
at the intersection of the $k$ rows with row numbers $S_m^k(i)$ and the $k$ columns with column numbers $S_m^k(j)$.
The $C_m^k$-by-$C_n^k$ matrix whose $(i, j)$ entry is $\det\mathbf A (S_m^k(i), S_n^k(j))$ is
called the $k$-th compound matrix of $\mathbf A$ and is denoted by $\mathcal C_k(\mathbf A)$.
\end{definition}

\begin{example}\label{Example2.2}
Let
$$
\mathbf A=
\left[
\begin{matrix}
a_1&1&0&0\\
a_2&0&1&0\\
a_3&0&0&1
\end{matrix}
\right].
$$
Then
\begin{align*}
\mathcal C_2(\mathbf A) =& \left[
\begin{matrix}
\mathcal C_2(\mathbf A)_{1}&\mathcal C_2(\mathbf A)_{2}&\mathcal C_2(\mathbf A)_{3}&
\mathcal C_2(\mathbf A)_{4}&\mathcal C_2(\mathbf A)_{5}&\mathcal C_2(\mathbf A)_{6}
\end{matrix}
\right]
\\
=& \left[
\begin{matrix}
\mathcal C_2(\mathbf A)_{(1,2)}&\mathcal C_2(\mathbf A)_{(1,3)}&\mathcal C_2(\mathbf A)_{(1,4)}&
\mathcal C_2(\mathbf A)_{(2,3)}&\mathcal C_2(\mathbf A)_{(2,4)}&\mathcal C_2(\mathbf A)_{(3,4)}
\end{matrix}
\right]\\
=&\bordermatrix[{[]}]{%
&(1,2)&(1,3)&(1,4)&(2,3)&(2,4)&(3,4)\cr
(1,2)&
\Big|
\begin{matrix}
    a_1& 1\\
    a_2& 0
    \end{matrix}
\Big|
&
\Big|
\begin{matrix}
    a_1& 0\\
    a_2& 1
    \end{matrix}
\Big|
&
\Big|
\begin{matrix}
    a_1& 0\\
    a_2& 0
    \end{matrix}
\Big|
&
\Big|
\begin{matrix}
    1& 0\\
    0& 1
    \end{matrix}
\Big|
&
\Big|
\begin{matrix}
    1& 0\\
    0& 0
    \end{matrix}
\Big|
&
\Big|
\begin{matrix}
    0& 0\\
    1& 0
    \end{matrix}
\Big|
\cr
(1,3)&
\Big|
\begin{matrix}
    a_1& 1\\
    a_3& 0
    \end{matrix}
\Big|
&
\Big|
\begin{matrix}
    a_1& 0\\
    a_3& 0
    \end{matrix}
\Big|
&
\Big|
\begin{matrix}
    a_1& 0\\
    a_3& 1
    \end{matrix}
\Big|
&
\Big|
\begin{matrix}
    1& 0\\
    0& 0
    \end{matrix}
\Big|
&
\Big|
\begin{matrix}
    1& 0\\
    0& 1
    \end{matrix}
\Big|
&
\Big|
\begin{matrix}
    0& 0\\
    0& 1
    \end{matrix}
\Big|
\cr
(2,3)&
\Big|
\begin{matrix}
    a_2& 0\\
    a_3& 0
    \end{matrix}
\Big|
&
\Big|
\begin{matrix}
    a_2& 1\\
    a_3& 0
    \end{matrix}
\Big|
&
\Big|
\begin{matrix}
    a_2& 0\\
    a_3& 1
    \end{matrix}
\Big|
&
\Big|
\begin{matrix}
    0& 1\\
    0& 0
    \end{matrix}
\Big|
&
\Big|
\begin{matrix}
    0& 0\\
    0& 1
    \end{matrix}
\Big|
&
\Big|
\begin{matrix}
    1& 0\\
    0& 0
    \end{matrix}
\Big|
}\\
=& \left[
\begin{array}{rrrrrr}
-a_2& a_1&0&1&0&0\\
-a_3&0&a_1&0&1&0\\
0&-a_3&a_2&0&0&1
\end{array}
\right].
\end{align*}
\end{example}
Definition \ref{defcompound} immediately implies the following lemma.
\begin{lemma} \label{compoundprop1}
Let $\mathbf A\in\mathbb F^{I\times R}$ and $k\leq \min(I,R)$. Then
\begin{remunerate}
\item
$\mathcal C_1(\mathbf A)=\mathbf A$;
\item
If $I=R$, then $\mathcal C_R(\mathbf A)=\det(\mathbf A)$;
\item
 $\mathcal C_k(\mathbf A)$ has one or more zero columns if and only if
$k > k_{\mathbf A}$;
\item
$\mathcal C_k(\mathbf A)$ is equal to the zero matrix if and only if
$k > r_{\mathbf A}$.
\end{remunerate}
\end{lemma}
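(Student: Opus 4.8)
The plan is to read all four assertions directly off Definition \ref{defcompound}, using nothing beyond the fact that a $1\times1$ determinant equals its single entry, together with the two standard minor-based characterizations of rank and $k$-rank. No machinery is required; the entire content lies in translating ``a column of $\mathcal C_k(\mathbf A)$ vanishes'' and ``$\mathcal C_k(\mathbf A)$ vanishes'' into statements about the columns and submatrices of $\mathbf A$.

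For the first two claims I would simply specialize the index ranges. When $k=1$ the only one-element index sets are the singletons $S_I^1(i)=\{i\}$ and $S_R^1(j)=\{j\}$, so the $(i,j)$ entry of $\mathcal C_1(\mathbf A)$ is $\det\mathbf A(\{i\},\{j\})=a_{ij}$, whence $\mathcal C_1(\mathbf A)=\mathbf A$. When $I=R$ and $k=R$ we have $C^R_R=1$, so $\mathcal C_R(\mathbf A)$ is the $1\times1$ matrix whose single entry is $\det\mathbf A(\{1,\dots,R\},\{1,\dots,R\})=\det(\mathbf A)$.

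For the zero-column claim I would note that the $j$-th column of $\mathcal C_k(\mathbf A)$ consists exactly of the $k\times k$ minors built from the $k$ columns of $\mathbf A$ indexed by $S_R^k(j)$; this column is zero iff all those minors vanish, i.e.\ iff those $k$ columns of $\mathbf A$ are linearly dependent. Hence $\mathcal C_k(\mathbf A)$ has a zero column iff some set of $k$ columns of $\mathbf A$ is linearly dependent. If $k\leq k_{\mathbf A}$, Definition \ref{defkrank} guarantees every $k$-subset of columns is independent, so no column vanishes; if $k>k_{\mathbf A}$, some set of $k_{\mathbf A}+1$ columns is dependent, and adjoining further columns keeps it dependent, yielding a zero column. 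For the zero-matrix claim I would invoke the classical fact that $r_{\mathbf A}$ is the largest order of a nonvanishing minor of $\mathbf A$: every $k\times k$ minor vanishes iff $k>r_{\mathbf A}$, which is precisely $\mathcal C_k(\mathbf A)=\mzero_{C^k_I\times C^k_R}$.

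The only step needing a touch of care --- the closest thing to an obstacle --- is the forward direction of the zero-column claim: from $k>k_{\mathbf A}$ one must produce a dependent set of \emph{exactly} $k$ columns, not merely of size $k_{\mathbf A}+1$. This is settled by the monotonicity observation that any superset of a linearly dependent column set is again dependent, together with $k\leq\min(I,R)\leq R$, which ensures there are enough columns in $\mathbf A$ to enlarge the dependent set to size $k$.
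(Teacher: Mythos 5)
Your proof is correct and is exactly the elaboration the paper has in mind: the paper offers no separate argument, stating only that the lemma follows immediately from Definition \ref{defcompound}, and your unpacking of the four claims (including the careful enlargement of a dependent set of $k_{\mathbf A}+1$ columns to one of exactly $k$ columns) is the standard way to make that precise.
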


The following properties of compound matrices are well-known.
\begin{lemma}\label{LemmaCompound}\cite[p. 19--22]{HornJohnson}
Let $k$  be a positive integer and let $\mathbf A$ and $\mathbf B$ be
matrices such that $\mathbf A\mathbf B$, $\mathcal C_k(\mathbf A)$, and $\mathcal C_k(\mathbf B)$ are  defined.
Then
\begin{remunerate}
\item
$\mathcal C_k(\mathbf A\mathbf B) =\mathcal  C_k(\mathbf A)\mathcal C_k(\mathbf B)$ (Binet-Cauchy formula);
\item
If $\mathbf A$ is nonsingular square matrix,  then $\mathcal C_k(\mathbf A)^{-1} =\mathcal C_k(\mathbf A^{-1})$;
\item
$\mathcal C_k(\mathbf A^T) = (\mathcal C_k(\mathbf A))^T$;
\item
$\mathcal C_k(\mathbf I_n ) = \mathbf I_{C_n^k}$;
\item
If $\mathbf A$ is an $n\times n$ matrix, then
$\det (\mathcal C_k(\mathbf A)) = \det (\mathbf A)^{C_{n-1}^{k-1}}$ (Sylvester-Franke theorem).
\end{remunerate}
\end{lemma}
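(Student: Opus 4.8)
The plan is to dispatch (i), (iii), (iv) directly from the definition of the compound matrix together with the Cauchy--Binet expansion of a determinant, to obtain (ii) as a one-line corollary of (i) and (iv), and to reserve the real effort for the Sylvester--Franke identity (v), which I would prove by computing the spectrum of $\mathcal C_k(\mathbf A)$.

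For (i), writing $\mathbf A\in\mathbb F^{p\times q}$ and $\mathbf B\in\mathbb F^{q\times s}$, I would fix index sets $I\in S_p^k$, $J\in S_s^k$ and observe that the $(I,J)$ entry of $\mathcal C_k(\mathbf A\mathbf B)$ is the minor $\det\bigl((\mathbf A\mathbf B)(I,J)\bigr)$; the classical Cauchy--Binet formula expands this as $\sum_{L\in S_q^k}\det\bigl(\mathbf A(I,L)\bigr)\det\bigl(\mathbf B(L,J)\bigr)$, which is precisely the $(I,J)$ entry of the matrix product $\mathcal C_k(\mathbf A)\mathcal C_k(\mathbf B)$, the sum over $L$ running over the $C_q^k$ common indices. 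For (iii) I would note that the $(I,J)$ entry of $\mathcal C_k(\mathbf A^T)$ equals $\det\bigl(\mathbf A(J,I)\bigr)$, which is the $(J,I)$ entry of $\mathcal C_k(\mathbf A)$, giving the transpose. For (iv), the minor $\det\bigl(\mathbf I_n(I,J)\bigr)$ is $1$ when $I=J$ and $0$ otherwise, since any index in $I\setminus J$ contributes a zero row; hence $\mathcal C_k(\mathbf I_n)=\mathbf I_{C_n^k}$. Then (ii) follows by applying (i) to $\mathbf A\mathbf A^{-1}=\mathbf I_n$ and invoking (iv).

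For (v) the plan is as follows. Since $\det(\mathcal C_k(\mathbf A))$ and $\det(\mathbf A)^{C_{n-1}^{k-1}}$ are both polynomials in the entries of $\mathbf A$, it suffices to verify the identity on the dense set of matrices with $n$ distinct eigenvalues over $\mathbb C$; the identity then holds as a polynomial identity and so over $\mathbb F$. For such $\mathbf A$ I would take a triangularization $\mathbf A=\mathbf S\mathbf T\mathbf S^{-1}$ with $\mathbf T$ upper triangular carrying the eigenvalues $\lambda_1,\dots,\lambda_n$ on its diagonal. By (i) and (ii), $\mathcal C_k(\mathbf A)=\mathcal C_k(\mathbf S)\,\mathcal C_k(\mathbf T)\,\mathcal C_k(\mathbf S)^{-1}$ is similar to $\mathcal C_k(\mathbf T)$, so the two have equal determinant. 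A short check shows that $\mathcal C_k(\mathbf T)$ is itself upper triangular in the lexicographic order --- a minor $\det(\mathbf T(I,J))$ can be nonzero only when $i_l\le j_l$ for every $l$, which for $I\neq J$ forces $I$ to precede $J$ lexicographically --- and that its diagonal entries are the principal minors $\det(\mathbf T(I,I))=\prod_{i\in I}\lambda_i$. Multiplying these over all $I\in S_n^k$ and counting that each fixed index $i$ lies in exactly $C_{n-1}^{k-1}$ of the sets $I$ yields
\begin{equation*}
\det(\mathcal C_k(\mathbf A))=\prod_{I\in S_n^k}\ \prod_{i\in I}\lambda_i=\Bigl(\prod_{i=1}^n\lambda_i\Bigr)^{C_{n-1}^{k-1}}=\det(\mathbf A)^{C_{n-1}^{k-1}}.
\end{equation*}

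The hard part is (v), and within it two points need care: verifying that the compound of a triangular matrix is triangular with the principal minors on its diagonal (the minor-vanishing argument above), and justifying the passage from diagonalizable matrices to an arbitrary $\mathbf A\in\mathbb F^{n\times n}$ by the polynomial-identity principle. The remaining statements are routine once Cauchy--Binet is in hand.
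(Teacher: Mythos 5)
The paper itself gives no proof of this lemma; it is quoted verbatim from Horn and Johnson, so there is no in-paper argument to compare against. Your proposal is a correct, self-contained proof and is essentially the standard one. Parts (i), (iii), (iv) are immediate from the definition plus the classical Cauchy--Binet expansion of $\det\bigl((\mathbf A\mathbf B)(I,J)\bigr)$ over the $C_q^k$ intermediate index sets, and (ii) does follow in one line from (i) and (iv) applied to $\mathbf A\mathbf A^{-1}=\mathbf I_n$. For (v), the two delicate points are handled correctly: if $i_l>j_l$ for some $l$, the submatrix $\mathbf T(I,J)$ of an upper triangular $\mathbf T$ contains a zero block of size $(k-l+1)\times l$ with $(k-l+1)+l>k$, forcing the minor to vanish, and $i_l\le j_l$ for all $l$ together with $I\ne J$ does imply that $I$ precedes $J$ lexicographically, so $\mathcal C_k(\mathbf T)$ is upper triangular with the principal minors $\prod_{i\in I}\lambda_i$ on its diagonal; the exponent count $C_{n-1}^{k-1}$ is right. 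The only mild redundancy is the density/polynomial-identity step: since every complex matrix is triangularizable, the Schur-type argument already covers all of $\mathbb C^{n\times n}$ (and hence $\mathbb R^{n\times n}$) without restricting to matrices with distinct eigenvalues --- though invoking continuity or Zariski density as you do is equally valid. An alternative route to (v) that avoids spectra entirely is to reduce to the case $\det\mathbf A\neq 0$, factor $\mathbf A$ into elementary matrices, and check the identity on each elementary factor via (i); your eigenvalue computation buys the additional information that the spectrum of $\mathcal C_k(\mathbf A)$ consists of the products $\lambda_{i_1}\cdots\lambda_{i_k}$, which is a stronger statement than the determinant identity alone.
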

We will extensively use compound matrices of diagonal matrices.
\begin{lemma}\label{Lemma2.3}
Let  $\mathbf d\in\mathbb F^R$,  $k\leq R$, and let $\hatdSmR{k}{R}$
be defined by \eqref{eqd_big_product}.
Then
\begin{remunerate}
\item
$\hatdSmR{k}{R}=\vzero$ if and only if $\omega(\mathbf d)\leq k-1$;
\item
$\hatdSmR{k}{R}$ has exactly one nonzero component if and only if $\omega(\mathbf d)=k$;
\item
$\mathcal C_k(\textup{\text{Diag}}(\mathbf d))=\textup{\text{Diag}}(\hatdSmR{k}{R})$.
\end{remunerate}
\end{lemma}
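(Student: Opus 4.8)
The plan is to prove the three equivalences/identities of Lemma~\ref{Lemma2.3} by translating statements about the vector $\hatdSmR{k}{R}$ into statements about the number $\omega(\mathbf d)$ of nonzero entries of $\mathbf d$, and then to read off the last part as a direct computation of the $k$-th compound of a diagonal matrix. First I would recall that, by its definition in~\eqref{eqd_big_product}, the entries of $\hatdSmR{k}{R}$ are exactly the products $d_{i_1}\cdots d_{i_k}$ over all index sets $(i_1,\dots,i_k)\in S_R^k$. Such a product is nonzero precisely when all $k$ of the chosen indices point to nonzero components of $\mathbf d$.

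For part~1, I would argue that $\hatdSmR{k}{R}=\vzero$ iff \emph{every} $k$-element subset of $\{1,\dots,R\}$ contains at least one index $i$ with $d_i=0$; equivalently, there is no $k$-element subset consisting entirely of nonzero-coordinate indices. Since the maximal such subset has size $\omega(\mathbf d)$, this is exactly the condition $\omega(\mathbf d)<k$, i.e. $\omega(\mathbf d)\leq k-1$. Part~2 follows the same bookkeeping: $\hatdSmR{k}{R}$ has a nonzero entry iff some $k$-subset lies in the support of $\mathbf d$, which needs $\omega(\mathbf d)\geq k$; and it has \emph{exactly one} nonzero entry iff there is a unique $k$-subset of the support, i.e. the support has exactly $k$ elements, giving $C_{\omega(\mathbf d)}^k=1$ iff $\omega(\mathbf d)=k$. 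These are purely combinatorial counting arguments and present no real obstacle.

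For part~3, I would compute $\mathcal C_k(\textup{\text{Diag}}(\mathbf d))$ directly from Definition~\ref{defcompound}. The $(i,j)$ entry is the determinant of the submatrix of $\textup{\text{Diag}}(\mathbf d)$ with rows indexed by $S_R^k(i)$ and columns by $S_R^k(j)$. Because $\textup{\text{Diag}}(\mathbf d)$ is diagonal, a $k\times k$ submatrix picking rows $\{i_1,\dots,i_k\}$ and columns $\{j_1,\dots,j_k\}$ is itself diagonal when the two index sets coincide (its determinant being $d_{i_1}\cdots d_{i_k}$) and has an all-zero row or column otherwise (so its determinant vanishes). Hence $\mathcal C_k(\textup{\text{Diag}}(\mathbf d))$ is diagonal, and its diagonal entry at position $(i_1,\dots,i_k)$ equals $d_{i_1}\cdots d_{i_k}$, which is exactly the corresponding entry of $\hatdSmR{k}{R}$. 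This yields $\mathcal C_k(\textup{\text{Diag}}(\mathbf d))=\textup{\text{Diag}}(\hatdSmR{k}{R})$.

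If anything requires care, it is only the lexicographic bookkeeping underlying the identification of the off-diagonal entries as zero: I should make explicit that a square submatrix of a diagonal matrix is nonsingular only when its row and column index sets agree, which follows from the observation that otherwise some selected row (or column) of the submatrix is identically zero. Once this is noted, part~3 is immediate, and since part~3 expresses the diagonal of $\mathcal C_k(\textup{\text{Diag}}(\mathbf d))$ as $\hatdSmR{k}{R}$, parts~1 and~2 can alternatively be phrased as statements about the rank of this diagonal compound matrix, tying the three items together.
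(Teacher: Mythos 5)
Your proof is correct; the paper in fact states Lemma \ref{Lemma2.3} without any proof, treating all three items as immediate consequences of the definitions \eqref{eqd_big_product} and \ref{defcompound}, and your argument is exactly the direct verification the authors leave to the reader. In particular, your key observations --- that a product $d_{i_1}\cdots d_{i_k}$ is nonzero precisely when the index set lies in the support of $\mathbf d$ (giving parts~1 and~2 via the count $C_{\omega(\mathbf d)}^k$), and that a $k\times k$ submatrix of $\textup{\text{Diag}}(\mathbf d)$ with distinct row and column index sets has a zero row and hence vanishing determinant (giving part~3) --- are the right ones and fill the gap cleanly.
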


\begin{example}
Let $\mathbf d=\left[\begin{matrix}d_1&d_2&d_3&d_4\end{matrix}\right]^T$ and $\mathbf D=\textup{\text{Diag}}(\mathbf d)$. Then
\begin{align*}
\mathcal C_2(\mathbf D)=&\textup{\text{Diag}}(\left[\begin{matrix}d_1d_2& d_1d_3& d_1d_4& d_2d_3& d_2d_4& d_3d_4\end{matrix}\right])=\textup{\text{Diag}}(\hatdSmR{2}{4}),\\
\mathcal C_3(\mathbf D)=&\textup{\text{Diag}}(\left[\begin{matrix}d_1d_2d_3& d_1d_2d_4& d_1d_3d_4& d_2d_3d_4\end{matrix}\right])=\textup{\text{Diag}}(\hatdSmR{3}{4}).
\end{align*}
\end{example}

For vectorization of a  matrix $\mathbf T=[\mathbf t_1\ \cdots\ \mathbf t_R]$, we follow the convention that $\textup{vec}(\mathbf T)$ denotes
the column vector obtained by stacking the columns of $\mathbf T$ on top of one another, i.e.,
$$
    \textup{vec}(\mathbf T) =
    \left[
    \begin{matrix}
    \mathbf t_1^T& \dots& \mathbf t_R^T
    \end{matrix}
    \right]^T.
$$
It is clear that in vectorized form, rank-1 matrices correspond to Kronecker products of two vectors. Namely, for arbitrary vectors $\mathbf a$ and $\mathbf b$, $\textup{\text{vec}}(\mathbf b\mathbf a^T)=\mathbf a\otimes\mathbf b$. For matrices $\mathbf A$ and $\mathbf B$ that both have $R$ columns and $\mathbf d\in \mathbb F^R$, we now immediately obtain
 expressions that we will frequently use:
\begin{gather}
\textup{\text{vec}}(\mathbf B \textup{\text{Diag}}(\mathbf d) \mathbf A^T)=
\textup{\text{vec}}\left(\sum\limits_{r=1}^R\mathbf b_r  \mathbf a_r^Td_r\right)=
\sum\limits_{r=1}^R(\mathbf a_r\otimes \mathbf b_r)d_r=
(\mathbf A\odot \mathbf B)\mathbf d,\label{eqmatrtovec}\\
\mathbf A \textup{\text{Diag}}(\mathbf d) \mathbf B^T=\mzero\ \Leftrightarrow \
\mathbf B \textup{\text{Diag}}(\mathbf d) \mathbf A^T=\mzero\ \Leftrightarrow \
(\mathbf A\odot \mathbf B)\mathbf d=\vzero.\label{eqmatrtovec_b}
\end{gather}
The following generalization of property \eqref{eqmatrtovec} will be used throughout the paper.
\begin{lemma}\label{cor2.6}
Let $\mathbf A\in \mathbb F^{I\times R}$,  $\mathbf B\in \mathbb F^{J\times R}$,  $\mathbf d\in\mathbb F^R$, and $k\leq\min (I,J,R)$. Then
$$
\textup{vec}(\mathcal C_k(\mathbf B \textup{\text{Diag}}(\mathbf d) \mathbf A^T))=
[\mathcal C_k(\mathbf A)\odot \mathcal C_k(\mathbf B)]\hatdSmR{k}{R},
$$
where $\hatdSmR{k}{R}\in\mathbb F^{C^k_R}$ is defined by \eqref{eqd_big_product}.
\end{lemma}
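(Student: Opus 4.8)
The plan is to reduce the statement to the already-established scalar vectorization identity \eqref{eqmatrtovec} by pushing everything through the $k$-th compound. First I would abbreviate $\mathbf M:=\mathbf B\,\textup{\text{Diag}}(\mathbf d)\,\mathbf A^T\in\mathbb F^{J\times I}$ and note that, because $k\leq\min(I,J,R)$, every compound matrix below is defined and the products are dimensionally compatible: $\mathcal C_k(\mathbf B)$ is $C_J^k\times C_R^k$, $\mathcal C_k(\textup{\text{Diag}}(\mathbf d))$ is $C_R^k\times C_R^k$, and $\mathcal C_k(\mathbf A^T)$ is $C_R^k\times C_I^k$.

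The key step is to apply the Binet--Cauchy formula (Lemma \ref{LemmaCompound}, item 1) to the triple product $\mathbf M=\mathbf B\cdot\textup{\text{Diag}}(\mathbf d)\cdot\mathbf A^T$. Invoking it twice (associativity of ordinary matrix multiplication lets one bracket the product either way) gives
\[
\mathcal C_k(\mathbf M)=\mathcal C_k(\mathbf B)\,\mathcal C_k(\textup{\text{Diag}}(\mathbf d))\,\mathcal C_k(\mathbf A^T).
\]
I would then rewrite the last factor by the transpose rule $\mathcal C_k(\mathbf A^T)=(\mathcal C_k(\mathbf A))^T$ (Lemma \ref{LemmaCompound}, item 3) and the middle factor by $\mathcal C_k(\textup{\text{Diag}}(\mathbf d))=\textup{\text{Diag}}(\hatdSmR{k}{R})$ (Lemma \ref{Lemma2.3}, item 3), obtaining
\[
\mathcal C_k(\mathbf M)=\mathcal C_k(\mathbf B)\,\textup{\text{Diag}}(\hatdSmR{k}{R})\,(\mathcal C_k(\mathbf A))^T.
\]

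Finally I would apply the identity \eqref{eqmatrtovec} with $\mathbf A$, $\mathbf B$, $\mathbf d$ there replaced by $\mathcal C_k(\mathbf A)$, $\mathcal C_k(\mathbf B)$, $\hatdSmR{k}{R}$, respectively; this is legitimate since $\mathcal C_k(\mathbf A)$ and $\mathcal C_k(\mathbf B)$ each have exactly $C_R^k$ columns, matching the length of $\hatdSmR{k}{R}$. This yields
\[
\textup{vec}(\mathcal C_k(\mathbf M))=(\mathcal C_k(\mathbf A)\odot\mathcal C_k(\mathbf B))\,\hatdSmR{k}{R},
\]
which is exactly the claimed equality. There is no genuine obstacle: the lemma is essentially the composition of three facts already in hand---Binet--Cauchy, the transpose rule, and the diagonal-compound computation---glued together by the scalar vectorization identity. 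The only point deserving a word of care is verifying that Binet--Cauchy applies to the three-fold product, i.e.\ that all intermediate compounds exist and compose, which is guaranteed by the hypothesis $k\leq\min(I,J,R)$.
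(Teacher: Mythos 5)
Your proposal is correct and follows exactly the paper's own argument: Binet--Cauchy applied to the triple product, the transpose and diagonal-compound identities from Lemmas \ref{LemmaCompound} and \ref{Lemma2.3}, and then the vectorization identity \eqref{eqmatrtovec} applied to the compound matrices. No differences worth noting.
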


{\em Proof.}
From Lemma \ref{LemmaCompound} (1),(3) and Lemma \ref{Lemma2.3} (3)  it follows that
$$
\mathcal C_k(\mathbf B \textup{\text{Diag}}(\mathbf d) \mathbf A^T)=
\mathcal C_k(\mathbf B)\mathcal C_k( \textup{\text{Diag}}(\mathbf d))\mathcal C_k( \mathbf A^T)=
\mathcal C_k(\mathbf B) \textup{\text{Diag}}(\hatdSmR{k}{R})\mathcal C_k( \mathbf A)^T.
$$
By  \eqref{eqmatrtovec},
$$
\textup{vec}(\mathcal C_k(\mathbf B) \textup{\text{Diag}}(\hatdSmR{k}{R})\mathcal C_k( \mathbf A)^T)=
[\mathcal C_k(\mathbf A)\odot \mathcal C_k(\mathbf B)]\hatdSmR{k}{R}.\qquad\endproof
$$
The following Lemma contains an equivalent definition of condition \cond{U}{m}.
\begin{lemma}\label{lemma:equivUm}
Let $\mathbf A\in \mathbb F^{I\times R}$ and  $\mathbf B\in \mathbb F^{J\times R}$. Then the following statements are equivalent:
\begin{romannum}
\item if $\mathbf d\in\mathbb F^R$ is such that  $r_{\mathbf A \textup{\text{Diag}}(\mathbf d) \mathbf B^T}\leq m-1$,  then $\omega(\mathbf d)\leq m-1$;
\item \cond{U}{m} holds.
\end{romannum}
\end{lemma}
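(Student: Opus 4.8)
The plan is to show that statements (i) and (ii) are \emph{the same statement written in two different languages}: for each fixed $\mathbf d\in\mathbb F^R$ the hypothesis of \cond{U}{m} is equivalent to the hypothesis of (i), and the conclusion of \cond{U}{m} is equivalent to the conclusion of (i). Once these termwise dictionaries are in place, the equivalence of the two universally quantified implications is automatic, so the whole proof reduces to assembling the compound-matrix identities already developed.

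First I would rewrite the vanishing of the left-hand side of \cond{U}{m} as a rank bound. Fix $\mathbf d\in\mathbb F^R$ and apply Lemma \ref{cor2.6} with $k=m$ to get
$$
\textup{vec}\bigl(\mathcal C_m(\mathbf B \textup{\text{Diag}}(\mathbf d) \mathbf A^T)\bigr)=[\mathcal C_m(\mathbf A)\odot\mathcal C_m(\mathbf B)]\hatdSmR{m}{R}.
$$
A vectorized matrix is zero exactly when the matrix is zero, so $[\mathcal C_m(\mathbf A)\odot\mathcal C_m(\mathbf B)]\hatdSmR{m}{R}=\vzero$ if and only if $\mathcal C_m(\mathbf B \textup{\text{Diag}}(\mathbf d) \mathbf A^T)=\mzero$. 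Now $\mathbf B \textup{\text{Diag}}(\mathbf d)\mathbf A^T$ is the transpose of $\mathbf A \textup{\text{Diag}}(\mathbf d)\mathbf B^T$ and hence has the same rank, so Lemma \ref{compoundprop1}(4) applied to it gives
$$
[\mathcal C_m(\mathbf A)\odot\mathcal C_m(\mathbf B)]\hatdSmR{m}{R}=\vzero\quad\Longleftrightarrow\quad r_{\mathbf A \textup{\text{Diag}}(\mathbf d)\mathbf B^T}\leq m-1.
$$

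Second, I would translate the conclusion. By Lemma \ref{Lemma2.3}(1) we have $\hatdSmR{m}{R}=\vzero$ if and only if $\omega(\mathbf d)\leq m-1$. Substituting the two displayed equivalences into \cond{U}{m} turns the implication ``$[\mathcal C_m(\mathbf A)\odot\mathcal C_m(\mathbf B)]\hatdSmR{m}{R}=\vzero\Rightarrow\hatdSmR{m}{R}=\vzero$, valid for every $\mathbf d\in\mathbb F^R$'' into ``$r_{\mathbf A \textup{\text{Diag}}(\mathbf d)\mathbf B^T}\leq m-1\Rightarrow\omega(\mathbf d)\leq m-1$, valid for every $\mathbf d\in\mathbb F^R$'', which is precisely condition (i). This establishes (i)$\Leftrightarrow$(ii).

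I do not expect a genuine obstacle here; the lemma is bookkeeping on top of the machinery in Lemmas \ref{cor2.6}, \ref{compoundprop1}, and \ref{Lemma2.3}. The only points needing care are (a) invoking invariance of rank under transposition, so that Lemma \ref{compoundprop1}(4)—phrased for $\mathcal C_m$ of a generic matrix—applies to the transpose $\mathbf B \textup{\text{Diag}}(\mathbf d)\mathbf A^T$ that emerges from Lemma \ref{cor2.6}; and (b) carrying the quantifier ``for all $\mathbf d\in\mathbb F^R$'' intact through the substitution, so the two implications are matched as whole statements rather than being confused at the level of individual instances.
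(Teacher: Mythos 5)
Your proof is correct and follows essentially the same route as the paper's: both translate the hypothesis of \cond{U}{m} into the rank bound $r_{\mathbf A \textup{\text{Diag}}(\mathbf d)\mathbf B^T}\leq m-1$ via Lemma \ref{cor2.6} together with the fact that the $m$-th compound matrix vanishes exactly when the rank is below $m$, and then translate the conclusion via Lemma \ref{Lemma2.3}(1). Your explicit citation of Lemma \ref{compoundprop1}(4) and your remark on rank invariance under transposition only make explicit what the paper leaves implicit.
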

\begin{proof}
From the definition of the $m$-th compound matrix and Lemma \ref{cor2.6} it follows that
\begin{equation*}
\begin{split}
r_{\mathbf A \textup{\text{Diag}}(\mathbf d) \mathbf B^T}&=r_{\mathbf B \textup{\text{Diag}}(\mathbf d) \mathbf A^T}\leq m-1
\ \ \Leftrightarrow\ \
\mathcal C_m(\mathbf B \textup{\text{Diag}}(\mathbf d) \mathbf A^T)=\mzero\\
\ \ &\Leftrightarrow\ \
\text{vec}({\mathcal C_m(\mathbf B \textup{\text{Diag}}(\mathbf d) \mathbf A^T)})=\vzero
\ \ \Leftrightarrow\ \
[\mathcal C_m(\mathbf A)\odot \mathcal C_m(\mathbf B)]\hatdSmR{m}{R}=\vzero.
\end{split}
\end{equation*}
Now the result follows from Lemma \ref{Lemma2.3} (1).
\end{proof}

The following three auxiliary  lemmas will be used in \S \ref{Section3}.
\begin{lemma}\label{lemmaU_mKhR}
Consider $\mathbf A\in \mathbb F^{I\times R}$ and  $\mathbf B\in \mathbb F^{J\times R}$ and let condition
$\text{\textup{(U{\scriptsize m})}}$ hold. Then $\min (k_{\mathbf A}, k_{\mathbf B})\geq m$.
\end{lemma}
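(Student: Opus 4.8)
The plan is to argue by contraposition: I will show that if $k_{\mathbf A}<m$ or $k_{\mathbf B}<m$, then condition \cond{U}{m} must fail. By the symmetric roles of the two factors in the Khatri-Rao product it suffices to treat $k_{\mathbf A}$; the argument for $k_{\mathbf B}$ is word-for-word the same with $\mathbf A$ and $\mathbf B$ interchanged. So assume $k_{\mathbf A}\leq m-1$, i.e.\ $m>k_{\mathbf A}$, and I will construct a vector $\mathbf d\in\mathbb F^R$ that witnesses the failure of \cond{U}{m}.

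The first step is to locate a vanishing column of $\mathcal C_m(\mathbf A)$. Since $m>k_{\mathbf A}$, Lemma \ref{compoundprop1}(3) guarantees that $\mathcal C_m(\mathbf A)$ has at least one zero column; let its index be $(i_1,\dots,i_m)\in S_R^m$. (This simply reflects that the columns $\mathbf a_{i_1},\dots,\mathbf a_{i_m}$ of $\mathbf A$ are linearly dependent, so every $m\times m$ minor built from these columns vanishes.) The second step transfers this to the Khatri-Rao product: the columns of $\mathcal C_m(\mathbf A)\odot\mathcal C_m(\mathbf B)$ are indexed by $S_R^m$, and the one indexed by $(i_1,\dots,i_m)$ is the Kronecker product of the corresponding columns of $\mathcal C_m(\mathbf A)$ and $\mathcal C_m(\mathbf B)$. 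Because $\vzero\otimes\mathbf v=\vzero$ for any $\mathbf v$, the vanishing of the $(i_1,\dots,i_m)$-th column of $\mathcal C_m(\mathbf A)$ forces the $(i_1,\dots,i_m)$-th column of $\mathcal C_m(\mathbf A)\odot\mathcal C_m(\mathbf B)$ to be zero as well.

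Finally, choose $\mathbf d\in\mathbb F^R$ with nonzero entries exactly in the positions $i_1,\dots,i_m$ and zeros elsewhere, so that $\omega(\mathbf d)=m$. By Lemma \ref{Lemma2.3}(2), the vector $\hatdSmR{m}{R}$ then has a single nonzero component, located precisely at the index $(i_1,\dots,i_m)$; in particular $\hatdSmR{m}{R}\neq\vzero$. Consequently $(\mathcal C_m(\mathbf A)\odot\mathcal C_m(\mathbf B))\hatdSmR{m}{R}$ is a nonzero scalar multiple of the $(i_1,\dots,i_m)$-th column of the Khatri-Rao product, hence equals $\vzero$. Thus $\mathbf d$ satisfies the hypothesis of \cond{U}{m} while violating its conclusion, proving the contrapositive and therefore $k_{\mathbf A}\geq m$; the same reasoning applied to $\mathbf B$ gives $k_{\mathbf B}\geq m$. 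There is no substantive obstacle here: the only point demanding care is the bookkeeping that the single surviving entry of $\hatdSmR{m}{R}$ sits at exactly the index of the vanishing column, which is ensured by the common lexicographic enumeration of $S_R^m$ shared by $\mathcal C_m(\cdot)$, the Khatri-Rao product, and $\hatdSmR{m}{R}$.
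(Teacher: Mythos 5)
Your proof is correct and follows essentially the same route as the paper's: both argue by contraposition, place the support of $\mathbf d$ on a set of $m$ indices whose corresponding columns of $\mathbf A$ (or $\mathbf B$) are linearly dependent, observe that $\hatdSmR{m}{R}$ then has exactly one nonzero entry sitting at the index of a vanishing column of $\mathcal C_m(\mathbf A)\odot\mathcal C_m(\mathbf B)$, and conclude that \cond{U}{m} fails. The only cosmetic difference is that you split into the cases $k_{\mathbf A}<m$ and $k_{\mathbf B}<m$ by symmetry, while the paper treats both dependence possibilities in a single argument.
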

\begin{proof}
We prove equivalently that if $\min (k_{\mathbf A}, k_{\mathbf B})\geq m$ does not hold, then $\text{\textup{(U{\scriptsize m})}}$ does not hold. Hence, we start by assuming that $\min (k_{\mathbf A}, k_{\mathbf B})=k<m$, which implies that there exist indices $i_1, \dots, i_m$ such that the vectors $\mathbf a_{i_1}, \dots, \mathbf a_{i_m}$  or the vectors $\mathbf b_{i_1}, \dots, \mathbf b_{i_m}$ are linearly dependent.
Let
$$
\mathbf d:=\left[\begin{matrix}d_1&\dots&d_R\end{matrix}\right]^T, \qquad
d_i:=
\begin{cases}
1, &i\in\{i_1, \dots, i_m\};\\
0, &i\not\in\{i_1, \dots, i_m\},
\end{cases}
$$
 and let $\hatdSmR{m}{R}\in\mathbb F^{C^m_R}$ be given by
\eqref{eqd_big_product}.
Because of the way $\mathbf d$ is defined, $\hatdSmR{m}{R}$ has exactly one nonzero entry, namely $ d_{i_1}\cdots d_{i_m}$. We now have
$$
(\mathcal C_{m}(\mathbf A)\odot \mathcal C_m(\mathbf B))\hatdSmR{m}{R}=
\mathcal C_{m}(\left[\begin{matrix}\mathbf a_{i_1}&\dots&\mathbf a_{i_m}\end{matrix}\right])\odot
 \mathcal C_{m}(\left[\begin{matrix}\mathbf b_{i_1}&\dots&\mathbf b_{i_m}\end{matrix}\right])d_{i_1}\cdots d_{i_m}=\vzero,
$$
in which the latter equality holds because of the assumed linear dependence of $\mathbf a_{i_1}$, $\dots$, $\mathbf a_{i_m}$  or $\mathbf b_{i_1}$, $\dots$, $\mathbf b_{i_m}$. We conclude that condition $\text{(U{\scriptsize m})}$ does not hold.
\qquad\end{proof}
\begin{lemma}\label{compound1}
Let $m\leq I$. Then there exists a linear mapping $\Phi^{I,m}: \mathbb F^I\rightarrow \mathbb F^{C_I^{m}\times C_I^{m-1}}$ such that
\begin{equation}\label{eq3.4}
\mathcal C_{m}([\mathbf A\ \mathbf x])=\Phi^{I,m}(\mathbf x)\mathcal C_{m-1}(\mathbf A) \quad \text{ for all }\ \mathbf A\in \mathbb F^{I\times (m-1)}\
 \text{ and  for all }\ \mathbf x\in \mathbb F^I.
\end{equation}
\end{lemma}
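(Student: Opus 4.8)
The plan is to prove the identity entrywise via the Laplace (cofactor) expansion of an $m\times m$ determinant along its last column, and then to read off the matrix $\Phi^{I,m}(\mathbf x)$ directly from the coefficients that appear. First I would record the dimensions. Since $[\mathbf A\ \mathbf x]\in\mathbb F^{I\times m}$, its $m$-th compound matrix $\mathcal C_m([\mathbf A\ \mathbf x])$ is a column vector in $\mathbb F^{C_I^m}$ whose entry indexed by $(i_1,\dots,i_m)\in S_I^m$ is the determinant of the $m\times m$ submatrix of $[\mathbf A\ \mathbf x]$ formed by rows $i_1<\dots<i_m$. This submatrix has its first $m-1$ columns taken from the corresponding rows of $\mathbf A$ and its last column equal to $[\,x_{i_1}\ \dots\ x_{i_m}\,]^T$.

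The key step is to expand this determinant along the last column:
\[
\det\bigl([\mathbf A\ \mathbf x](S_I^m(\cdot),\{1,\dots,m\})\bigr)
=\sum_{t=1}^m (-1)^{t+m}\, x_{i_t}\,
\det\bigl(\mathbf A(\{i_1,\dots,\widehat{i_t},\dots,i_m\},\{1,\dots,m-1\})\bigr),
\]
where $\widehat{i_t}$ denotes omission of the index $i_t$. Because the remaining row indices stay in increasing order after deleting $i_t$, each determinant on the right is exactly the entry of $\mathcal C_{m-1}(\mathbf A)$ indexed by the $(m-1)$-subset $(i_1,\dots,\widehat{i_t},\dots,i_m)\in S_I^{m-1}$. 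The crucial observation is that the coefficients $(-1)^{t+m}x_{i_t}$ depend only on $\mathbf x$ and on the index sets involved, and not at all on $\mathbf A$; this is precisely what makes the desired factorization possible.

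Next I would define $\Phi^{I,m}(\mathbf x)$ to be the $C_I^m\times C_I^{m-1}$ matrix whose row indexed by $(i_1,\dots,i_m)\in S_I^m$ and column indexed by $J\in S_I^{m-1}$ has entry $(-1)^{t+m}x_{i_t}$ when $J=(i_1,\dots,\widehat{i_t},\dots,i_m)$ for some $t$ (that is, $J$ arises from the row index set by deleting $i_t$), and entry $0$ otherwise. Each row then has exactly $m$ nonzero entries. By construction, the cofactor expansion above is, read column by column in $\mathcal C_{m-1}(\mathbf A)$, nothing but the matrix--vector identity $\mathcal C_m([\mathbf A\ \mathbf x])=\Phi^{I,m}(\mathbf x)\,\mathcal C_{m-1}(\mathbf A)$, valid for every $\mathbf A\in\mathbb F^{I\times(m-1)}$. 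Finally, since each entry of $\Phi^{I,m}(\mathbf x)$ is a single component of $\mathbf x$ up to sign, the map $\mathbf x\mapsto\Phi^{I,m}(\mathbf x)$ is linear and homogeneous, as required.

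The argument is essentially bookkeeping, so the main obstacle is purely organizational: one must track the cofactor signs $(-1)^{t+m}$ consistently with the lexicographic ordering of $S_I^m$ and $S_I^{m-1}$, so that the position $t$ of the deleted index within the sorted $m$-tuple is matched correctly with the retained $(m-1)$-subset. A way to sidestep any fragile sign chasing is to define $\Phi^{I,m}$ abstractly through its action, and then verify the identity \eqref{eq3.4} by checking it on a spanning set of matrices $\mathbf A$ (for instance those whose $(m-1)$-subsets of rows are standard basis configurations); but the explicit entrywise description above is the most transparent and already establishes linearity immediately.
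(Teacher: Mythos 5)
Your proof is correct and follows essentially the same route as the paper: expand each $m\times m$ minor of $[\mathbf A\ \mathbf x]$ along its last column via the Laplace expansion, observe that the resulting coefficients $(-1)^{t+m}x_{i_t}$ are independent of $\mathbf A$, and assemble them into the rows of $\Phi^{I,m}(\mathbf x)$ indexed by $S_I^m$ with columns indexed by the $(m-1)$-subsets obtained by deleting one row index. The only cosmetic difference is that the paper first treats the square case $m=I$ and then applies it row-subset by row-subset, whereas you give the entrywise expansion in one pass.
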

{\em Proof.}
Since $[\mathbf A\ \mathbf x]$ has $m$ columns, $\mathcal C_{m}([\mathbf A\ \mathbf x])$ is a vector that contains the determinants of the matrices formed by $m$ rows. Each of these determinants can be expanded  along its last column, yielding linear combinations of $(m-1) \times (m-1)$ minors, the combination coefficients being equal to entries of $\mathbf x$, possibly up to the sign. Overall, the expansion can be written in the form (\ref{eq3.4}), in which $\Phi^{I,m}(\mathbf x)$ is a $C_I^{m}\times C_I^{m-1}$ matrix, the nonzero entries of which are equal to entries of $\mathbf x$, possibly up to the sign.

More in detail, we have the following.
\begin{romannum}
\item
Let $\widehat{\mathbf A}\in \mathbb F^{m\times (m-1)}$,  $\widehat{\mathbf x}\in \mathbb F^{m}$.
By the Laplace expansion theorem \cite[p. 7]{HornJohnson},
$$
\mathcal C_{m}([\widehat{\mathbf A}\ \widehat{\mathbf x}]) = \det([\widehat{\mathbf A}\ \widehat{\mathbf x}])=
\left[
\begin{matrix}
\widehat{x}_{m}& -\widehat{x}_{m-1}& \widehat{x}_{m-2}&\dots &(-1)^{m-1}\widehat{x}_1
\end{matrix}
\right]\mathcal C_{m-1}(\widehat{\mathbf A}).
$$
Hence,  Lemma \ref{compound1} holds  for $m=I$  with
$$
\Phi^{m,m}(\mathbf x) = \left[\begin{matrix}x_{m}& -x_{m-1}& x_{m-2}&\dots &(-1)^{m-1}x_1\end{matrix}\right].
$$
\item Let $m<I$.
Since, $\mathcal C_{m}([\mathbf A\ \mathbf x])=\left[\begin{matrix}d_1&\dots&d_{C^{m}_I}\end{matrix}\right]^T$,  it follows from
the definition of compound matrix that   $d_i=\mathcal C_{m}([\widehat{\mathbf A}\ \widehat{\mathbf x}])$,  where
$ [\widehat{\mathbf A}\ \widehat{\mathbf x}]$ is a submatrix of $[\mathbf A\ \mathbf x]$ formed by rows with the numbers $\sigma_{I,m}(i)=S_{I}^m(i):=(i_1, \dots, i_m)$.
Let us define $\Phi_i(\mathbf x)\in \mathbb F^{1\times C^{m-1}_I}$ by
\begin{equation*}
\begin{matrix}
\Phi_i(\mathbf x)=[&0        &\dots &0      &x_{i_{m}}                          & 0     &\dots  & 0     &(-1)^{m-1}x_{i_1}                &\dots ],  \\
                   &\uparrow &\dots &\dots  &\uparrow                           &\dots  &\dots  &\dots  &\uparrow                         &\dots \ \ \\
                   &1        &\dots &\dots  &j_m &\dots  &\dots  &\dots  &j_1 &\dots \ \
\end{matrix}
\end{equation*}
where
$$
j_m:=\sigma_{I,m-1}^{-1}((i_1, \dots, i_{m-1})),\quad\dots\quad,j_1:=\sigma_{I,m-1}^{-1}((i_2, \dots, i_{m}))
$$
and $\sigma_{I,m-1}^{-1}$ is defined by \eqref{sigmabijection}.
Then by (i),
 $$
 d_i=
\mathcal C_{m}([\widehat{\mathbf A}\ \widehat{\mathbf x}]) =
\big[{x}_{i_{m}}\ \ -{x}_{i_{m-1}}\ \ {x}_{i_{m-2}}\ \ \dots\ \ (-1)^{m-1}{x}_{i_1}\big]\mathcal C_{m-1}(\widehat{\mathbf A})=
\Phi_i(\mathbf x)\mathcal C_{m-1}(\mathbf A).
$$
The proof is completed by setting
$$
\Phi^{I,m}(\mathbf x)=
\left[\begin{matrix}
\Phi_1(\mathbf x)\\
\vdots\\
\Phi_{C_{I}^m}(\mathbf x)
\end{matrix}\right].\qquad\endproof
$$
\end{romannum}
\begin{example}
Let us illustrate Lemma \ref{compound1} for $m=2$ and $I=4$. If\\
$\mathbf A=\left[\begin{matrix}a_{11}& a_{21}& a_{31}& a_{41}\end{matrix}\right]^T$, then
\begin{align*}
\mathcal C_2(\big[\mathbf A\ \ \mathbf x\big])=
&\mathcal C_2(
\left[\begin{matrix}
a_{11}&x_1\\
a_{21}&x_2\\
a_{31}&x_3\\
a_{41}&x_4
\end{matrix}\right]
)=
\left[\begin{matrix}
x_2a_{11}-x_1a_{21}\\
x_3a_{11}-x_1a_{31}\\
x_4a_{11}-x_1a_{41}\\
x_3a_{21}-x_2a_{31}\\
x_4a_{21}-x_2a_{41}\\
x_4a_{31}-x_3a_{41}
\end{matrix}\right]=
\left[\begin{matrix}
x_2  &-x_1   &0     &0\\
x_3  &0      &-x_1  &0\\
x_4  &0      &0     &-x_1\\
0    &x_3    &-x_2   &0\\
0    &x_4    &0     &-x_2\\
0    &0      &x_4   &-x_3
\end{matrix}\right]
\left[\begin{matrix}
a_{11}\\
a_{21}\\
a_{31}\\
a_{41}
\end{matrix}\right]
\\
=&\Phi^{4,2}(\mathbf x)\mathcal C_1(\mathbf A).
\end{align*}
\end{example}

\section{Basic implications}\label{Section3}

In this section we derive the implications in \eqref{maindiagramintro} and \eqref{eq1.14}. We first establish scheme \eqref{maindiagramintro} by means of Lemmas \ref{C_mU_m}, \ref{prop:C1U1}, \ref{PropositionA1}, \ref{prop:KmKk}, \ref{compoundkhr}, \ref{compoundumuk} and \ref{compoundkhrkrusk}.

\begin{lemma} \label{C_mU_m}
Let $\mathbf A\in \mathbb F^{I\times R}$,  $\mathbf B\in \mathbb F^{J\times R}$,  and $2 \leq m \leq \min(I, J)$.
Then condition \textup{(C{\scriptsize m})} implies condition \textup{(U{\scriptsize m})}.
\end{lemma}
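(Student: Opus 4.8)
The plan is to observe that condition \cond{U}{m} is nothing more than the null-space property of a full-column-rank matrix, read off for vectors of the special form $\hatdSmR{m}{R}$; since \cond{C}{m} asserts exactly that $\mathcal C_m(\mathbf A)\odot\mathcal C_m(\mathbf B)$ has full column rank, the implication reduces at once to the definition of full column rank.

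First I would record that the hypothesis $2\le m\le\min(I,J)$ guarantees that $\mathcal C_m(\mathbf A)$ and $\mathcal C_m(\mathbf B)$ are well-defined, of sizes $C_I^m\times C_R^m$ and $C_J^m\times C_R^m$ respectively, so that $\mathbf M:=\mathcal C_m(\mathbf A)\odot\mathcal C_m(\mathbf B)$ is a well-defined matrix with $C_R^m$ columns. By the definition of full column rank, \cond{C}{m} is equivalent to the statement that the columns of $\mathbf M$ are linearly independent, i.e. that $\mathbf M\mathbf y=\vzero$ forces $\mathbf y=\vzero$ for every $\mathbf y\in\mathbb F^{C_R^m}$.

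Next I would take an arbitrary $\mathbf d\in\mathbb F^R$ satisfying the premise of \cond{U}{m}, namely $\mathbf M\,\hatdSmR{m}{R}=\vzero$. Since $\hatdSmR{m}{R}\in\mathbb F^{C_R^m}$ has exactly as many entries as $\mathbf M$ has columns, both being enumerated by $S_R^m$, I may substitute $\mathbf y=\hatdSmR{m}{R}$ into the null-space property above and conclude $\hatdSmR{m}{R}=\vzero$, which is precisely the conclusion of \cond{U}{m}. This finishes the argument.

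No genuine obstacle arises here: \cond{U}{m} is merely a logical weakening of the trivial-null-space property, since full column rank excludes \emph{all} nonzero null vectors, whereas \cond{U}{m} only tests those of the form $\hatdSmR{m}{R}$. The sole point meriting a line of verification is the dimensional matching between $\hatdSmR{m}{R}$ and the columns of $\mathbf M$, which holds because both are indexed by $S_R^m$.
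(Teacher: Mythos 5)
Your argument is correct and is essentially the paper's own proof: condition \cond{C}{m} says the matrix $\mathcal C_m(\mathbf A)\odot\mathcal C_m(\mathbf B)$ has trivial kernel, so a fortiori no nonzero vector of the special form $\hatdSmR{m}{R}$ can lie in it, which is exactly \cond{U}{m}. The dimensional bookkeeping you add is harmless but not needed beyond what the definitions already guarantee.
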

\begin{proof}
Since, by \textup{(C{\scriptsize m})}, $C_m(\mathbf A)\odot \mathcal C_m(\mathbf B)$ has only the zero vector in its kernel, it does a forteriori not have an other vector in its kernel with the structure specified in \textup{(U{\scriptsize m})}.
\end{proof}

\begin{lemma} \label{prop:C1U1}
For $\mathbf A\in \mathbb F^{I\times R}$ and $\mathbf B\in \mathbb F^{J\times R}$. Then
$$
\textup{(C{\scriptsize 1})}\Leftrightarrow \textup{(U{\scriptsize 1})}\Leftrightarrow \mathbf A\odot \mathbf B \text{      has full column rank.}
$$
\end{lemma}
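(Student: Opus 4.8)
The plan is to show that, once the relevant definitions are unfolded, conditions \cond{C}{1} and \cond{U}{1} and the full-column-rank statement are literally the \emph{same} assertion about the matrix $\mathbf A\odot\mathbf B$. First I would invoke Lemma \ref{compoundprop1}(1), which gives $\mathcal C_1(\mathbf A)=\mathbf A$ and $\mathcal C_1(\mathbf B)=\mathbf B$, so that the Khatri--Rao product $\mathcal C_1(\mathbf A)\odot\mathcal C_1(\mathbf B)$ appearing in both \cond{C}{1} and \cond{U}{1} is nothing but $\mathbf A\odot\mathbf B$. Likewise, as observed just after \eqref{eqd_big}, the associated vector $\hatdSmR{1}{R}$ coincides with $\mathbf d$ itself.

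With these two substitutions the argument is immediate. Condition \cond{C}{1} reads exactly ``$\mathbf A\odot\mathbf B$ has full column rank'', which settles the second equivalence at once. For the first equivalence I would rewrite \cond{U}{1}: using $\hatdSmR{1}{R}=\mathbf d$, it states that whenever $\mathbf d\in\mathbb F^R$ satisfies $(\mathbf A\odot\mathbf B)\mathbf d=\vzero$ one necessarily has $\mathbf d=\vzero$. This is precisely the statement that the kernel of $\mathbf A\odot\mathbf B$ is trivial, and since a matrix has full column rank if and only if its kernel is $\{\vzero\}$, condition \cond{U}{1} is equivalent to ``$\mathbf A\odot\mathbf B$ has full column rank'' as well. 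Chaining the two equivalences through the common full-column-rank condition completes the proof.

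There is no genuine obstacle here: the result is purely definitional. The only point worth recording is the pair of normalizations $\mathcal C_1(\cdot)=\mathrm{id}$ and $\hatdSmR{1}{R}=\mathbf d$, which collapse the $m=1$ instances of \cond{C}{m} and \cond{U}{m} onto one and the same condition. This is exactly why, in the overview diagram \eqref{maindiagramintro}, the link between the $m=1$ column entries is drawn as a genuine equivalence rather than a one-way implication.
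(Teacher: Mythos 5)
Your argument is correct and is exactly the paper's proof, merely spelled out: the paper also reduces everything to Lemma \ref{compoundprop1}(1) (so that $\mathcal C_1(\mathbf A)\odot\mathcal C_1(\mathbf B)=\mathbf A\odot\mathbf B$) together with the identity $\hatdSmR{1}{R}=\mathbf d$, after which all three statements coincide with the trivial-kernel characterization of full column rank. Nothing is missing.
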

\begin{proof}
The proof follows trivially from Lemma \ref{compoundprop1}.1, since $\hatdSmR{1}{R}=\mathbf d$.
\end{proof}
\begin{lemma}\label{PropositionA1}
Let $\mathbf A\in \mathbb F^{I\times R}$,  $\mathbf B\in \mathbb F^{J\times R}$,
and $1\leq m\leq \min(I, J)$.
Then condition $\text{\textup{(U{\scriptsize m})}}$ implies condition \textup{(W{\scriptsize m})}
for any matrix $\mathbf C\in \mathbb F^{K\times R}$.
\end{lemma}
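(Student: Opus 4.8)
The plan is to observe that conditions \cond{U}{m} and \cond{W}{m} differ only in the domain over which the vector $\mathbf d$ is quantified: \cond{U}{m} asserts the implication $(\mathcal C_m(\mathbf A)\odot\mathcal C_m(\mathbf B))\hatdSmR{m}{R}=\vzero \Rightarrow \hatdSmR{m}{R}=\vzero$ for every $\mathbf d\in\mathbb F^R$, whereas \cond{W}{m} asserts the same implication only for those $\mathbf d$ lying in $\textup{range}(\mathbf C^T)$. Since $\mathbf C^T\in\mathbb F^{R\times K}$, its range is a linear subspace of $\mathbb F^R$, so $\textup{range}(\mathbf C^T)\subseteq\mathbb F^R$.

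First I would take an arbitrary $\mathbf d\in\textup{range}(\mathbf C^T)$ satisfying the premise $(\mathcal C_m(\mathbf A)\odot\mathcal C_m(\mathbf B))\hatdSmR{m}{R}=\vzero$ of \cond{W}{m}. Because such a $\mathbf d$ in particular belongs to $\mathbb F^R$, it satisfies the premise of \cond{U}{m} as well; invoking \cond{U}{m} then forces $\hatdSmR{m}{R}=\vzero$, which is exactly the conclusion demanded by \cond{W}{m}. Hence \cond{U}{m} implies \cond{W}{m}.

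There is essentially no obstacle here: the argument is purely a matter of restricting a universally quantified statement to a subset of its domain. The matrix $\mathbf C$ enters only through the inclusion $\textup{range}(\mathbf C^T)\subseteq\mathbb F^R$, which is precisely why the implication holds for an arbitrary $\mathbf C\in\mathbb F^{K\times R}$, with no constraint on $K$ and without any further hypothesis on $\mathbf C$.
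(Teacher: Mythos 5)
Your proposal is correct and is exactly the argument the paper has in mind: its proof simply states that the lemma ``trivially follows from the definitions'' of \textup{(U{\scriptsize m})} and \textup{(W{\scriptsize m})}, and your restriction-of-quantifier argument via $\textup{range}(\mathbf C^T)\subseteq\mathbb F^R$ is the spelled-out version of that. No issues.
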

\begin{proof}
The proof trivially follows from the definitions of conditions $\text{\textup{(U{\scriptsize m})}}$ and $\text{\textup{(W{\scriptsize m})}}$.
\end{proof}
\begin{lemma} \label{prop:KmKk}
Let $\mathbf A\in \mathbb F^{I\times R}$,  $\mathbf B\in \mathbb F^{J\times R}$,  and $1 < m\leq \min(I, J)$.
Then condition $\text{\textup{(K{\scriptsize m})}}$ implies conditions \textup{(K{\scriptsize m-1})},$\dots$,\textup{(K{\scriptsize 1})}.
\end{lemma}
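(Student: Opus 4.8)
The goal is to show that condition \cond{K}{m} implies each of \cond{K}{m-1}, \dots, \cond{K}{1}. Since these conditions are chained, it suffices to prove the single step that \cond{K}{m} implies \cond{K}{m-1}; the full chain then follows by finite downward induction on the parameter. So the plan is to fix the integer $m$ with $1 < m \leq \min(I,J)$, assume \cond{K}{m}, and derive \cond{K}{m-1}.

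Recall that \cond{K}{m} is a disjunction of two symmetric clauses. The plan is to handle the first clause
\begin{equation*}
r_{\mathbf A}+k_{\mathbf B}\geq R+m \quad\text{and}\quad k_{\mathbf A}\geq m,
\end{equation*}
and observe that the second clause is treated identically by swapping the roles of $\mathbf A$ and $\mathbf B$. Under the first clause, I would simply weaken the two inequalities. For the rank--$k$-rank inequality, note that $r_{\mathbf A}+k_{\mathbf B}\geq R+m$ trivially gives $r_{\mathbf A}+k_{\mathbf B}\geq R+(m-1)$, since $R+m > R+(m-1)$. For the $k$-rank inequality, $k_{\mathbf A}\geq m$ trivially gives $k_{\mathbf A}\geq m-1$. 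Thus the first clause of \cond{K}{m-1} holds, so \cond{K}{m-1} holds.

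The argument is almost entirely trivial once the conditions are written out: weakening each of the two strict numerical requirements by one unit is immediate, and the disjunctive structure is preserved clause-by-clause. The only point that requires a word of care is the range of validity of the compound matrices implicitly attached to these conditions; since we assume $1 < m \leq \min(I,J)$, the index $m-1$ satisfies $1 \leq m-1 < \min(I,J)$, so \cond{K}{m-1} is well-posed. I expect no genuine obstacle here: the content of the lemma is purely that the numerical inequalities defining \cond{K}{m} are monotone in $m$, and the main ``work'' is merely to verify that both clauses of the disjunction degrade gracefully under $m \mapsto m-1$ and to iterate down to $m=1$.
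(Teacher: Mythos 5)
Your proposal is correct and matches the paper's approach: the paper simply declares the proof ``Trivial,'' and the content of that triviality is exactly what you spell out --- the inequalities $r_{\mathbf A}+k_{\mathbf B}\geq R+m$ and $k_{\mathbf A}\geq m$ weaken monotonically as $m$ decreases, clause by clause of the disjunction. No gap; your write-up just makes the one-line observation explicit.
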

\begin{proof}
Trivial.
\end{proof}
\begin{lemma} \label{prop:HmHk}
Let $\mathbf A\in \mathbb F^{I\times R}$,  $\mathbf B\in \mathbb F^{J\times R}$,  and $1 < m\leq \min(I, J)$.
Then condition $\text{\textup{({H\scriptsize m})}}$ implies conditions \textup{(H{\scriptsize m-1})},$\dots$,\textup{(H{\scriptsize 1})}.
\end{lemma}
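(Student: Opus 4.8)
The plan is to exploit the fact that the quantity $H(\delta)$ occurring in the definition of condition \cond{H}{m} depends only on $\delta$ and on the matrices $\mathbf A$, $\mathbf B$, and \emph{not} on the parameter $m$. The parameter $m$ enters solely through the lower bound $\min(\delta,m)$ against which $H(\delta)$ is compared. Hence the conditions \cond{H}{1}, \cond{H}{2}, $\dots$, \cond{H}{m} all constrain one and the same family of numbers $H(1),\dots,H(R)$, and differ only in the strength of their right-hand sides. This is exactly the situation already encountered for the conditions \cond{K}{m} in Lemma \ref{prop:KmKk}.

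First I would record the elementary inequality $\min(\delta,k)\leq\min(\delta,m)$, valid for every $\delta$ and every $k$ with $1\leq k\leq m$. It follows at once by distinguishing the cases $\delta\leq k$, $k<\delta\leq m$, and $\delta>m$. In particular, for each fixed $\delta$ the bounds are nested, $\min(\delta,1)\leq\min(\delta,2)\leq\dots\leq\min(\delta,m)$. Then, fixing any $k\in\{1,\dots,m-1\}$ and assuming \cond{H}{m}, I would chain $H(\delta)\geq\min(\delta,m)\geq\min(\delta,k)$ for every $\delta=1,\dots,R$, where the first inequality is the hypothesis \cond{H}{m} and the second is the monotonicity inequality just noted. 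Since $H(\delta)\geq\min(\delta,k)$ for all $\delta$ is precisely the defining condition \cond{H}{k}, condition \cond{H}{k} holds. Letting $k$ range over $1,\dots,m-1$ delivers all the asserted implications simultaneously.

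I do not expect any genuine obstacle here: exactly as for the analogous Lemma \ref{prop:KmKk} concerning the \cond{K}{m} conditions, the implication is an immediate consequence of the monotonicity of the map $m\mapsto\min(\delta,m)$, and requires no properties of compound matrices or of the Khatri--Rao product. The only point worth stating explicitly, so that the reader is not tempted to look for something deeper, is that $H(\delta)$ is intrinsic to the pair $(\mathbf A,\mathbf B)$ and is shared across all the conditions \cond{H}{1},\dots,\cond{H}{m}.
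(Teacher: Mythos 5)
Your argument is correct and is precisely the monotonicity reasoning the paper has in mind: the paper's own proof of this lemma is simply the word ``Trivial,'' and your observation that $H(\delta)$ is independent of $m$ while $\min(\delta,k)\leq\min(\delta,m)$ for $k\leq m$ is exactly the intended justification.
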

\begin{proof}
Trivial.
\end{proof}

\begin{lemma}\label{compoundkhr}
Let $\mathbf A\in \mathbb F^{I\times R}$,  $\mathbf B\in \mathbb F^{J\times R}$,  and $1<m\leq \min(I, J)$.
Then condition $\text{\textup{(C{\scriptsize m})}}$ implies conditions \textup{(C{\scriptsize m-1})},$\dots$,\textup{(C{\scriptsize 1})}.
\end{lemma}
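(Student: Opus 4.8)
The plan is to reduce everything to the single step $\textup{(C{\scriptsize m})}\Rightarrow\textup{(C{\scriptsize m-1})}$: once this is available for every admissible index, the full chain $\textup{(C{\scriptsize m})}\Rightarrow\textup{(C{\scriptsize m-1})}\Rightarrow\dots\Rightarrow\textup{(C{\scriptsize 1})}$ follows by applying it repeatedly with the level decreased by one each time, every application being legitimate since $2\le \ell\le m\le\min(I,J)$ throughout. So I fix $m$ with $2\le m\le\min(I,J)$ and show that full column rank of $\mathcal C_m(\mathbf A)\odot\mathcal C_m(\mathbf B)$ forces full column rank of $\mathcal C_{m-1}(\mathbf A)\odot\mathcal C_{m-1}(\mathbf B)$.

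The engine is Lemma \ref{compound1}. For $K=(k_1,\dots,k_{m-1})\in S_R^{m-1}$ let $\mathbf A_K:=[\mathbf a_{k_1}\ \cdots\ \mathbf a_{k_{m-1}}]$, so that $\mathcal C_{m-1}(\mathbf A_K)$ is precisely the $K$-th column $\mathcal C_{m-1}(\mathbf A)_K$. For an index $p$, Lemma \ref{compound1} gives $\Phi^{I,m}(\mathbf a_p)\,\mathcal C_{m-1}(\mathbf A)_K=\mathcal C_m([\mathbf A_K\ \mathbf a_p])$. The observation I will lean on is that this vector vanishes when $p\in K$, because then $[\mathbf A_K\ \mathbf a_p]$ has two equal columns and all its $m\times m$ minors are zero, whereas when $p\notin K$ it equals $\pm\mathcal C_m(\mathbf A)_{K\cup\{p\}}$, the sign being the signature of the permutation that sorts $K\cup\{p\}$ and hence depending only on the index data, not on whether we work with $\mathbf A$ or with $\mathbf B$.

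Now take any $\mathbf y\in\mathbb F^{C_R^{m-1}}$ with $(\mathcal C_{m-1}(\mathbf A)\odot\mathcal C_{m-1}(\mathbf B))\mathbf y=\vzero$; the goal is $\mathbf y=\vzero$. Reshaping this kernel relation by \eqref{eqmatrtovec} turns it into the matrix identity $M:=\sum_{K}y_K\,\mathcal C_{m-1}(\mathbf B)_K\,\mathcal C_{m-1}(\mathbf A)_K^T=\mzero$. Fix an index $p$. Since $M=\mzero$, also $\Phi^{J,m}(\mathbf b_p)\,M\,\Phi^{I,m}(\mathbf a_p)^T=\mzero$. Expanding the left-hand side term by term and inserting the two facts above, every term with $p\in K$ drops out while each surviving term ($p\notin K$) becomes $y_K\,\mathcal C_m(\mathbf B)_{K\cup\{p\}}\,\mathcal C_m(\mathbf A)_{K\cup\{p\}}^T$, the two signs multiplying to $+1$. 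Writing $J=K\cup\{p\}$ this reads $\sum_{J\ni p}y_{J\setminus\{p\}}\,\mathcal C_m(\mathbf B)_J\,\mathcal C_m(\mathbf A)_J^T=\mzero$, which by \eqref{eqmatrtovec} says that the vector $\mathbf x^{(p)}\in\mathbb F^{C_R^m}$ with $x^{(p)}_J=y_{J\setminus\{p\}}$ for $p\in J$ and $x^{(p)}_J=0$ otherwise lies in the kernel of $\mathcal C_m(\mathbf A)\odot\mathcal C_m(\mathbf B)$. Condition $\textup{(C{\scriptsize m})}$ then forces $\mathbf x^{(p)}=\vzero$, i.e. $y_K=0$ for every $K\in S_R^{m-1}$ with $p\notin K$. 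As $m-1<R$, each $K$ omits at least one index, so letting $p$ range over $\{1,\dots,R\}$ yields $\mathbf y=\vzero$, which is $\textup{(C{\scriptsize m-1})}$.

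The step I expect to require the most care is the sandwiched identity $\Phi^{J,m}(\mathbf b_p)\,M\,\Phi^{I,m}(\mathbf a_p)^T=\mzero$: the whole argument hinges on the fact that using the same index $p$ on both sides simultaneously annihilates all cross terms with $p\in K$ through the repeated-column phenomenon and converts the remaining $(m-1)$-level outer products into genuine $m$-level ones, with the Laplace-expansion signs from $\mathbf A$ and from $\mathbf B$ cancelling. Checking this sign cancellation, together with the precise bookkeeping of $\Phi$ applied to full-size (rather than $(m-1)$-column) matrices, is the only non-formal point; the concluding nonvanishing of $\mathbf x^{(p)}$ is immediate once $p$ is chosen outside the support of a prescribed nonzero entry of $\mathbf y$.
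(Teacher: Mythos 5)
Your proposal is correct and follows essentially the same route as the paper: both reshape the kernel relation for $\mathcal C_{m-1}(\mathbf A)\odot\mathcal C_{m-1}(\mathbf B)$ into a matrix identity, sandwich it between $\Phi^{J,m}(\mathbf b_p)$ and $\Phi^{I,m}(\mathbf a_p)^T$ from Lemma \ref{compound1}, use the repeated-column vanishing and the sign cancellation to produce, for each $p$, a structured vector in the kernel of $\mathcal C_m(\mathbf A)\odot\mathcal C_m(\mathbf B)$, and then invoke \textup{(C{\scriptsize m})} and let $p$ range over all indices. The only cosmetic difference is that the paper runs the telescoping argument directly for each consecutive pair $k\to k-1$ rather than isolating the single step first.
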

\begin{proof}
It is sufficient to prove that $\text{\textup{(C{\scriptsize k})}}$ implies $\text{\textup{(C{\scriptsize k-1})}}$ for
$k\in\{m,m-1,\dots,2\}$.
Let us assume that there exists  a vector ${\mathbf d}_{S_R^{k-1}}\in\mathbb F^{C^{k-1}_R}$ such that
$$
\left[\mathcal C_{k-1}(\mathbf A)\odot \mathcal C_{k-1}(\mathbf B)\right]{\mathbf d}_{S_R^{k-1}}=\vzero,
$$
which, by \eqref{eqmatrtovec_b}, is equivalent with
$$
\mathcal C_{k-1}(\mathbf A)\textup{\text{Diag}}({\mathbf d}_{S_R^{k-1}}) \mathcal C_{k-1}(\mathbf B)^T=\mzero.
$$
Multiplying by matrices
$\Phi^{I,k}(\mathbf a_r)\in\mathbb F^{C_I^{k}\times C_I^{k-1}}$
and $\Phi^{J,k}(\mathbf b_r)\in\mathbb F^{C_J^{k}\times C_J^{k-1}}$,
constructed as in Lemma \ref{compound1}, we obtain
$$
\Phi^{I,k}(\mathbf a_r)\mathcal C_{k-1}(\mathbf A) \textup{\text{Diag}}({\mathbf d}_{S_R^{k-1}}) \mathcal C_{k-1}(\mathbf B)^T\Phi^{J,k}(\mathbf b_r)^T=\mzero, \quad \quad r=1,\dots,R,
$$
which, by \eqref{eqmatrtovec_b}, is equivalent with
\begin{equation}
\label{eq6}
\left[\left(\Phi^{I,k}(\mathbf a_r)\mathcal C_{k-1}(\mathbf A)\right) \odot \left( \Phi^{J,k}(\mathbf b_r)\mathcal C_{k-1}(\mathbf B)\right)\right]{\mathbf d}_{S_R^{k-1}}=\vzero, \qquad r=1, \dots, R.
\end{equation}
By \eqref{eq3.4},
\begin{equation}
\begin{split}
\Phi^{I,k}(\mathbf a_r)&\mathcal C_{k-1}([\mathbf a_{i_1}\ \dots\ \mathbf a_{i_{k-1}}])=
\mathcal C_k([\mathbf a_{i_1}\ \dots\ \mathbf a_{i_{k-1}}\ \mathbf a_r])=\\
&\begin{cases}
\vzero, &\text{if } r\in\{i_1, \dots, i_{k-1}\};\\
\pm \mathcal C_l(\mathbf A)_{[i_1, i_2, \dots, i_{k-1}, r]}, &\text{if }r\not \in\{i_1, \dots, i_{k-1}\},
\end{cases}
\end{split}
\end{equation}
where
$\mathcal C_k(\mathbf A)_{[i_1, i_2, \dots, i_{k-1}, r]}$ denotes the
$[i_1, i_2, \dots, i_{k-1}, r]$-th column of the matrix\\ $\mathcal C_k(\mathbf A)$,
in which  $[i_1, i_2, \dots, i_{k-1}, r]$ denotes an ordered $k$-tuple.
(Recall that by \eqref{sigmabijection}, the columns of $\mathcal C_k(\mathbf A)$ can be enumerated with $S^k_R$.)
Similarly,
\begin{equation}\label{eq8}
\Phi^{J,k}(\mathbf b_r)\mathcal C_{k-1}([\mathbf b_{i_1}\ \dots\ \mathbf b_{i_{k-1}}])=
\begin{cases}
\vzero, &\text{if } r\in\{i_1, \dots, i_{k-1}\};\\
\pm \mathcal C_k(\mathbf B)_{[i_1, i_2, \dots, i_{k-1}, r]}, &\text{if }r\not \in\{i_1, \dots, i_{k-1}\}.
\end{cases}
\end{equation}
Now,  equations \eqref{eq6}--\eqref{eq8} yield
\begin{equation}
\label{eqd1dm-1}
\begin{split}
\sum_{\substack{1\leq i_1<\dots<i_{k-1}\leq R\\ i_1, \dots, i_{k-1}\ne r}}
 d_{(i_1,\dots, i_{k-1})}
&\left(
\mathcal C_k(\mathbf A)_{[i_1, i_2, \dots, i_{k-1}, r]}\otimes
\mathcal C_k(\mathbf B)_{[i_1, i_2, \dots, i_{k-1}, r]}\right)=\vzero,\\
&r=1, \dots, R.
\end{split}
\end{equation}
Since $\mathcal C_k(\mathbf A)\odot \mathcal C_k(\mathbf B)$ has full column rank,  it follows that for all $r=1, \dots, R$,
$$
d_{(i_1,\dots, i_{k-1})}=0, \text{  whenever   } 1\leq i_1<\dots<i_{k-1}\leq R\text{ and } i_1, \dots, i_{k-1}\ne r.
$$
It immediately follows   that ${\mathbf d}_{S_R^{k-1}}=\vzero$. Hence, $\mathcal C_{k-1}(\mathbf A)\odot \mathcal C_{k-1}(\mathbf B)$ has  full column rank.
\end{proof}

\begin{lemma}\label{compoundumuk}
Let $\mathbf A\in \mathbb F^{I\times R}$,  $\mathbf B\in \mathbb F^{J\times R}$,  and $1<m\leq \min(I, J)$. Then condition $\text{\textup{(U{\scriptsize m})}}$ implies conditions \textup{(U{\scriptsize m-1})},$\dots$,\textup{(U{\scriptsize 1})}.
\end{lemma}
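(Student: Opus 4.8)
The plan is to reduce the statement to the single-step implication $(\textup{U{\scriptsize k}})\Rightarrow(\textup{U{\scriptsize k-1}})$ for each $k$ with $2\leq k\leq m$; chaining these yields all of $(\textup{U{\scriptsize m-1}}),\dots,(\textup{U{\scriptsize 1}})$ at once. Throughout I would work not with the compound-matrix form of the conditions but with the rank reformulation supplied by Lemma \ref{lemma:equivUm}: condition $(\textup{U{\scriptsize k}})$ holds if and only if every $\mathbf d\in\mathbb F^R$ with $r_{\mathbf A\textup{\text{Diag}}(\mathbf d)\mathbf B^T}\leq k-1$ satisfies $\omega(\mathbf d)\leq k-1$. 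This dictionary turns the whole lemma into a statement about how the support of $\mathbf d$ is controlled by the rank of $\mathbf A\textup{\text{Diag}}(\mathbf d)\mathbf B^T$, which is far easier to manipulate than the structured kernel vector $\hatdSmR{k}{R}$.

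First I would record the structural consequence of the hypothesis: assuming $(\textup{U{\scriptsize k}})$, Lemma \ref{lemmaU_mKhR} gives $\min(k_{\mathbf A},k_{\mathbf B})\geq k$, so every set of at most $k$ columns of $\mathbf A$, and of $\mathbf B$, is linearly independent. This k-rank lower bound is the crucial extra ingredient that lets one sharpen the conclusion by one unit.

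Next, to verify $(\textup{U{\scriptsize k-1}})$ in its rank form, I would take any $\mathbf d$ with $r_{\mathbf A\textup{\text{Diag}}(\mathbf d)\mathbf B^T}\leq k-2$. Since $k-2\leq k-1$, condition $(\textup{U{\scriptsize k}})$ already forces $\omega(\mathbf d)\leq k-1$, so the entire task is to exclude the boundary case $\omega(\mathbf d)=k-1$. Suppose it occurred, with support $S$ of cardinality $k-1$. Collecting the corresponding columns, $\mathbf A\textup{\text{Diag}}(\mathbf d)\mathbf B^T=\mathbf A_S\textup{\text{Diag}}(\mathbf d_S)\mathbf B_S^T$, where $\mathbf A_S$ and $\mathbf B_S$ have $k-1$ columns each and $\mathbf d_S$ has no zero entry. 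By the k-rank bound from the previous step, $\mathbf A_S$ and $\mathbf B_S$ have full column rank $k-1$ and $\textup{\text{Diag}}(\mathbf d_S)$ is nonsingular, whence the product has rank exactly $k-1$. This contradicts $r_{\mathbf A\textup{\text{Diag}}(\mathbf d)\mathbf B^T}\leq k-2$; therefore $\omega(\mathbf d)\leq k-2$, which is precisely $(\textup{U{\scriptsize k-1}})$.

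The only nonroutine point is the rank computation in the boundary case, and it rests entirely on Lemma \ref{lemmaU_mKhR}: without the guarantee that every $k-1$ columns of each factor are independent, a support of size $k-1$ could produce a product of rank as low as $k-2$, and the contradiction would evaporate. Granted that lemma, the rank of a product of a full-column-rank matrix, an invertible diagonal matrix, and a full-row-rank matrix is immediate, so I anticipate no genuine obstacle beyond invoking the k-rank estimate correctly at each level of the chain.
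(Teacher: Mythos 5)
Your proof is correct, but it takes a genuinely different route from the paper's. The paper argues by contradiction at the level of the structured kernel vectors: assuming $(\textup{U{\scriptsize k-1}})$ fails, it reuses the machinery of Lemma \ref{compoundkhr} (the maps $\Phi^{I,k}(\mathbf a_r)$, $\Phi^{J,k}(\mathbf b_r)$ from Lemma \ref{compound1}), multiplies the $r$-th resulting equation by $d_r$ and sums over $r$ to lift the level-$(k-1)$ relation $[\mathcal C_{k-1}(\mathbf A)\odot\mathcal C_{k-1}(\mathbf B)]\hatdSmR{k-1}{R}=\vzero$ to $k[\mathcal C_{k}(\mathbf A)\odot\mathcal C_{k}(\mathbf B)]\hatdSmR{k}{R}=\vzero$, applies $(\textup{U{\scriptsize k}})$, and then derives a contradiction with Lemma \ref{lemmaU_mKhR} from the fact that $\hatdSmR{k-1}{R}$ would have exactly one nonzero entry. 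You instead pass immediately to the rank reformulation of Lemma \ref{lemma:equivUm}, where the descent is almost free: the hypothesis $r_{\mathbf A \textup{\text{Diag}}(\mathbf d)\mathbf B^T}\leq k-2$ trivially implies $r_{\mathbf A \textup{\text{Diag}}(\mathbf d)\mathbf B^T}\leq k-1$, so $(\textup{U{\scriptsize k}})$ gives $\omega(\mathbf d)\leq k-1$ directly, and only the boundary case $\omega(\mathbf d)=k-1$ needs the explicit rank count $r_{\mathbf A_S\textup{\text{Diag}}(\mathbf d_S)\mathbf B_S^T}=k-1$, which is legitimate because Lemma \ref{lemmaU_mKhR} guarantees $\min(k_{\mathbf A},k_{\mathbf B})\geq k$. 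Both proofs hinge on Lemma \ref{lemmaU_mKhR} for the final contradiction, but yours dispenses entirely with the $\Phi$-map and summation apparatus; what that machinery buys the paper is reuse (it is needed anyway for Lemma \ref{compoundkhr}, where no rank reformulation of $(\textup{C{\scriptsize m}})$ is available), whereas your argument is more elementary and makes transparent why the implication chain for $(\textup{U{\scriptsize m}})$ is essentially a monotonicity statement about ranks.
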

\begin{proof}
It is sufficient to prove that $\text{\textup{(U{\scriptsize k})}}$ implies $\text{\textup{(U{\scriptsize k-1})}}$ for  $k\in\{m,m-1,\dots,2\}$.
 Assume to the contrary that $\text{\textup{(U{\scriptsize k-1})}}$ does not hold.
Then there exists a nonzero vector $\hatdSmR{k-1}{R}$
 such that
$\left[\mathcal C_{k-1}(\mathbf A)\odot \mathcal C_{k-1}(\mathbf B)\right]\hatdSmR{k-1}{R}=\vzero$.
Analogous to the  proof of  Lemma \ref{compoundkhr} we obtain that \eqref{eqd1dm-1} holds with
\begin{equation}\label{eqdi}
d_{(i_1,\dots, i_{k-1})}=d_{i_1}\cdots d_{i_{k-1}}, \qquad (i_1, \dots, i_{k-1})\in S^{k-1}_R.
\end{equation}
Thus,  multiplying the $r$-th equation from \eqref{eqd1dm-1} by $d_r$,
 for $1\le r\le R$, we obtain
\begin{equation}
\label{eq33}
\sum_{\substack{1\leq i_1<\dots<i_{k-1}\leq R\\ i_1, \dots, i_{k-1}\ne r}}
 d_{i_1}\cdots d_{i_{k-1}}d_r
\left(
\mathcal C_k(\mathbf A)_{[i_1, i_2, \dots, i_{k-1}, r]}\otimes
\mathcal C_k(\mathbf B)_{[i_1, i_2, \dots, i_{k-1}, r]}\right)=\vzero.
\end{equation}
Summation of \eqref{eq33} over $r$ yields
\begin{equation}
k\left[\mathcal C_k(\mathbf A)\odot \mathcal C_k(\mathbf B)\right]
\hatdSmR{k}{4}
=\vzero.
\label{eq5Uproof}
\end{equation}
Since $\text{\textup{(U{\scriptsize k})}}$ holds,  \eqref{eq5Uproof} implies that
\begin{equation*}
 d_{i_1}\cdots d_{i_k}=0, \qquad (i_1, \dots, i_k)\in S^k_R.
\end{equation*}
Since $\hatdSmR{k-1}{R}$ is nonzero, it follows
that exactly $k-1$ of the $R$ values $d_1, \dots, d_R$ are different from zero. Therefore,  $\hatdSmR{k-1}{R}$ has exactly one nonzero component.
It follows that the matrix $\mathcal C_{k-1}(\mathbf A)\odot \mathcal C_{k-1}(\mathbf B)$ has a zero column. Hence,
$\min(k_{\mathbf A}, k_{\mathbf B})\leq k-2$. On the other hand,  Lemma \ref{lemmaU_mKhR} implies that  $\min(k_{\mathbf A}, k_{\mathbf B})\geq k$, which is a contradiction.
\end{proof}

The following lemma completes scheme \eqref{maindiagramintro}.

\begin{lemma}\label{compoundkhrkrusk}
Let $\mathbf A\in \mathbb F^{I\times R}$,  $\mathbf B\in \mathbb F^{J\times R}$,  and $m\leq \min(I, J)$.
Then condition $\text{\textup{(K{\scriptsize m})}}$ implies condition \textup{(C{\scriptsize m})}.
\end{lemma}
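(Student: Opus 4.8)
The plan is to reduce \cond{K}{m}$\Rightarrow$\cond{C}{m} to a core low-rank statement. Because $\mathcal C_m(\mathbf A)\odot\mathcal C_m(\mathbf B)$ and $\mathcal C_m(\mathbf B)\odot\mathcal C_m(\mathbf A)$ agree up to a permutation of rows, they have equal column rank, so I may assume the first alternative of \cond{K}{m}: $k_{\mathbf A}\ge m$ and $r_{\mathbf A}+k_{\mathbf B}\ge R+m$. Since $r_{\mathbf A}\le R$, the second inequality forces $k_{\mathbf B}\ge m$ as well, so by Lemma \ref{compoundprop1} neither compound has a zero column. By \eqref{eqmatrtovec_b}, condition \cond{C}{m} is the implication ``$\mathcal C_m(\mathbf B)\,\textup{\text{Diag}}(\mathbf x)\,\mathcal C_m(\mathbf A)^T=\mzero\Rightarrow\mathbf x=\vzero$'' for $\mathbf x\in\mathbb F^{C_R^m}$. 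If $r_{\mathbf A}=R$ (or, symmetrically, $r_{\mathbf B}=R$) this is immediate: then $\mathcal C_m(\mathbf A)$ has full column rank, its columns $\mathcal C_m(\mathbf A)_S$ are linearly independent, and a family $\{\mathcal C_m(\mathbf A)_S\otimes\mathcal C_m(\mathbf B)_S\}$ with linearly independent first factors and nonzero second factors is itself linearly independent.

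For the remaining case I would first reduce to full row rank. Writing a rank factorization $\mathbf A=\mathbf F\mathbf G$ with $\mathbf F\in\mathbb F^{I\times r_{\mathbf A}}$ of full column rank and $\mathbf G\in\mathbb F^{r_{\mathbf A}\times R}$ of full row rank, Binet--Cauchy (Lemma \ref{LemmaCompound}) gives $\mathcal C_m(\mathbf A)=\mathcal C_m(\mathbf F)\mathcal C_m(\mathbf G)$ with $\mathcal C_m(\mathbf F)$ of full column rank, whence $\mathcal C_m(\mathbf A)\odot\mathcal C_m(\mathbf B)=(\mathcal C_m(\mathbf F)\otimes\mathbf I)\,(\mathcal C_m(\mathbf G)\odot\mathcal C_m(\mathbf B))$. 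As $\mathcal C_m(\mathbf F)\otimes\mathbf I$ is injective and $k_{\mathbf G}=k_{\mathbf A}$, $r_{\mathbf G}=r_{\mathbf A}$, I may replace $\mathbf A$ by $\mathbf G$, and symmetrically $\mathbf B$ by its own full-row-rank factor; both factor matrices then have full row rank, so $\mathcal C_m(\mathbf A)^T$ has full column rank and its range $W\subseteq\mathbb F^{C_R^m}$ (the row space of $\mathcal C_m(\mathbf A)$) has dimension $C^m_{r_{\mathbf A}}$. In these terms the identity reads $\mathbf x\circ\mathbf w\in\ker\mathcal C_m(\mathbf B)$ for every $\mathbf w\in W$, where $\circ$ is the entrywise product. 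I would now expand every ``non-basis'' column $\mathcal C_m(\mathbf A)_S$ through the linear dependences among the columns of $\mathbf A$ into the basis $\{\mathcal C_m(\mathbf A)_S:S\subseteq\mathcal B\}$ of $\textup{range}(\mathcal C_m(\mathbf A))$ determined by a maximal independent set $\mathcal B$ of $r_{\mathbf A}$ columns, do the same for $\mathbf B$, and read off a homogeneous system in the entries of $\mathbf x$ by testing against the linearly independent tensor products $\mathcal C_m(\mathbf A)_{S}\otimes\mathcal C_m(\mathbf B)_{S'}$.

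The hard part will be the bookkeeping of these cross terms, and it is unavoidable: the black-box Khatri--Rao criteria (Corollary \ref{rank Khatri-Rao lemma} and Lemma \ref{lemmaMiron1}) applied to $\mathcal C_m(\mathbf A),\mathcal C_m(\mathbf B)$ are too weak once $r_{\mathbf A},r_{\mathbf B}<R$, since both compounds then have $k$-rank at most their ranks $C^m_{r_{\mathbf A}},C^m_{r_{\mathbf B}}$, which may be far below $C^m_R$; the two factors must be used jointly through the shared index $S$. Concretely, the reformulation forces $\dim(W\cap\mathbb F^{Z})\ge C^m_{r_{\mathbf A}}+C^m_{r_{\mathbf B}}-C^m_R$, where $Z=\{S:x_S=0\}$, and the task is to show that $k_{\mathbf A}\ge m$ together with the margin $k_{\mathbf B}-(R-r_{\mathbf A})\ge m$ forbids the row space $W$ from meeting a coordinate subspace $\mathbb F^{Z}$ in a subspace that large unless $Z=S_R^m$, i.e.\ unless $\mathbf x=\vzero$. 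Establishing this upper bound on $\dim(W\cap\mathbb F^{Z})$ is the one genuinely nonroutine step; it is the $m$-th-order generalization of Stegeman's argument behind Theorem \ref{TheoremAlwin}.
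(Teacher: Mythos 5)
Your reduction to the first alternative of \cond{K}{m}, the observation that $k_{\mathbf B}\geq m$, and the disposal of the case $r_{\mathbf A}=R$ (full column rank of $\mathcal C_m(\mathbf A)$ via Lemma \ref{LemmaCompound}(5) plus nonzero columns of $\mathcal C_m(\mathbf B)$) all match the paper and are correct. But for $r_{\mathbf A}<R$ you do not give a proof: you reformulate \cond{C}{m} as the statement that the row space $W$ of $\mathcal C_m(\mathbf A)$ cannot meet the coordinate subspace $\mathbb F^{Z}$, $Z=\{S: x_S=0\}$, in a subspace of dimension $\geq C^m_{r_{\mathbf A}}+C^m_{r_{\mathbf B}}-C^m_R$ unless $Z=S_R^m$, and then explicitly leave this bound as ``the one genuinely nonroutine step.'' That step is the entire content of the lemma in the hard case, and it is not clear that your dimension count can close it: the lower bound you derive uses only $r_{\mathbf B}$, discarding the hypothesis $k_{\mathbf B}\geq R-r_{\mathbf A}+m$, which is exactly the quantitative input the conclusion needs. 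So the proposal as written has a genuine gap.

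For comparison, the paper closes this case not by counting dimensions but by isolating individual entries of ${\mathbf d}_{S_R^m}$. Starting from $\mathcal C_m(\mathbf A)\,\textup{\text{Diag}}({\mathbf d}_{S_R^m})\,\mathcal C_m(\mathbf B)^T=\mzero$, one takes a right inverse $\mathbf X^\dagger$ of $\mathbf X=[\mathbf b_1\ \cdots\ \mathbf b_{k_{\mathbf B}}]^T$ (which exists since $k_{\mathbf B}$ columns of $\mathbf B$ are independent) and forms $\mathbf Y=\mathbf X^\dagger\bigl[\begin{smallmatrix}\mzero\\ \mathbf I_m\end{smallmatrix}\bigr]$, so that $\mathcal C_m(\mathbf B^T\mathbf Y)$ is a column vector whose first $C^m_R-C^m_{R-k_{\mathbf B}+m}$ entries vanish and whose next entry equals $1$. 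Multiplying the matrix identity on the right by $\mathcal C_m(\mathbf Y)$ and using that the last $C^m_{R-k_{\mathbf B}+m}$ columns of $\mathcal C_m(\mathbf A)$ are linearly independent --- this is where $r_{\mathbf A}+k_{\mathbf B}\geq R+m$ enters, via $C^m_{r_{\mathbf A}}\geq C^m_{R-k_{\mathbf B}+m}$ --- forces $d_{(k_{\mathbf B}-m+1,\dots,k_{\mathbf B})}=0$. Finally, $k_{\mathbf A}\geq m$ guarantees that any $m$ columns $\mathbf a_{j_1},\dots,\mathbf a_{j_m}$ extend to a basis of $\textup{range}(\mathbf A)$, so a column permutation $\mathbf\Pi$ places an arbitrary index tuple $(j_1,\dots,j_m)$ in the distinguished position and the same argument gives $d_{(j_1,\dots,j_m)}=0$ for all tuples, i.e.\ ${\mathbf d}_{S_R^m}=\vzero$. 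If you want to salvage your route, you would need to prove your claimed upper bound on $\dim(W\cap\mathbb F^{Z})$; the construction above is in effect a constructive substitute for it and is the piece your argument is missing.
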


\begin{proof}
We give the proof for the case  $r_{\mathbf A}+k_{\mathbf B}\geq R+m$ and $k_{\mathbf A}\geq m$; the case
$r_{\mathbf B}+k_{\mathbf A}\geq R+m$ and $k_{\mathbf B}\geq m$ follows by symmetry. We obviously have $k_{\mathbf B} \geq m$.

In the case $k_{\mathbf B}=m$, we have $r_{\mathbf A} =R$. Lemma \ref{LemmaCompound} (5) implies that the  $C^m_I\times C^m_{R}$ matrix $C_m(\mathbf A)$ has full column rank. The fact that $k_{\mathbf B}=m$  implies that every column of $C_m(\mathbf B)$ contains at least one nonzero entry. It immediately  follows that $\mathcal C_m(\mathbf A)\odot \mathcal C_m(\mathbf B)$ has  full column rank.

We now consider the case $k_{\mathbf B}>m$.
\begin{romannum}
\item
Suppose that
$
[\mathcal C_m(\mathbf A)\odot \mathcal C_m(\mathbf B)]{\mathbf d}_{S_R^m} =\vzero_{C^m_I C^m_J}
$
for some vector ${\mathbf d}_{S_R^m} \in\mathbb F^{C^m_R}$.
Then, by \eqref{eqmatrtovec_b},
\begin{equation}
\mathcal C_m(\mathbf A) \textup{\text{Diag}}({\mathbf d}_{S_R^m})\mathcal C_m(\mathbf B)^T=\mzero_{C^m_I \times C^m_J}.
\label{eqtfrom1.4}
\end{equation}
\item Let us for now assume that the last $r_{\mathbf A}$ columns of $\mathbf A$ are linearly independent. We show that
$d_{(k_{\mathbf B}-m+1, \dots, k_{\mathbf B})}=0$.

By definition of $k_{\mathbf B}$, the matrix $\mathbf X:=\left[\begin{matrix}\mathbf b_1&\dots&\mathbf b_{k_{\mathbf B}}\end{matrix}\right]^T$
has full row rank. Hence, $\mathbf X \mathbf X^\dagger =\mathbf  I_{k_{\mathbf B}}$
, where $\mathbf X^\dagger$ denotes a right inverse of $\mathbf X$.
Denoting
$$
\mathbf Y:=
\mathbf X^\dagger \left[\begin{array}{l}
\mzero_{(k_{\mathbf B}-m)\times m}\\
\mathbf I_m
\end{array}\right],
$$
we have
\begin{equation*}
\begin{split}
 \mathbf B^T\mathbf Y
 =
 &\begin{bmatrix}\mathbf X\\ \left[\begin{matrix}\mathbf b_{k_{\mathbf B}+1}&\dots&\mathbf b_{R}\end{matrix}\right]^T\end{bmatrix}\mathbf X^\dagger
 \left[\begin{array}{l}
\mzero_{(k_{\mathbf B}-m)\times m}\\
\mathbf I_m
\end{array}\right]
 \\
 =&\begin{bmatrix}\mathbf I_{k_{\mathbf B}}\\ \boxplus_{(R-k_{\mathbf B})\times k_{\mathbf B}}\end{bmatrix}
 \left[\begin{array}{l}
\mzero_{(k_{\mathbf B}-m)\times m}\\
\mathbf I_m
\end{array}\right]
 =
\left[\begin{array}{l}
\mzero_{(k_{\mathbf B}-m)\times m}\\
\mathbf I_m\\
\boxplus_{(R-k_{\mathbf B})\times m}
\end{array}\right],
\end{split}
\end{equation*}
where $\boxplus_{p\times q}$ denotes a $p\times q$ matrix that is not further specified.
From the definition of the $m$-th compound matrix it follows  that
\begin{equation}
\mathcal C_m(\mathbf B^T\mathbf Y)
=
\left[\begin{array}{l}
\vzero_{C^m_R-C^m_{R-k_{\mathbf B}+m}}\\
1\\
\boxplus_{(C^m_{R-k_{\mathbf B}+m}-1)\times 1}
\end{array}\right].
\label{eq:proofprop3.9}
\end{equation}
We now have
\begin{eqnarray}
\vzero_{C^m_I} & = &
\mzero_{C_I^m\times C_J^m} \cdot \mathcal C_m(\mathbf Y) \nonumber \\
& \stackrel{\eqref{eqtfrom1.4}}{=} &
\mathcal C_m(\mathbf A) \textup{\text{Diag}}({\mathbf d}_{S_R^m})\mathcal C_m(\mathbf B^T)\cdot\mathcal C_m(\mathbf Y)
\nonumber  \\
& = &\mathcal C_m(\mathbf A) \textup{\text{Diag}}({\mathbf d}_{S_R^m})\mathcal C_m(\mathbf B^T\mathbf Y) \nonumber  \\
& \stackrel{\eqref{eq:proofprop3.9}}{=} &
\mathcal C_m(\mathbf A)
\left[\begin{matrix}
\vzero_{C^m_R-C^m_{R-k_{\mathbf B}+m}} \\
d_{(k_{\mathbf B}-m+1, \dots, k_{\mathbf B})}\\
\boxplus_{(C^m_{R-k_{\mathbf B}+m}-1)\times 1}
\end{matrix}\right].
\label{compdependent}
\end{eqnarray}
Since the last $r_{\mathbf A}$ columns of $\mathbf A$ are linearly independent,  Lemma \ref{LemmaCompound} (5) implies that the  $C^m_I\times C^m_{r_{\mathbf A}}$
matrix  $\mathbf M=\mathcal C_m(\left[\begin{matrix}\mathbf a_{R-r_{\mathbf A}+1}&\dots&\mathbf a_R\end{matrix}\right])$ has full column rank. By definition, $\mathbf M$ consists of the last $C^m_{r_{\mathbf A}}$ columns of $C_m(\mathbf A)$. Obviously,  $r_{\mathbf A}+k_{\mathbf B}\geq R+m$ implies $C^m_{r_{\mathbf A}}\geq C^m_{R-k_{\mathbf B}+m}$.
Hence, the last $C^m_{R-k_{\mathbf B}+m}$ columns of $C_m(\mathbf A)$ are linearly independent and the coefficient vector in \eqref{compdependent} is zero. In particular,
$d_{(k_{\mathbf B-m+1},\dots, k_{\mathbf B})}=0$.

\item
We show that  $d_{(j_1,\dots, j_m)}=0$ for any choice of ${j_1}$, ${j_2}$,  \dots, ${j_m}$, $1\leq j_1<\dots <j_m\leq R$.

Since $k_{\mathbf A}\geq m$, the set of vectors $\mathbf a_{j_1},\dots,\mathbf a_{j_m}$  is linearly independent.
Let us extend the set $\mathbf a_{j_1},\dots,\mathbf a_{j_m}$ to a basis of $\textup{range}(\mathbf A)$
by adding $r_{\mathbf A}-m$ linearly independent columns of $\mathbf A$. Denote these basis vectors by $\mathbf a_{j_1},\dots,\mathbf a_{j_m}, \mathbf a_{j_{m+1}},\dots, \mathbf a_{j_{r_{\mathbf A}}}$.
It is clear that there exists an  $R\times R$ permutation matrix $\mathbf \Pi$  such that the
$(\mathbf A\mathbf \Pi)_{R-r_{\mathbf A}+1}=\mathbf a_{j_1},\dots, (\mathbf A\mathbf\Pi)_R=\mathbf a_{j_{r_{\mathbf A}}}$,
where here and in the sequel $(\mathbf A\mathbf \Pi)_r$ denotes the $r$-th column of the matrix $\mathbf A\mathbf \Pi$.
Moreover, since $k_{\mathbf B}-m+1\geq R-r_{\mathbf A}+1$ we can choose $\mathbf\Pi$ such that it additionally satisfies
$(\mathbf A\mathbf{\Pi})_{k_{\mathbf B}-m+1} = \mathbf a_{j_1}$, $(\mathbf A\mathbf{\Pi})_{k_{\mathbf B}-m+2} = \mathbf a_{j_2}$, \ldots, $(\mathbf A\mathbf{\Pi})_{k_{\mathbf B}} = \mathbf a_{j_m}$.
We can now reason as under (ii) for $\mathbf A \mathbf{\Pi}$ and $\mathbf B \mathbf{\Pi}$ to obtain that $d_{(j_1,\dots, j_m)}=0$.

\item From (iii) we immediately obtain that ${\mathbf d}_{S_R^m}=\vzero$.
Hence, $\mathcal C_m(\mathbf A)\odot \mathcal C_m(\mathbf B)$ has  full column rank.
\end{romannum}
\end{proof}
We now give results that concern $\text{\textup{(H{\scriptsize m})}}$.
\begin{lemma}\label{KmHm}
Let $\mathbf A\in \mathbb F^{I\times R}$,  $\mathbf B\in \mathbb F^{J\times R}$,  and $m\leq \min(I, J)$.
Then condition $\text{\textup{(K{\scriptsize m})}}$ implies condition \textup{(H{\scriptsize m})}.
\end{lemma}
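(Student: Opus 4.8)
The plan is to unwind the definition of \cond{H}{m} into an elementary statement about ranks of column subsets, and then verify it by two standard rank inequalities together with a short case analysis. By the symmetry built into \cond{K}{m}, I would assume the first alternative, $r_{\mathbf A}+k_{\mathbf B}\geq R+m$ and $k_{\mathbf A}\geq m$ (the second alternative follows by swapping the roles of $\mathbf A$ and $\mathbf B$). A first useful observation is that $r_{\mathbf A}\leq R$ forces $k_{\mathbf B}\geq R+m-r_{\mathbf A}\geq m$, so in fact both $k$-ranks are at least $m$. Unwinding the definition, proving \cond{H}{m} amounts to showing that for every $\delta\in\{1,\dots,R\}$ and every choice of $\delta$ corresponding columns $\tilde{\mathbf A},\tilde{\mathbf B}$ of $\mathbf A,\mathbf B$ one has $r_{\tilde{\mathbf A}}+r_{\tilde{\mathbf B}}\geq\delta+\min(\delta,m)$.

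The two tools I would use are: (a) the $k$-rank estimate $r_{\tilde{\mathbf A}}\geq\min(\delta,k_{\mathbf A})$ and $r_{\tilde{\mathbf B}}\geq\min(\delta,k_{\mathbf B})$, which is immediate from Definition \ref{defkrank} (any $\min(\delta,k)$ columns of the subset are linearly independent); and (b) the deletion bound $r_{\tilde{\mathbf A}}\geq r_{\mathbf A}-(R-\delta)$, valid because $\tilde{\mathbf A}$ arises from $\mathbf A$ by removing $R-\delta$ columns and each removal drops the rank by at most one. With these in hand the argument splits on the size of $\delta$. When $\delta\leq m$ we have $\delta\leq\min(k_{\mathbf A},k_{\mathbf B})$, hence $r_{\tilde{\mathbf A}}=r_{\tilde{\mathbf B}}=\delta$ and $H(\delta)=\delta=\min(\delta,m)$ exactly. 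When $\delta>m$ the target becomes $r_{\tilde{\mathbf A}}+r_{\tilde{\mathbf B}}\geq\delta+m$, and I would distinguish $\delta\geq k_{\mathbf B}$ from $\delta<k_{\mathbf B}$: in the former case, combining (b) for $\mathbf A$ with $r_{\tilde{\mathbf B}}\geq k_{\mathbf B}$ gives $r_{\tilde{\mathbf A}}+r_{\tilde{\mathbf B}}\geq r_{\mathbf A}+k_{\mathbf B}-R+\delta\geq m+\delta$ by the hypothesis $r_{\mathbf A}+k_{\mathbf B}\geq R+m$; in the latter case $r_{\tilde{\mathbf B}}=\delta$ and, since $\delta>m$ and $k_{\mathbf A}\geq m$, the bound (a) yields $r_{\tilde{\mathbf A}}\geq\min(\delta,k_{\mathbf A})\geq m$, so again $r_{\tilde{\mathbf A}}+r_{\tilde{\mathbf B}}\geq m+\delta$.

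The step I expect to be the crux is precisely the subcase $m<\delta<k_{\mathbf B}$. There the deletion estimate alone only gives $r_{\tilde{\mathbf A}}+r_{\tilde{\mathbf B}}-\delta\geq r_{\mathbf A}-R+\delta$, which can be strictly smaller than $m$, so the inequality $r_{\mathbf A}+k_{\mathbf B}\geq R+m$ is not enough by itself. This is exactly where the second hypothesis $k_{\mathbf A}\geq m$ of \cond{K}{m} must be invoked to lower-bound $r_{\tilde{\mathbf A}}$ directly. Recognizing that the two inequalities in \cond{K}{m} govern two complementary regimes of $\delta$ is the key structural point; the rest is the routine bookkeeping sketched above.
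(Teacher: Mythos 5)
Your proposal is correct and follows essentially the same route as the paper's proof: the same reduction by symmetry to the first alternative of \cond{K}{m}, the same three-way case split on $\delta$ relative to $m$ and $k_{\mathbf B}$, and the same two rank bounds (the $k$-rank estimate and the column-deletion bound, which the paper phrases as $r_{\tilde{\mathbf A}}\geq r_{\mathbf A}-r_{\tilde{\mathbf A}^c}$). No gaps.
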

\begin{proof}
We give the proof for the case  $r_{\mathbf A}+k_{\mathbf B}\geq R+m$ and $k_{\mathbf A}\geq m$; the case
$r_{\mathbf B}+k_{\mathbf A}\geq R+m$ and $k_{\mathbf B}\geq m$ follows by symmetry. We obviously have $k_{\mathbf B} \geq m$.
\begin{romannum}
\item Suppose that $\delta\leq m$. Then $r_{\tilde{\mathbf A}}=r_{\tilde{\mathbf B}}=\delta$. Hence, $H(\delta)=\delta$.
\item Suppose that $\delta\geq m$ and $\delta\geq k_{\mathbf B}$. Then $r_{\tilde{\mathbf B}}\geq k_{\mathbf B}$. Let $\tilde{\mathbf A}^c$ denote the $I\times(R-\delta)$  matrix obtained from $\mathbf A$ by removing the columns that are also in $\tilde{\mathbf A}$. Then $r_{\tilde{\mathbf A}}\geq r_{\mathbf A}-r_{\tilde{\mathbf A}^c}$.
    Hence, $r_{\tilde{\mathbf A}}+r_{\tilde{\mathbf B}}-\delta\geq r_{\mathbf A}-r_{\tilde{\mathbf A}^c}+r_{\tilde{\mathbf B}}-\delta\geq r_{\mathbf A}-(R-\delta)+k_{\mathbf B}-\delta\geq m$. Thus, $H(\delta)\geq m$.
    \item Suppose that $k_{\mathbf B}\geq\delta\geq m$. Then $r_{\tilde{\mathbf B}}=\delta$. Since
    $r_{\tilde{\mathbf A}}\geq\min(\delta,k_{\mathbf A})\geq m$, it follows that
  $r_{\tilde{\mathbf A}}+r_{\tilde{\mathbf B}}-\delta\geq m+\delta-\delta=m$. Thus, $H(\delta)\geq m$.
\end{romannum}
\end{proof}

\begin{lemma}\label{HmUm}
Let $\mathbf A\in \mathbb F^{I\times R}$,  $\mathbf B\in \mathbb F^{J\times R}$,  and $m\leq \min(I, J)$.
Then condition $\text{\textup{(H{\scriptsize m})}}$ implies condition \textup{(U{\scriptsize m})}.
\end{lemma}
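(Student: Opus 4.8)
The plan is to reduce \textup{(U{\scriptsize m})} to the rank formulation (i) of Lemma \ref{lemma:equivUm}: I will show that every $\mathbf d\in\mathbb F^R$ with $r_{\mathbf A\textup{\text{Diag}}(\mathbf d)\mathbf B^T}\le m-1$ satisfies $\omega(\mathbf d)\le m-1$. This is the natural target, because \textup{(H{\scriptsize m})} is phrased entirely in terms of ranks of column subsets, while $r_{\mathbf A\textup{\text{Diag}}(\mathbf d)\mathbf B^T}$ is precisely a rank that can be bounded below by such a quantity.

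First I would pass to the support of $\mathbf d$. Setting $\delta:=\omega(\mathbf d)$, let $\tilde{\mathbf A}$ and $\tilde{\mathbf B}$ be the submatrices consisting of the $\delta$ columns indexed by the nonzero entries of $\mathbf d$, and let $\tilde{\mathbf d}$ collect those nonzero entries. The zero columns of $\textup{\text{Diag}}(\mathbf d)$ contribute nothing, so $\mathbf A\textup{\text{Diag}}(\mathbf d)\mathbf B^T=\tilde{\mathbf A}\,\textup{\text{Diag}}(\tilde{\mathbf d})\,\tilde{\mathbf B}^T$; since $\textup{\text{Diag}}(\tilde{\mathbf d})$ is invertible, this product has the same rank as $\mathbf A\textup{\text{Diag}}(\mathbf d)\mathbf B^T$.

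The main step is to apply the Sylvester rank inequality to the product $\tilde{\mathbf A}\cdot(\textup{\text{Diag}}(\tilde{\mathbf d})\tilde{\mathbf B}^T)$, whose inner dimension is $\delta$. Using the invertibility of $\textup{\text{Diag}}(\tilde{\mathbf d})$ once more to replace $r_{\textup{\text{Diag}}(\tilde{\mathbf d})\tilde{\mathbf B}^T}$ by $r_{\tilde{\mathbf B}}$, this yields $r_{\mathbf A\textup{\text{Diag}}(\mathbf d)\mathbf B^T}\ge r_{\tilde{\mathbf A}}+r_{\tilde{\mathbf B}}-\delta$. Because $\tilde{\mathbf A}$ is a choice of $\delta$ columns, condition \textup{(H{\scriptsize m})} gives $r_{\tilde{\mathbf A}}+r_{\tilde{\mathbf B}}-\delta\ge H(\delta)\ge\min(\delta,m)$, and hence $r_{\mathbf A\textup{\text{Diag}}(\mathbf d)\mathbf B^T}\ge\min(\delta,m)$.

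Finally I would combine this with the hypothesis $r_{\mathbf A\textup{\text{Diag}}(\mathbf d)\mathbf B^T}\le m-1$: then $\min(\delta,m)\le m-1<m$, which forces $\min(\delta,m)=\delta$ and therefore $\omega(\mathbf d)=\delta\le m-1$, as needed; invoking Lemma \ref{lemma:equivUm} then delivers \textup{(U{\scriptsize m})}. I do not expect a genuine obstacle here: the argument is short once the reduction to the support is in place. The only points requiring care are verifying that $\textup{\text{Diag}}(\tilde{\mathbf d})$ is invertible so that ranks are preserved, and orienting the Sylvester inequality correctly so that its $-\delta$ term matches the $-\delta$ appearing in the definition of $H(\delta)$.
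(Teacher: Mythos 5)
Your proposal is correct and follows essentially the same route as the paper: restrict to the support of $\mathbf d$, apply the Sylvester rank inequality to bound $r_{\mathbf A\textup{\text{Diag}}(\mathbf d)\mathbf B^T}$ from below by $r_{\tilde{\mathbf A}}+r_{\tilde{\mathbf B}}-\delta\geq H(\delta)\geq\min(\delta,m)$, and conclude $\omega(\mathbf d)\leq m-1$ via the rank reformulation of \textup{(U{\scriptsize m})} from Lemma \ref{lemma:equivUm}. The only cosmetic difference is that the paper writes the Sylvester step through $r_{\tilde{\mathbf B}\textup{\text{Diag}}(\tilde{\mathbf d})}+r_{\textup{\text{Diag}}(\tilde{\mathbf d})\tilde{\mathbf A}}-r_{\textup{\text{Diag}}(\tilde{\mathbf d})}$ rather than absorbing the invertible diagonal factor first, which is the same estimate.
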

\begin{proof}
The following proof is based on the proof of Rank Lemma from \cite[p. 121]{Kruskal1977}.
Let $(\mathcal C_{m}(\mathbf A )\odot\mathcal C_m(\mathbf B))\hatdSmR{m}{R}=\vzero$ for $\hatdSmR{m}{R}$
associated with $\mathbf d\in\mathbb F^R$.
By  Lemma \ref{cor2.6},
$\mathcal C_{m}(\mathbf B  \textup{\text{Diag}}(\mathbf d)\mathbf A^T)=\mathcal C_{m}(\mathbf B) \textup{\text{Diag}}(\hatdSmR{m}{R})\mathcal C_m(\mathbf A)^T=
\mzero$. Hence, $r_{\mathbf B  \textup{\text{Diag}}(\mathbf d)\mathbf A^T}\leq m-1$.
Let $\omega(\mathbf d)=\delta$ and $d_{i_1}=\dots=d_{i_{R-\delta}}=0$. Form the matrices $\tilde{\mathbf  A}$, $\tilde{\mathbf B}$ and the vector $\tilde{\mathbf d}$ by dropping the columns of $\mathbf A$, $\mathbf B$ and the entries of  $\mathbf d$ indexed by $i_1,\dots, i_{R-\delta}$.
From the   Sylvester rank  inequality we obtain
\begin{align*}
\min(\delta,m)\leq H(\delta)&\leq r_{\tilde{\mathbf B}}+r_{\tilde{\mathbf A}}-\delta=
r_{\tilde{\mathbf B}\textup{\text{Diag}}(\tilde{\mathbf d})}+r_{\textup{\text{Diag}}(\tilde{\mathbf d})\tilde{\mathbf A}}-r_{\textup{\text{Diag}}(\tilde{\mathbf d})}
\\
&\leq r_{\tilde{\mathbf B}  \textup{\text{Diag}}(\tilde{\mathbf d})\tilde{\mathbf A}^T}=
r_{\mathbf B  \textup{\text{Diag}}(\mathbf d)\mathbf A^T}\leq m-1.
\end{align*}
Hence, $\delta\leq m-1$. From  Lemma \ref{Lemma2.3} 1) it follows that $\hatdSmR{m}{R}=\vzero$.
\end{proof}

The remaining part of this section concerns \eqref{eq1.14}.

Lemma \ref{compoundumuk} and Lemma \ref{lemmaU_mKhR} can be summarized as follows:
$$
\text{\textup{(U{\scriptsize m})}}\Rightarrow
\left\{
\begin{array}{l}
\text{\textup{(U{\scriptsize k})}},\ k\leq m;\\
\mathbf A\odot\mathbf B\ \text{ has full column rank}\ (\Leftrightarrow \text{\textup{(U{\scriptsize 1})}});\\
\min (k_{\mathbf A},k_{\mathbf B})\geq m.
\end{array}
\right.
$$
The following example demonstrates that similar implications do not  necessarily hold for  $\text{\textup{(W{\scriptsize m})}}$.
Namely, in general, $\text{\textup{(W{\scriptsize m})}}$ does not imply any of the following conditions:
\begin{align*}
&\text{\textup{(W{\scriptsize k})}}\text{ for }\ k\leq m-1, \\
&\mathbf A\odot\mathbf B\ \text{ has full column rank},\\
&\min (k_{\mathbf A},k_{\mathbf B})\geq m-1.
\end{align*}
\begin{example}
Let $m=2$ and let
$$
\mathbf A=\left[\begin{matrix} 1&0&0&1\\ 0&1&0&1\end{matrix}\right],\quad
\mathbf B=\left[\begin{matrix} 1&0&1&1\\ 0&1&1&2\end{matrix}\right],\quad
\mathbf C=\left[\begin{matrix} 0&0&1&0\\ 1&1&0&1\end{matrix}\right].
$$
Let us show that condition $\text{\textup{(W{\scriptsize 2})}}$ holds but condition $\text{\textup{(W{\scriptsize 1})}}$ does not hold.
It is easy to check that
$$
\mathbf A\odot\mathbf B= \left[\begin{matrix} 1&0&0&1\\ 0&0&0&2\\ 0&0&0&1\\ 0&1&0&2\end{matrix}\right],\quad
\mathcal C_2(\mathbf A)\odot \mathcal C_2(\mathbf B)=\left[\begin{matrix} 1&0&2&0&1&0\end{matrix}\right].
$$
Let $\mathbf d\in\textup{range}(\mathbf C^T)$. Then there exist $x_1,x_2\in\mathbb F$ such that
$\mathbf d=\left[\begin{matrix} x_2&x_2&x_1&x_2\end{matrix}\right]^T$. Hence,
$\hatdSmR{2}{4}=
\left[\begin{matrix} x_2^2&x_1x_2&x_2^2&x_1x_2&x_2^2&x_1x_2\end{matrix}\right]^T$. Therefore
\begin{align*}
(\mathcal C_2(\mathbf A)\odot \mathcal C_2(\mathbf B))\hatdSmR{2}{4}=\vzero\Leftrightarrow x_2=0\Rightarrow \omega(\mathbf d)\leq 1 = 2-1\Rightarrow
&\text{\textup{(W{\scriptsize 2})}} \text{ holds},\\
(\mathbf A\odot \mathbf B)\mathbf e_3^4=\vzero,\ \ \mathbf e_3^4\in \textup{range}(\mathbf C^T),\ \ \omega(\mathbf e_3^4)=1>1-1
\Rightarrow&\text{\textup{(W{\scriptsize 1})}} \text{ does not hold},
\end{align*}
where $\mathbf e_3^4=\left[\begin{matrix}0&0&1&0\end{matrix}\right]^T$.
In particular,  $\mathbf A\odot\mathbf B$ does not have full column rank.
Besides, since the matrix $\mathbf A$ has a zero column, it follows that  $\min(k_{\mathbf A},k_{\mathbf B})=0<m-1$.
\end{example}

The following lemma now establishes \eqref{eq1.14}.
\begin{lemma}\label{compoundwmwk}
Let $\mathbf A\in \mathbb F^{I\times R}$,  $\mathbf B\in \mathbb F^{J\times R}$, $\mathbf C\in \mathbb F^{K\times R}$, $1<m\leq \min(I, J)$,
and $\min (k_{\mathbf A},k_{\mathbf B})\geq m-1$.
Then condition $\text{\textup{(W{\scriptsize m})}}$ implies conditions \textup{(W{\scriptsize m-1})},$\dots$,\textup{(W{\scriptsize 1})}.
\end{lemma}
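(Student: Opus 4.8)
The plan is to mirror the proof of Lemma \ref{compoundumuk} almost verbatim, the only two changes being that we carry along the range constraint $\mathbf d\in\textup{range}(\mathbf C^T)$ and that we replace the appeal to Lemma \ref{lemmaU_mKhR} (which is unavailable for $\textup{(W{\scriptsize m})}$, as the preceding example shows) by the standing hypothesis $\min(k_{\mathbf A},k_{\mathbf B})\geq m-1$. As there, it suffices to prove that $\textup{(W{\scriptsize k})}$ implies $\textup{(W{\scriptsize k-1})}$ for each $k\in\{m,m-1,\dots,2\}$, since at every such step $\min(k_{\mathbf A},k_{\mathbf B})\geq m-1\geq k-1$, and the $\Phi$-maps of Lemma \ref{compound1} are defined because $k\leq m\leq\min(I,J)$.

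First I would argue by contradiction: suppose $\textup{(W{\scriptsize k-1})}$ fails, so there is a vector $\mathbf d\in\textup{range}(\mathbf C^T)$ with $[\mathcal C_{k-1}(\mathbf A)\odot\mathcal C_{k-1}(\mathbf B)]\hatdSmR{k-1}{R}=\vzero$ while $\hatdSmR{k-1}{R}\neq\vzero$. The crucial observation is that the entire passage from the $(k-1)$-th to the $k$-th compound level in Lemma \ref{compoundumuk} is driven by a single fixed $\mathbf d$: multiplying by the matrices $\Phi^{I,k}(\mathbf a_r)$, $\Phi^{J,k}(\mathbf b_r)$ and summing over $r$ exactly as in \eqref{eqdi}--\eqref{eq5Uproof} yields $k\,[\mathcal C_k(\mathbf A)\odot\mathcal C_k(\mathbf B)]\hatdSmR{k}{R}=\vzero$, and the vector $\mathbf d$ producing $\hatdSmR{k}{R}$ is the same one, hence still lies in $\textup{range}(\mathbf C^T)$. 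Since $\mathbb F$ is $\mathbb R$ or $\mathbb C$, we may drop the scalar factor $k$. This preservation of range membership is exactly what lets me feed the conclusion back into $\textup{(W{\scriptsize k})}$.

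Next I would invoke $\textup{(W{\scriptsize k})}$: because $\mathbf d\in\textup{range}(\mathbf C^T)$ and $[\mathcal C_k(\mathbf A)\odot\mathcal C_k(\mathbf B)]\hatdSmR{k}{R}=\vzero$, condition $\textup{(W{\scriptsize k})}$ forces $\hatdSmR{k}{R}=\vzero$, i.e. $\omega(\mathbf d)\leq k-1$ by Lemma \ref{Lemma2.3} (1). On the other hand $\hatdSmR{k-1}{R}\neq\vzero$ gives $\omega(\mathbf d)\geq k-1$, so $\omega(\mathbf d)=k-1$, and Lemma \ref{Lemma2.3} (2) then shows that $\hatdSmR{k-1}{R}$ has exactly one nonzero component. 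Consequently the identity $[\mathcal C_{k-1}(\mathbf A)\odot\mathcal C_{k-1}(\mathbf B)]\hatdSmR{k-1}{R}=\vzero$ says that the single column of $\mathcal C_{k-1}(\mathbf A)\odot\mathcal C_{k-1}(\mathbf B)$ selected by the support of $\mathbf d$ vanishes; since a Khatri-Rao column is the Kronecker product of the corresponding columns of the two compound factors, one of those two columns is zero, and by Lemma \ref{compoundprop1}.3 this forces $\min(k_{\mathbf A},k_{\mathbf B})\leq k-2$.

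The contradiction is now immediate from the hypothesis: $\min(k_{\mathbf A},k_{\mathbf B})\geq m-1\geq k-1>k-2$, contradicting $\min(k_{\mathbf A},k_{\mathbf B})\leq k-2$. Hence $\textup{(W{\scriptsize k-1})}$ holds, and running this from $k=m$ down to $k=2$ establishes $\textup{(W{\scriptsize m-1})},\dots,\textup{(W{\scriptsize 1})}$. I expect the main obstacle to be conceptual rather than computational: one must recognize that the reduction of Lemma \ref{compoundumuk} keeps $\mathbf d$ fixed, so range membership is inherited for free, and one must pinpoint that the one place where that proof used Lemma \ref{lemmaU_mKhR} is precisely where $\textup{(W{\scriptsize m})}$ is too weak, which is exactly what the extra assumption $\min(k_{\mathbf A},k_{\mathbf B})\geq m-1$ is there to supply.
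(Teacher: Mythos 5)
Your proposal is correct and follows essentially the same route as the paper, which proves this lemma by simply repeating the argument of Lemma \ref{compoundumuk} (noting that the fixed vector $\mathbf d$ keeps its membership in $\textup{range}(\mathbf C^T)$ throughout) and replacing the one appeal to Lemma \ref{lemmaU_mKhR} by the hypothesis $\min(k_{\mathbf A},k_{\mathbf B})\geq m-1$. Your write-up is in fact more explicit than the paper's one-line proof about exactly where that substitution is needed.
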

\begin{proof}
The proof is the same as the proof of Lemma \ref{compoundumuk}, with the difference that instead of Lemma \ref{lemmaU_mKhR}
we use the condition $\min (k_{\mathbf A},k_{\mathbf B})\geq m-1$.
\qquad\end{proof}

\section{Sufficient conditions for the uniqueness of one factor matrix}\label{Section4}

In this section we establish conditions under which a PD is canonical, with one of  the factor matrices unique.
We have the following formal definition.
\begin{definition}
Let $\mathcal T$ be a tensor of rank $R$. The  first (resp. second or third) factor matrix of  $\mathcal T$ is {\em  unique}
if $\mathcal T=[\mathbf A,\mathbf B,\mathbf C]_R=[\bar{\mathbf A},\bar{\mathbf B},\bar{\mathbf C}]_R$ implies that there exist an $R\times R$ permutation matrix
$\mathbf{\Pi}$ and an $R\times R$ nonsingular diagonal matrix ${\mathbf \Lambda}_{\mathbf A}$ (resp. ${\mathbf \Lambda}_{\mathbf B}$ or ${\mathbf \Lambda}_{\mathbf C}$) such that
$
\bar{\mathbf A}=\mathbf A\mathbf{\Pi}{\mathbf \Lambda}_{\mathbf A} \quad (\text{resp.} \quad
\bar{\mathbf B}=\mathbf B\mathbf{\Pi}{\mathbf \Lambda}_{\mathbf B} \quad \ \text{ or }\ \
\bar{\mathbf C}=\mathbf C\mathbf{\Pi}{\mathbf \Lambda}_{\mathbf C}).
$
\end{definition}
\subsection{Conditions  based on  (U{\scriptsize m}), (C{\scriptsize m}), (H{\scriptsize m}), and (K{\scriptsize m})}
\label{subsect:detUmCmWm}
First, we recall Kruskal's permutation lemma, which we will use in the proof of  Proposition \ref{prmostgeneral}.
\begin{lemma}\label{lemmapermutation}
\cite{Kruskal1977,JiangSid2004, Stegeman2007}
Consider two matrices $\bar{\mathbf C}\in\mathbb F^{K\times \bar{R}}$ and $\mathbf C\in\mathbb F^{K\times R}$ such that $\bar{R}\leq R$ and $k_{\mathbf C}\geq 1$.
If for every vector $\mathbf x$ such that $\omega(\bar{\mathbf C}^T\mathbf x)\leq \bar{R}-r_{\bar{\mathbf C}}+1$,
 we have $\omega(\mathbf C^T\mathbf x)\leq \omega(\bar{\mathbf C}^T\mathbf x)$,  then $\bar{R}=R$ and there exist a unique permutation matrix $\mathbf{\Pi}$
and a unique nonsingular diagonal matrix ${\mathbf \Lambda}$ such that $\bar{\mathbf C}=\mathbf C\mathbf{\Pi}{\mathbf \Lambda}$.
\end{lemma}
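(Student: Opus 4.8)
The plan is to establish a one-to-one correspondence, up to scaling, between the columns of $\bar{\mathbf C}$ and those of $\mathbf C$. Throughout I would read $\omega(\bar{\mathbf C}^T\mathbf x)$ as the number of columns of $\bar{\mathbf C}$ that are \emph{not} orthogonal to $\mathbf x$, and likewise $\omega(\mathbf C^T\mathbf x)$ for $\mathbf C$; the hypothesis then says that whenever $\mathbf x$ annihilates at least $r_{\bar{\mathbf C}}-1$ columns of $\bar{\mathbf C}$, it can hit no more columns of $\mathbf C$ than of $\bar{\mathbf C}$. The first step would be a range containment: if some column $\mathbf c_j$ were not in $\textup{range}(\bar{\mathbf C})$, then, since $\textup{range}(\bar{\mathbf C})$ is the orthogonal complement of $\ker(\bar{\mathbf C}^T)$, I could pick $\mathbf x\in\ker(\bar{\mathbf C}^T)$ with $\mathbf c_j^T\mathbf x\neq 0$; such an $\mathbf x$ annihilates every column of $\bar{\mathbf C}$, so $\omega(\bar{\mathbf C}^T\mathbf x)=0$, which is at most the threshold $\bar R-r_{\bar{\mathbf C}}+1$, yet $\omega(\mathbf C^T\mathbf x)\geq 1$, contradicting the hypothesis. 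Hence $\textup{range}(\mathbf C)\subseteq\textup{range}(\bar{\mathbf C})=:V$, and because $\bar{\mathbf c}_i^T\mathbf x$ and $\mathbf c_j^T\mathbf x$ depend only on the projection of $\mathbf x$ onto $V$, I may reduce to $K=r_{\bar{\mathbf C}}=:r$ with $\bar{\mathbf C}$ of full row rank.

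The crux is a hyperplane inequality obtained by probing the configuration inside $V\cong\mathbb F^r$. Given any $r-1$ linearly independent columns of $\bar{\mathbf C}$, their span $H$ is a hyperplane; choosing $\mathbf x$ to span $H^\perp$, the columns of $\bar{\mathbf C}$ orthogonal to $\mathbf x$ are exactly those lying in $H$, so $\omega(\bar{\mathbf C}^T\mathbf x)\leq\bar R-(r-1)$ meets the threshold. Writing each $\omega$ as the total number of columns minus those lying in $H$ and invoking the hypothesis, I would obtain
\[
\#\{j:\mathbf c_j\in H\}\ \geq\ \#\{i:\bar{\mathbf c}_i\in H\}+(R-\bar R)\ \geq\ \#\{i:\bar{\mathbf c}_i\in H\}
\]
for every hyperplane $H$ spanned by columns of $\bar{\mathbf C}$. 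This single inequality, valid for all such $H$, is what drives the rest of the argument.

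The final step would convert the hyperplane inequality into an exact matching by a multiplicity count over the distinct column directions. For each direction occurring among the columns of $\bar{\mathbf C}$, a suitable choice of $H$ forces $\mathbf C$ to realize that direction with at least the same multiplicity; summing over all directions and comparing totals, $\sum(\text{mult. in }\bar{\mathbf C})=\bar R\leq R=\sum(\text{mult. in }\mathbf C)$, pins down $\bar R=R$, shows $\mathbf C$ carries no extra directions, and forces the multiplicities to agree. This yields a bijection $\bar{\mathbf c}_i=\lambda_i\mathbf c_{\pi(i)}$; collecting the $\pi(i)$ into a permutation matrix $\mathbf\Pi$ and the $\lambda_i$ into a diagonal $\mathbf\Lambda$ gives $\bar{\mathbf C}=\mathbf C\mathbf\Pi\mathbf\Lambda$, with $\mathbf\Lambda$ nonsingular because $k_{\mathbf C}\geq 1$ excludes zero columns, the matching of directions fixing the pair $(\mathbf\Pi,\mathbf\Lambda)$.

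The hard part will be this last step. The first two steps are essentially formal, but turning the hyperplane inequality into an exact correspondence with matching multiplicities is the genuine combinatorial heart of Kruskal's lemma. The delicate points are that a hyperplane spanned by $r-1$ independent columns need not isolate a single direction, that several columns may share a direction or lie accidentally in many hyperplanes at once, and that the slack $R-\bar R$ must be shown to vanish; carefully controlling these degeneracies is where the real work resides.
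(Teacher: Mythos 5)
First, note that the paper itself offers no proof of this lemma --- it is Kruskal's permutation lemma, stated with citations to Kruskal, Jiang--Sidiropoulos, and Stegeman--Sidiropoulos --- so there is no in-paper argument to compare against and your attempt must stand on its own. Your first two steps are sound. The range containment $\textup{range}(\mathbf C)\subseteq\textup{range}(\bar{\mathbf C})=:V$ follows exactly as you say, with the caveat that over $\mathbb C$ the ``orthogonal complement'' must be taken with respect to the bilinear pairing $\mathbf x^T\mathbf y$ rather than the Hermitian one, and the reduction to $K=r_{\bar{\mathbf C}}=:r$ is better done by factoring $\bar{\mathbf C}=\mathbf Q\bar{\mathbf C}'$, $\mathbf C=\mathbf Q\mathbf C'$ through a basis $\mathbf Q$ of $V$ and using surjectivity of $\mathbf Q^T$, since $V\oplus V^{\perp}=\mathbb F^K$ can fail for isotropic $V$ over $\mathbb C$. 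The hyperplane inequality $\#\{j:\mathbf c_j\in H\}\geq\#\{i:\bar{\mathbf c}_i\in H\}+(R-\bar R)$, for every hyperplane $H$ spanned by $r-1$ linearly independent columns of $\bar{\mathbf C}$, is correctly derived from the hypothesis.

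The gap is your third step, which is the entire content of the lemma, and your closing paragraph essentially concedes it. Two concrete problems. (a) For $r\geq 3$ there is no single ``suitable choice of $H$'' that isolates a column direction: any hyperplane through $\bar{\mathbf c}_i$ spanned by columns of $\bar{\mathbf C}$ contains columns pointing in many directions, and the inequality says nothing about \emph{which} directions the columns of $\mathbf C$ inside $H$ realize. Converting the count over hyperplanes into a per-direction multiplicity bound requires intersecting a whole family of such hyperplanes and an inclusion--exclusion or induction on the number of annihilated columns; this is precisely Kruskal's combinatorial argument, it occupies several pages in the cited sources, and it is not a routine verification. (b) Even granting the multiplicity inequality for every direction of $\bar{\mathbf C}$, ``summing over all directions and comparing totals'' only yields $R\geq\bar R$, which is already assumed; it neither pins down $\bar R=R$ nor excludes extra directions in $\mathbf C$. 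For that you must actually exploit the slack term $R-\bar R$ in the hyperplane inequality, or exhibit a vector $\mathbf x$ meeting the threshold for $\bar{\mathbf C}$ that an unmatched column of $\mathbf C$ survives, and you never do either. As it stands the proposal is a correct setup together with an accurate description of where the difficulty lies, but the difficulty itself is not resolved.
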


We start the derivation of \eqref{manyimplicationsm} with the proof of the following proposition.
\begin{proposition}\label{prmostgeneral}
Let $\mathbf A\in \mathbb F^{I\times R}$,  $\mathbf B\in \mathbb F^{J\times R}$,  $\mathbf C\in \mathbb F^{K\times R}$, and let
$\mathcal T=[\mathbf A, \mathbf B, \mathbf C]_R$.
Assume that
\begin{romannum}
\item $k_{\mathbf C}\geq 1$;
\item
$m=R-r_{\mathbf C}+2\leq\min(I, J)$;
\item condition $\text{\textup{(U{\scriptsize m})}}$ holds.
\end{romannum}
Then $r_{\mathcal T}=R$ and the  third factor matrix of $\mathcal T$  is unique.
\end{proposition}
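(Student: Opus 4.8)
The plan is to derive uniqueness of $\mathbf C$ from Kruskal's permutation lemma (Lemma~\ref{lemmapermutation}). First I would take an arbitrary CPD $\mathcal T=[\bar{\mathbf A},\bar{\mathbf B},\bar{\mathbf C}]_{\bar R}$, so that $\bar R=r_{\mathcal T}\leq R$, and apply Lemma~\ref{lemmapermutation} to the pair $(\bar{\mathbf C},\mathbf C)$; its standing hypotheses $\bar R\leq R$ and $k_{\mathbf C}\geq 1$ hold by~(i). The whole proof then reduces to verifying the lemma's one nontrivial hypothesis, namely that $\omega(\mathbf C^T\mathbf x)\leq\omega(\bar{\mathbf C}^T\mathbf x)$ for every $\mathbf x$ with $\omega(\bar{\mathbf C}^T\mathbf x)\leq\bar R-r_{\bar{\mathbf C}}+1$. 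Once this is done, the lemma yields $\bar R=R$ (hence $r_{\mathcal T}=R$) and $\bar{\mathbf C}=\mathbf C\mathbf\Pi\mathbf\Lambda$, which is exactly uniqueness of the third factor matrix. The device linking the two decompositions is that, for each $\mathbf x$, the $\mathbf x$-weighted combination of frontal slices of $\mathcal T$ equals both $\mathbf A\,\textup{\text{Diag}}(\mathbf C^T\mathbf x)\,\mathbf B^T$ and $\bar{\mathbf A}\,\textup{\text{Diag}}(\bar{\mathbf C}^T\mathbf x)\,\bar{\mathbf B}^T$ (cf.~\eqref{eqmatrtovec}), so these two matrices coincide and in particular share their rank.

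Before the verification I would extract two consequences of~(U{\scriptsize m}), both legitimate since $m\geq 2$ and $m\leq\min(I,J)$ by~(ii). By Lemma~\ref{compoundumuk} condition~(U{\scriptsize m}) implies~(U{\scriptsize 1}), so by Lemma~\ref{prop:C1U1} the matrix $\mathbf A\odot\mathbf B$ has full column rank; applied to the unfolding $\mathbf T=(\mathbf A\odot\mathbf B)\mathbf C^T$ in~\eqref{eqT_V} this gives $r_{\mathbf T}=r_{\mathbf C}$, while $\mathbf T=(\bar{\mathbf A}\odot\bar{\mathbf B})\bar{\mathbf C}^T$ forces $r_{\bar{\mathbf C}}\geq r_{\mathbf T}=r_{\mathbf C}$. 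Together with $\bar R\leq R$ this produces the key threshold inequality $\bar R-r_{\bar{\mathbf C}}+1\leq R-r_{\mathbf C}+1=m-1$. By Lemma~\ref{lemmaU_mKhR} I also get $\min(k_{\mathbf A},k_{\mathbf B})\geq m$.

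With these in hand the verification is short. Fix $\mathbf x$ with $\bar\omega:=\omega(\bar{\mathbf C}^T\mathbf x)\leq\bar R-r_{\bar{\mathbf C}}+1\leq m-1$. The slice $\bar{\mathbf A}\,\textup{\text{Diag}}(\bar{\mathbf C}^T\mathbf x)\,\bar{\mathbf B}^T$ has rank at most $\bar\omega\leq m-1$ (its middle factor has at most $\bar\omega$ nonzero diagonal entries), so the equal matrix $\mathbf A\,\textup{\text{Diag}}(\mathbf C^T\mathbf x)\,\mathbf B^T$ has rank $\leq m-1$ as well. The equivalent form of~(U{\scriptsize m}) in Lemma~\ref{lemma:equivUm} then gives $\omega(\mathbf C^T\mathbf x)\leq m-1\leq\min(k_{\mathbf A},k_{\mathbf B})$. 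But when $\omega(\mathbf C^T\mathbf x)$ does not exceed $\min(k_{\mathbf A},k_{\mathbf B})$, the corresponding columns of $\mathbf A$ and of $\mathbf B$ are linearly independent, and the Sylvester rank inequality applied over the support of $\mathbf C^T\mathbf x$ (exactly as in the proof of Lemma~\ref{HmUm}) forces $r_{\mathbf A\,\textup{\text{Diag}}(\mathbf C^T\mathbf x)\,\mathbf B^T}=\omega(\mathbf C^T\mathbf x)$. Chaining the bounds yields $\omega(\mathbf C^T\mathbf x)\leq\bar\omega=\omega(\bar{\mathbf C}^T\mathbf x)$, which is the required inequality.

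The step I expect to be the crux is the threshold comparison $\bar R-r_{\bar{\mathbf C}}+1\leq m-1$. It is what converts the rank bound supplied by the competing decomposition (whose free parameters $\bar R$ and $r_{\bar{\mathbf C}}$ are a priori unknown) into a bound of the form ``$\leq m-1$'' to which condition~(U{\scriptsize m}) can be applied, and it hinges on the inequality $r_{\bar{\mathbf C}}\geq r_{\mathbf C}$, i.e.\ on $\mathbf A\odot\mathbf B$ having full column rank. Everything else---the slice identity, the passage through Lemma~\ref{lemma:equivUm}, and the Sylvester lower bound---is routine once that comparison is available.
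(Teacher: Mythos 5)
Your proposal is correct, and its skeleton coincides with the paper's: both invoke Kruskal's permutation lemma (Lemma~\ref{lemmapermutation}), both obtain the key threshold $\bar R-r_{\bar{\mathbf C}}+1\leq m-1$ from the full column rank of $\mathbf A\odot\mathbf B$ (itself a consequence of \textup{(U{\scriptsize 1})} via Lemma~\ref{compoundumuk}), and both exploit the slice identity $\mathbf A\,\textup{\text{Diag}}(\mathbf C^T\mathbf x)\,\mathbf B^T=\bar{\mathbf A}\,\textup{\text{Diag}}(\bar{\mathbf C}^T\mathbf x)\,\bar{\mathbf B}^T$. Where you diverge is in verifying $\omega(\mathbf C^T\mathbf x)\leq\omega(\bar{\mathbf C}^T\mathbf x)$. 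The paper sets $k-1:=\omega(\bar{\mathbf C}^T\mathbf x)$, notes $k\leq m$, shows $\mathcal C_k$ of the common slice vanishes, and then applies condition \textup{(U{\scriptsize k})} at exactly that level $k$ --- so it genuinely needs the full descending chain $\textup{(U{\scriptsize m})}\Rightarrow\dots\Rightarrow\textup{(U{\scriptsize 2})}$ from Lemma~\ref{compoundumuk}. You instead apply only \textup{(U{\scriptsize m})} itself (through Lemma~\ref{lemma:equivUm}) to get $\omega(\mathbf C^T\mathbf x)\leq m-1$, and then upgrade this to the exact identity $r_{\mathbf A\,\textup{\text{Diag}}(\mathbf C^T\mathbf x)\,\mathbf B^T}=\omega(\mathbf C^T\mathbf x)$ using $\min(k_{\mathbf A},k_{\mathbf B})\geq m$ (Lemma~\ref{lemmaU_mKhR}) and the Sylvester rank inequality, after which the comparison with $\omega(\bar{\mathbf C}^T\mathbf x)$ is a one-line rank bound on the bar side. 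This is a legitimate alternative closing move: it trades the intermediate conditions \textup{(U{\scriptsize k})}, $2\leq k<m$, for the $k$-rank bound plus a standard rank computation, and is arguably more self-contained at that step; the paper's route, on the other hand, is the one that transfers verbatim to the weaker hypothesis \textup{(W{\scriptsize m})},\dots,\textup{(W{\scriptsize 1})} in Proposition~\ref{prmostgeneraldis}, where the $k$-rank bound of Lemma~\ref{lemmaU_mKhR} is not available. No gaps.
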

\begin{proof}
Let $\mathcal T=[\bar{\mathbf A}, \bar{\mathbf B}, \bar{\mathbf C}]_{\bar R}$ be a CPD of $\mathcal T$, which implies $\bar{R}\leq R$. We have $(\mathbf A\odot\mathbf B)\mathbf C^T=(\bar{\mathbf A}\odot\bar{\mathbf B})\bar{\mathbf C}^T$. We check that the conditions of Lemma \ref{lemmapermutation} are satisfied. From Lemma \ref{compoundumuk} it follows that conditions $\text{\textup{(U{\scriptsize  m-1})}}, \dots, \text{\textup{(U{\scriptsize 2})}}, \text{\textup{(U{\scriptsize 1})}}$ hold.
The fact that $\text{\textup{(U{\scriptsize 1})}}$ holds, means that
$\mathbf A\odot\mathbf B$ has full column rank. Hence,
\begin{equation}\label{star}
r_{\mathbf C}=r_{\mathbf C^T}=r_{(\mathbf A\odot\mathbf B)\mathbf C^T}=
r_{(\bar{\mathbf A}\odot\bar{\mathbf B})\bar{\mathbf C}^T}\leq
r_{\bar{\mathbf C}^T}=r_{\bar{\mathbf C}}.
\end{equation}
Consider any vector $\mathbf x\in\mathbb F^K$ such that
$$
\omega(\bar{\mathbf C}^T\mathbf x) := k-1\leq \bar{R}-r_{\bar{\mathbf C}}+1,
$$
as in Lemma \ref{lemmapermutation}.
Then $r_{\bar{\mathbf A}\textup{\text{Diag}}(\bar{\mathbf C}^T\mathbf x)\bar{\mathbf B}^T}\leq k-1$ and, by \eqref{star},
$$
\bar{R}-r_{\bar{\mathbf C}}+1\leq R-r_{\mathbf C}+1=m-1,
$$
which implies $k \leq m$. We have
$(\mathbf A\odot\mathbf B)\mathbf C^T\mathbf x=(\bar{\mathbf A}\odot\bar{\mathbf B})\bar{\mathbf C}^T\mathbf x$ so
$
\mathbf A\textup{\text{Diag}}(\mathbf C^T\mathbf x)\mathbf B^T=
\bar{\mathbf A}\textup{\text{Diag}}(\bar{\mathbf C}^T\mathbf x)\bar{\mathbf B}^T.
$
From Lemma \ref{LemmaCompound}(1) it follows
\begin{eqnarray*}
\mathcal C_k( \mathbf A\textup{\text{Diag}}(\mathbf C^T\mathbf x)\mathbf B^T  ) & = &
\mathcal C_k(\bar{\mathbf A}\textup{\text{Diag}}(\bar{\mathbf C}^T\mathbf x)\bar{\mathbf B}^T) \\
& = & \mathcal C_k(\bar{\mathbf A})\mathcal C_k(\textup{\text{Diag}}(\bar{\mathbf C}^T\mathbf x))\mathcal C_k(\bar{\mathbf B}^T) \\
& = &\mzero,
\end{eqnarray*}
in which the latter equality follows from Lemma \ref{Lemma2.3}(1).
Hence, by Lemma \ref{cor2.6},
$$
(\mathcal C_{k}(\mathbf A )\odot\mathcal C_k(\mathbf B))\hatdSmR{k}{R}=\vzero
$$
for $\mathbf d:=\mathbf C^T\mathbf x\in\mathbb F^R$.
Since condition $\text{\textup{(U{\scriptsize k})}}$ holds for $\mathbf A$ and $\mathbf B$,  it follows that
$\omega(\mathbf C^T\mathbf x)\leq k-1=\omega(\bar{\mathbf C}^T\mathbf x)$. Hence, by  Lemma \ref{lemmapermutation},
$\bar{R}=R$ and the matrices $\mathbf C$ and $\bar{\mathbf C}$ are the same up to permutation and column scaling.
\qquad\end{proof}

The implications $\text{\textup{(C{\scriptsize m})}} \Rightarrow \text{\textup{(U{\scriptsize m})}}$ and
$\text{\textup{(H{\scriptsize m})}} \Rightarrow \text{\textup{(U{\scriptsize m})}}$
in scheme \eqref{maindiagramintro} lead to Corollary \ref{corr111} and to Theorem \ref{theoremKruskalnew3}, respectively.
The implication
$\text{\textup{(K{\scriptsize m})}} \Rightarrow \text{\textup{(C{\scriptsize m})}}$ in turn leads to Corollary \ref{corrpukr}. Clearly, conditions $\text{\textup{(C{\scriptsize m})}}$, $\text{\textup{(H{\scriptsize m})}}$, and $\text{\textup{(K{\scriptsize m})}}$ are more restrictive than $\text{\textup{(U{\scriptsize m})}}$. On the other hand, they may be easier to verify.

\begin{corollary}\label{corr111}
Let $\mathbf A\in \mathbb F^{I\times R}$,  $\mathbf B\in \mathbb F^{J\times R}$,  $\mathbf C\in \mathbb F^{K\times R}$, and let $\mathcal T=[\mathbf A, \mathbf B, \mathbf C]_R$. Assume that
\begin{romannum}
\item $k_{\mathbf C}\geq 1$;
\item
$m=R-r_{\mathbf C}+2\leq\min(I, J)$;
\item
$\mathcal C_{m}(\mathbf A)\odot \mathcal C_{m}(\mathbf B) \text{ has full column rank}$.\hfill \text{\textup{(C{\scriptsize m})}}
\end{romannum}
Then $r_{\mathcal T}=R$ and the third factor matrix of  $\mathcal T$  is unique.
\end{corollary}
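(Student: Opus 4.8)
The plan is to obtain this corollary as an immediate consequence of Proposition~\ref{prmostgeneral}, by showing that hypothesis~(iii) here---condition $\text{\textup{(C{\scriptsize m})}}$---forces hypothesis~(iii) of that proposition, namely condition $\text{\textup{(U{\scriptsize m})}}$. Indeed, hypotheses~(i) and~(ii) of the corollary coincide verbatim with hypotheses~(i) and~(ii) of Proposition~\ref{prmostgeneral}, so only the third hypothesis needs to be bridged.

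First I would verify that the integer $m=R-r_{\mathbf C}+2$ lies in the range $2\leq m\leq\min(I,J)$ demanded by Lemma~\ref{C_mU_m}. The upper bound is exactly hypothesis~(ii). For the lower bound, the rank of the $K\times R$ matrix $\mathbf C$ satisfies $r_{\mathbf C}\leq R$, whence $m=R-r_{\mathbf C}+2\geq 2$. With $m$ in this range, Lemma~\ref{C_mU_m} applies and yields that condition $\text{\textup{(C{\scriptsize m})}}$ implies condition $\text{\textup{(U{\scriptsize m})}}$ for the pair $(\mathbf A,\mathbf B)$. Hence hypothesis~(iii) of the corollary guarantees that condition $\text{\textup{(U{\scriptsize m})}}$ holds.

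All three hypotheses of Proposition~\ref{prmostgeneral} are therefore in force, and invoking that proposition gives $r_{\mathcal T}=R$ together with uniqueness of the third factor matrix, as required.

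The argument carries no genuine obstacle: the substantive work---the application of Kruskal's permutation lemma (Lemma~\ref{lemmapermutation}) and the descent through the chain $\text{\textup{(U{\scriptsize m})}}\Rightarrow\dots\Rightarrow\text{\textup{(U{\scriptsize 1})}}$ (Lemma~\ref{compoundumuk})---has already been discharged inside the proof of Proposition~\ref{prmostgeneral}. The only point to watch is the trivial bookkeeping that $m\geq 2$, which ensures Lemma~\ref{C_mU_m} is actually applicable; everything else is a direct citation.
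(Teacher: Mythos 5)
Your proposal is correct and matches the paper's own route exactly: the paper derives Corollary~\ref{corr111} from Proposition~\ref{prmostgeneral} via the implication $\textup{(C{\scriptsize m})}\Rightarrow\textup{(U{\scriptsize m})}$ of Lemma~\ref{C_mU_m}. Your extra check that $m\geq 2$ (so that Lemma~\ref{C_mU_m} applies) is a sensible, harmless addition.
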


\begin{corollary}\label{corrpukr}
Let $\mathbf A\in \mathbb F^{I\times R}$,  $\mathbf B\in \mathbb F^{J\times R}$,  $\mathbf C\in \mathbb F^{K\times R}$, and
$\mathcal T=[\mathbf A, \mathbf B, \mathbf C]_R$.
Let also $m:=R-r_{\mathbf C}+2$. If
\begin{equation}
\label{eqlongform}
\left\{
  \begin{array}{rl}
    r_{\mathbf A}+k_{\mathbf B}&\geq R+m,\\
    k_{\mathbf A}&\geq m,\\
     k_{\mathbf C}&\geq 1
  \end{array}
\right.
\qquad\text{ or }\qquad
\left\{
  \begin{array}{rl}
      r_{\mathbf B}+k_{\mathbf A}&\geq R+m,\\
      k_{\mathbf B}&\geq m,\\
      k_{\mathbf C}&\geq 1,
  \end{array}
\right.
\end{equation}
then $r_{\mathcal T}=R$ and the third  factor matrix of $\mathcal T$  is unique.
\end{corollary}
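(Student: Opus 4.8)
The plan is to recognize that the hypothesis \eqref{eqlongform} is nothing but condition \cond{K}{m} together with $k_{\mathbf C}\geq 1$, and then to feed this into the machinery already assembled in scheme \eqref{maindiagramintro} and in Corollary \ref{corr111}. Concretely, I would chain the implication $\cond{K}{m}\Rightarrow\cond{C}{m}$ (Lemma \ref{compoundkhrkrusk}) and then invoke Corollary \ref{corr111}, whose three hypotheses are $k_{\mathbf C}\geq 1$, the dimension bound $m=R-r_{\mathbf C}+2\leq\min(I,J)$, and \cond{C}{m}.

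The only hypothesis of Corollary \ref{corr111} not literally present in \eqref{eqlongform} is the dimension bound $m\leq\min(I,J)$, so the first thing I would do is check that it is automatically implied. Treating the left branch of \eqref{eqlongform} (the right branch follows by interchanging the roles of $\mathbf A$ and $\mathbf B$), I would argue exactly as in the opening lines of the proof of Lemma \ref{compoundkhrkrusk}: since $k_{\mathbf A}\geq m$ and always $k_{\mathbf A}\leq r_{\mathbf A}\leq I$, we get $m\leq I$; and since $\mathbf A$ has only $R$ columns we have $r_{\mathbf A}\leq R$, so $r_{\mathbf A}+k_{\mathbf B}\geq R+m$ forces $k_{\mathbf B}\geq m$, whence $m\leq k_{\mathbf B}\leq r_{\mathbf B}\leq J$. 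Thus $m\leq\min(I,J)$ holds for free, and in particular the compound matrices $\mathcal C_m(\mathbf A)$ and $\mathcal C_m(\mathbf B)$ are well defined.

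With the dimension bound secured, the remaining steps are immediate applications of earlier results: Lemma \ref{compoundkhrkrusk} upgrades \cond{K}{m} to \cond{C}{m}, and Corollary \ref{corr111} then yields $r_{\mathcal T}=R$ together with uniqueness of the third factor matrix. Equivalently, one could route through Lemma \ref{C_mU_m} to obtain \cond{U}{m} from \cond{C}{m} and then apply Proposition \ref{prmostgeneral} directly; I would mention this alternative since it makes the logical dependence on scheme \eqref{maindiagramintro} transparent.

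Since every substantive implication has already been established, I do not expect a genuine analytic obstacle here; the statement is essentially a composition of the building blocks of \S\ref{Section3} and \S\ref{Section4}. The single point that genuinely deserves attention—and is easy to overlook—is precisely the implicit verification that \cond{K}{m} entails $m\leq\min(I,J)$, which is exactly what licenses the use of $m$-th compound matrices and makes Corollary \ref{corr111} applicable.
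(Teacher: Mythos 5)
Your proposal is correct and follows exactly the route the paper intends: recognize \eqref{eqlongform} as \cond{K}{m} together with $k_{\mathbf C}\geq 1$, apply Lemma \ref{compoundkhrkrusk} to obtain \cond{C}{m}, and conclude via Corollary \ref{corr111}; your verification that \cond{K}{m} forces $m\leq\min(I,J)$ is precisely the point the paper covers in the remark following the corollary. Nothing is missing.
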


\begin{remark}
Condition $\textup{(ii)}$ in  Proposition \ref{prmostgeneral} and Corollary \ref{corr111} guarantees that the matrices $\mathcal C_{m}(\mathbf A)$ and $\mathcal C_{m}(\mathbf B)$ are defined.
In Corollary \ref{corrpukr}, \textup{(K{\scriptsize m})} cannot hold if $m=R-r_{\mathbf C}+2 > \min(I, J)$.
\end{remark}

\begin{corollary}\label{corrpukrnext}
Let $\mathbf A\in \mathbb F^{I\times R}$,  $\mathbf B\in \mathbb F^{J\times R}$,  $\mathbf C\in \mathbb F^{K\times R}$, and
$\mathcal T=[\mathbf A, \mathbf B, \mathbf C]_R$.
Let also $m:=R-r_{\mathbf C}+2$. If
\begin{equation}
\label{eqshortform}
\begin{cases}
k_{\mathbf C}\geq 1,\\
\min(k_{\mathbf A}, k_{\mathbf B})\geq m,\\
\max(r_{\mathbf A}+k_{\mathbf B},  r_{\mathbf B}+k_{\mathbf A})\geq R+m,
\end{cases}
\end{equation}
then $r_{\mathcal T}=R$ and the third factor matrix  of $\mathcal T$  is unique.
\end{corollary}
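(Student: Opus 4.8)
The plan is to derive this corollary immediately from Corollary~\ref{corrpukr}, by showing that the hypothesis \eqref{eqshortform} entails the hypothesis \eqref{eqlongform}. The two conditions share the requirement $k_{\mathbf C}\geq 1$ and differ only in their treatment of the $k$-ranks of $\mathbf A$ and $\mathbf B$: \eqref{eqlongform} demands, in each of its two branches, a single $k$-rank bounded below by $m$, whereas \eqref{eqshortform} packages a symmetric, more restrictive requirement using $\min$ and $\max$.

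First I would unpack the inequalities in \eqref{eqshortform}. The condition $\min(k_{\mathbf A}, k_{\mathbf B})\geq m$ is equivalent to $k_{\mathbf A}\geq m$ and $k_{\mathbf B}\geq m$ holding simultaneously; in particular, since $k_{\mathbf A}\leq I$ and $k_{\mathbf B}\leq J$, this already forces $m\leq\min(I,J)$, so that the compound matrices appearing in the underlying arguments are well defined. The condition $\max(r_{\mathbf A}+k_{\mathbf B}, r_{\mathbf B}+k_{\mathbf A})\geq R+m$ is equivalent to the disjunction that at least one of $r_{\mathbf A}+k_{\mathbf B}\geq R+m$ or $r_{\mathbf B}+k_{\mathbf A}\geq R+m$ holds.

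Next I would split into the two cases dictated by this disjunction. If $r_{\mathbf A}+k_{\mathbf B}\geq R+m$, then combining it with $k_{\mathbf A}\geq m$ and $k_{\mathbf C}\geq 1$ reproduces the left-hand system of \eqref{eqlongform}. Symmetrically, if $r_{\mathbf B}+k_{\mathbf A}\geq R+m$, then together with $k_{\mathbf B}\geq m$ and $k_{\mathbf C}\geq 1$ we recover the right-hand system of \eqref{eqlongform}. In either case \eqref{eqlongform} holds, and Corollary~\ref{corrpukr} then yields $r_{\mathcal T}=R$ together with uniqueness of the third factor matrix.

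There is no substantial obstacle here; the only point requiring attention is the elementary bookkeeping that the $\min$/$\max$ packaging of \eqref{eqshortform} genuinely implies the branchwise conjunctions of \eqref{eqlongform}. I note in passing that \eqref{eqshortform} is in fact strictly stronger than \eqref{eqlongform}, since it forces both $k_{\mathbf A}$ and $k_{\mathbf B}$ to attain $m$; its advantage is the cleaner, more symmetric appearance, which makes it easier to state and to verify in practice.
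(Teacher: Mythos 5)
Your proof is correct and follows the same route as the paper: Corollary~\ref{corrpukrnext} is deduced from Corollary~\ref{corrpukr} by unpacking the $\min$/$\max$ conditions, and only the implication \eqref{eqshortform}~$\Rightarrow$~\eqref{eqlongform} is needed, which you establish cleanly. One correction to your closing remark, though: \eqref{eqshortform} is \emph{not} strictly stronger than \eqref{eqlongform} --- the two are equivalent, because in the left branch of \eqref{eqlongform} the inequality $r_{\mathbf A}+k_{\mathbf B}\geq R+m$ together with $r_{\mathbf A}\leq R$ already forces $k_{\mathbf B}\geq m$ (and symmetrically for the right branch), so each branch of \eqref{eqlongform} implies \eqref{eqshortform} as well; this equivalence is what the paper asserts and later uses in Remark~\ref{remark_unimode}.
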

\begin{proof}
It can easily be checked that \eqref{eqshortform} and \eqref{eqlongform} are equivalent.
\end{proof}
\begin{remark}\label{remark_unimode}
It is easy to see that Corollaries \ref{corrpukr} and \ref{corrpukrnext} are equivalent to Theorem \ref{theorem_uni-mode}. Indeed,
if $m=R-r_{\mathbf C}+2$, then
\begin{align*}
\eqref{unique_one_new_paper}\Leftrightarrow
\begin{cases}
k_{\mathbf C}\geq 1,\\
\min(k_{\mathbf A}, k_{\mathbf B})\geq m,\\
k_{\mathbf A}+k_{\mathbf B}+\max(r_{\mathbf A}-k_{\mathbf A}, r_{\mathbf B}-k_{\mathbf B})\geq R+m
\end{cases}\Leftrightarrow \eqref{eqshortform}\Leftrightarrow \eqref{eqlongform}.
\end{align*}
\end{remark}

\subsection{Conditions  based on (W{\scriptsize m})}\label{Appendix A}

In this subsection  we deal with condition
$\text{\textup{(W{\scriptsize m})}}$. Similar to condition $\text{\textup{(U{\scriptsize m})}}$ in Proposition \ref{prmostgeneral}, condition $\text{\textup{(W{\scriptsize m})}}$ will in Proposition \ref{prmostgeneraldis} imply the uniqueness of one factor matrix. However,  condition $\text{\textup{(W{\scriptsize m})}}$ is more relaxed than condition $\text{\textup{(U{\scriptsize m})}}$.
Like condition $\text{\textup{(U{\scriptsize m})}}$, condition $\text{\textup{(W{\scriptsize m})}}$ may be hard to check. We give an example in which the uniqueness of one factor matrix can nevertheless be demonstrated using condition $\text{\textup{(W{\scriptsize m})}}$.

\begin{proposition}\label{prmostgeneraldis}
Let $\mathbf A\in \mathbb F^{I\times R}$,  $\mathbf B\in \mathbb F^{J\times R}$,  $\mathbf C\in \mathbb F^{K\times R}$, and let
$\mathcal T=[\mathbf A, \mathbf B, \mathbf C]_R$.
Assume that
\begin{romannum}
\item $k_{\mathbf C}\geq 1$;
\item
$m=R-r_{\mathbf C}+2\leq\min(I, J)$;
\item $\mathbf A\odot\mathbf B$ has full column rank;
\item
conditions $\text{\textup{(W{\scriptsize m})}},\dots,\text{\textup{(W{\scriptsize 1})}}$ hold.
\end{romannum}
Then $r_{\mathcal T}=R$ and the third factor matrix  of $\mathcal T$  is unique.
\end{proposition}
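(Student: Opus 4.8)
The plan is to mirror the proof of Proposition \ref{prmostgeneral} almost line for line, the single essential change being that the range-restricted conditions $\text{\textup{(W{\scriptsize k})}}$ replace the unrestricted conditions $\text{\textup{(U{\scriptsize k})}}$ at the decisive step. Concretely, I would let $\mathcal T=[\bar{\mathbf A}, \bar{\mathbf B}, \bar{\mathbf C}]_{\bar R}$ be an arbitrary CPD of $\mathcal T$, so that $\bar R\leq R$ and $(\mathbf A\odot\mathbf B)\mathbf C^T=(\bar{\mathbf A}\odot\bar{\mathbf B})\bar{\mathbf C}^T$, and aim to verify the hypotheses of Kruskal's permutation lemma (Lemma \ref{lemmapermutation}) for the pair $(\bar{\mathbf C},\mathbf C)$.

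First I would invoke assumption (iii) directly. Since $\mathbf A\odot\mathbf B$ has full column rank, left multiplication by it preserves rank, so
\[
r_{\mathbf C}=r_{(\mathbf A\odot\mathbf B)\mathbf C^T}=r_{(\bar{\mathbf A}\odot\bar{\mathbf B})\bar{\mathbf C}^T}\leq r_{\bar{\mathbf C}},
\]
whence $\bar R-r_{\bar{\mathbf C}}+1\leq R-r_{\mathbf C}+1=m-1$. It is worth stressing that, unlike in Proposition \ref{prmostgeneral}, assumption (iii) cannot here be replaced by $\text{\textup{(W{\scriptsize 1})}}$: the latter only forces the kernel of $\mathbf A\odot\mathbf B$ to meet $\textup{range}(\mathbf C^T)$ trivially, which is strictly weaker than full column rank. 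This is precisely why full column rank of $\mathbf A\odot\mathbf B$ is postulated as a separate hypothesis.

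Next, for any $\mathbf x\in\mathbb F^K$ with $\omega(\bar{\mathbf C}^T\mathbf x)=:k-1\leq\bar R-r_{\bar{\mathbf C}}+1$, the displayed bound forces $k\leq m$. Setting $\mathbf d:=\mathbf C^T\mathbf x$, the identity $\mathbf A\textup{\text{Diag}}(\mathbf C^T\mathbf x)\mathbf B^T=\bar{\mathbf A}\textup{\text{Diag}}(\bar{\mathbf C}^T\mathbf x)\bar{\mathbf B}^T$ together with Lemma \ref{LemmaCompound}(1) and Lemma \ref{Lemma2.3}(1) (the latter applicable since $\omega(\bar{\mathbf C}^T\mathbf x)=k-1<k$) gives $\mathcal C_k(\mathbf A\textup{\text{Diag}}(\mathbf d)\mathbf B^T)=\mzero$, and Lemma \ref{cor2.6} converts this into $(\mathcal C_k(\mathbf A)\odot\mathcal C_k(\mathbf B))\hatdSmR{k}{R}=\vzero$.

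The decisive point, and the only genuine departure from the proof of Proposition \ref{prmostgeneral}, is that $\mathbf d=\mathbf C^T\mathbf x$ lies in $\textup{range}(\mathbf C^T)$ by construction. Consequently condition $\text{\textup{(W{\scriptsize k})}}$, which is assumed in (iv) for every $k\leq m$, is exactly strong enough to conclude $\hatdSmR{k}{R}=\vzero$, i.e.\ $\omega(\mathbf C^T\mathbf x)\leq k-1=\omega(\bar{\mathbf C}^T\mathbf x)$. This verifies the hypothesis of Lemma \ref{lemmapermutation}, which then yields $\bar R=R$ and that $\mathbf C$ and $\bar{\mathbf C}$ coincide up to column permutation and scaling, giving $r_{\mathcal T}=R$ and uniqueness of the third factor matrix. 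The main subtlety, and the reason the statement lists all of $\text{\textup{(W{\scriptsize 1})}},\dots,\text{\textup{(W{\scriptsize m})}}$ as hypotheses rather than deriving them from $\text{\textup{(W{\scriptsize m})}}$ alone, is that the descent Lemma \ref{compoundwmwk} for the $\text{\textup{(W)}}$ conditions requires $\min(k_{\mathbf A},k_{\mathbf B})\geq m-1$, a bound not available under the present hypotheses; hence each $\text{\textup{(W{\scriptsize k})}}$ must be postulated independently so that it is at hand for the particular value of $k$ arising above.
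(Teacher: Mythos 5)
Your proposal is correct and follows the paper's own route exactly: the paper proves this proposition by declaring it ``analogous to the proof of Proposition \ref{prmostgeneral}'' with precisely the two modifications you identify, namely that full column rank of $\mathbf A\odot\mathbf B$ comes from hypothesis (iii) rather than from $(\textup{U{\scriptsize 1}})$, and that $\omega(\mathbf C^T\mathbf x)\leq\omega(\bar{\mathbf C}^T\mathbf x)$ follows from $(\textup{W{\scriptsize k}})$ applied to $\mathbf d=\mathbf C^T\mathbf x\in\textup{range}(\mathbf C^T)$ instead of from $(\textup{U{\scriptsize k}})$. Your closing remark on why each $(\textup{W{\scriptsize k}})$ must be postulated separately is also consistent with the paper's treatment (cf.\ Lemma \ref{compoundwmwk} and Corollary \ref{Corrolary4.9}).
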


\begin{proof}
The proof is analogous to the proof of Proposition \ref{prmostgeneral}, with two points of difference. Namely, the fact that $\mathbf A\odot\mathbf B$ has full column rank does not follow from $\text{\textup{(W{\scriptsize 1})}}$ but is assumed in condition (iii). Second, $\omega(\mathbf C^T\mathbf x)\leq \omega(\bar{\mathbf C}^T\mathbf x)$ follows from $\text{\textup{(W{\scriptsize k})}}$ instead of $\text{\textup{(U{\scriptsize k})}}$.\qquad
\end{proof}

From Lemmas \ref{compoundumuk} and \ref{PropositionA1} it follows that Proposition \ref{prmostgeneraldis} is more relaxed than Proposition \ref{prmostgeneral}.

Combining Proposition \ref{prmostgeneraldis} and Lemma \ref{compoundwmwk} we obtain the following result, which completes the derivation of
scheme \eqref{manyimplicationsm}.
\begin{corollary}\label{Corrolary4.9}
Let $\mathbf A\in \mathbb F^{I\times R}$,  $\mathbf B\in \mathbb F^{J\times R}$,  $\mathbf C\in \mathbb F^{K\times R}$, and let
$\mathcal T=[\mathbf A, \mathbf B, \mathbf C]_R$.
Assume that
\begin{romannum}
\item $k_{\mathbf C}\geq 1$;
\item
$m=R-r_{\mathbf C}+2\leq\min(I, J)$;
\item $\min (k_{\mathbf A},k_{\mathbf B})\geq m-1$;
\item $\mathbf A\odot\mathbf B$ has full column rank;
\item
condition $\text{\textup{(W{\scriptsize m})}}$ holds.
\end{romannum}
Then $r_{\mathcal T}=R$ and the third factor matrix of $\mathcal T$  is unique.
\end{corollary}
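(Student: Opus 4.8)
The plan is to deduce the corollary from Proposition \ref{prmostgeneraldis} by first upgrading the single hypothesis \textup{(W{\scriptsize m})} into the full chain \textup{(W{\scriptsize m})},\dots,\textup{(W{\scriptsize 1})} that the proposition requires. The engine for this upgrade is Lemma \ref{compoundwmwk}, whose hypotheses are precisely $1<m\leq\min(I,J)$ together with $\min(k_{\mathbf A},k_{\mathbf B})\geq m-1$; both are available here.

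First I would verify that $m\geq 2$. Since $\mathbf C$ has $R$ columns we have $r_{\mathbf C}\leq R$, so $m=R-r_{\mathbf C}+2\geq 2$. Combined with assumption (ii) this yields $1<m\leq\min(I,J)$, which is the dimensional hypothesis of Lemma \ref{compoundwmwk}, and assumption (iii) supplies the remaining hypothesis $\min(k_{\mathbf A},k_{\mathbf B})\geq m-1$. Applying Lemma \ref{compoundwmwk} to the condition \textup{(W{\scriptsize m})} of assumption (v) then produces conditions \textup{(W{\scriptsize m-1})},\dots,\textup{(W{\scriptsize 1})}, so that the entire list \textup{(W{\scriptsize m})},\dots,\textup{(W{\scriptsize 1})} is in force.

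It remains to invoke Proposition \ref{prmostgeneraldis}. Its four hypotheses are now all satisfied: $k_{\mathbf C}\geq 1$ and $m=R-r_{\mathbf C}+2\leq\min(I,J)$ are assumptions (i) and (ii); the full column rank of $\mathbf A\odot\mathbf B$ is assumption (iv); and conditions \textup{(W{\scriptsize m})},\dots,\textup{(W{\scriptsize 1})} were just established. The proposition delivers exactly the desired conclusion, namely $r_{\mathcal T}=R$ and uniqueness of the third factor matrix of $\mathcal T$.

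The argument is a clean composition of two earlier results, so I do not anticipate any substantive difficulty. The only points meriting a moment's care are bookkeeping: confirming $m\geq 2$ so that Lemma \ref{compoundwmwk} genuinely generates the nonempty chain below \textup{(W{\scriptsize m})}, and noting that the full column rank of $\mathbf A\odot\mathbf B$ must be assumed separately (it does \emph{not} follow from \textup{(W{\scriptsize 1})}), which is why it appears as the independent hypothesis (iv). It is also worth observing why the resulting corollary is more relaxed than Proposition \ref{prmostgeneral}: there, \textup{(U{\scriptsize m})} forces $\min(k_{\mathbf A},k_{\mathbf B})\geq m$ via Lemma \ref{lemmaU_mKhR}, whereas here we assume only the weaker condition \textup{(W{\scriptsize m})} together with the weaker $k$-rank bound $\min(k_{\mathbf A},k_{\mathbf B})\geq m-1$.
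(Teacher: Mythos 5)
Your proposal is correct and matches the paper's own derivation: the paper obtains Corollary \ref{Corrolary4.9} precisely by combining Lemma \ref{compoundwmwk} (to pass from \textup{(W{\scriptsize m})} to the full chain \textup{(W{\scriptsize m})},\dots,\textup{(W{\scriptsize 1})} under the hypothesis $\min(k_{\mathbf A},k_{\mathbf B})\geq m-1$) with Proposition \ref{prmostgeneraldis}. Your additional checks that $m\geq 2$ and that the full column rank of $\mathbf A\odot\mathbf B$ must be assumed separately are sound bookkeeping.
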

\begin{example}
Let $\mathcal T=[\mathbf A,\mathbf B,\mathbf C]_7$, with
\begin{align*}
&\mathbf A=
\left[
\begin{matrix}
1&	1&	0&	0&	0&	0&	0\\
1&	0&	1&	0&	0&	0&	0\\
1&	0&	0&	1&	0&	0&	0\\
1&	0&	0&	0&	1&	0&	0\\
0&	0&	0&	0&	0&	1&	0\\
0&	0&	0&	0&	0&	0&	1
\end{matrix}
\right],\qquad
\mathbf B=
\left[
\begin{matrix}
0&	1&	0&	0&	0&	0&	0\\
0&	0&	1&	0&	0&	0&	0\\
1&	0&	0&	1&	0&	0&	0\\
1&	0&	0&	0&	1&	0&	0\\
1&	0&	0&	0&	0&	1&	0\\
1&	0&	0&	0&	0&	0&	1
\end{matrix}
\right],\\
&\mathbf C=
\left[
\begin{matrix}
1&	0&	0&	1&	0&	0&	0\\
0&	1&	0&	0&	1&	0&	0\\
0&	0&	1&	0&	0&	1&	0\\
1&	0&	0&	0&	0&	0&	1
\end{matrix}
\right].
\end{align*}
We have
$$
k_{\mathbf A}=k_{\mathbf B}=4,\quad r_{\mathbf A}=r_{\mathbf B}=6,\quad k_{\mathbf C}=1,\quad r_{\mathbf C}=4,\quad m=5.
$$
Since $\min(k_{\mathbf A},k_{\mathbf B})<m$, it follows from Lemma \ref{lemmaU_mKhR} that  condition $\text{\textup{(U{\scriptsize m})}}$ does not hold.
We show that, on the other hand, condition $\text{\textup{(W{\scriptsize m})}}$  does  hold.
One can easily check that the rank of the $36\times 21$  matrix $\mathbf U=\mathcal C_{5}(\mathbf A)\odot \mathcal C_5(\mathbf B)$
is equal to $19$. Obviously, both the $(1,2,3,4,5)$-th  and the $(1,4,5,6,7)$-th  column of $\mathbf U$ are equal to zero.
Hence, if $\mathbf U\hatdSmR{5}{7}=\vzero$ for $\mathbf d=\left[\begin{matrix} d_1&\dots&d_7\end{matrix}\right]$,
then at most the  two entries $d_1d_2d_3d_4d_5$ and $d_1d_4d_5d_6d_7$  of the vector $\hatdSmR{5}{7}$ are nonzero. Consequently, for a nonzero vector $\hatdSmR{5}{7}$ we have
\begin{equation}\label{systemdisc}
\left\{
\begin{array}{l}
d_2=d_3=0,\\
d_1d_4d_5d_6d_7\ne 0
\end{array}
\right.
\quad\text{ or }\quad
\left\{
\begin{array}{l}
d_6=d_7=0,\\
 d_1d_2d_3d_4d_5\ne 0.
 \end{array}
\right.
\end{equation}
On the other hand, since $\mathbf d\in\textup{range}(\mathbf C^T)$, there exists $\mathbf x\in \mathbb F^4$ such that
\begin{equation}\label{solutiondisc}
\mathbf d=\mathbf C^T\mathbf x=\left[\begin{matrix}x_1+x_4& x_2&x_3&x_1&x_2&x_3&x_4\end{matrix}\right].
\end{equation}
One can easily check that set \eqref{systemdisc} does not have solutions of the form \eqref{solutiondisc}.
Thus,  condition $\text{\textup{(W{\scriptsize 5})}}$ holds.

Corollary \ref{k-rank Khatri-Rao lemma} implies that $\mathbf A\odot \mathbf B$ has full column rank. Thus, by Corollary \ref{Corrolary4.9},
 $r_{\mathcal T}=7$ and the third factor  matrix of $\mathcal T$  is unique.

Note that, since $k_{\mathbf C}=1$, it follows from Theorem \ref{NecessaryCPD} that the CPD $\mathcal T=[\mathbf A,\mathbf B,\mathbf C]_7$ is not unique.
\end{example}

\section{Overall CPD uniqueness} \label{Section5}
\subsection{At least one factor matrix has full column rank}

The results for the case $r_{\mathbf C} = R$ are well-studied. They are summarized in \eqref{manyimplications2}. In particular, Theorems \ref{th:1.16}, \ref{Theorem1.12} and \ref{TheoremAlwin} present (\text{U{\scriptsize 2}}), (\text{C{\scriptsize 2}}) and (\text{K{\scriptsize 2}}), respectively, as sufficient conditions for CPD uniqueness.

The implications \textup{(i)}$\Leftrightarrow$\textup{(ii)} $\Leftrightarrow$\textup{(iii)} in Theorem \ref{th:1.16} were proved in \cite{JiangSid2004}. The core implication is \textup{(ii)}$\Rightarrow$\textup{(iii)}. This implication follows almost immediately from
Proposition \ref{prmostgeneral}, which establishes uniqueness of ${\mathbf C}$, as we show below.
Implication \textup{(iii)}$\Rightarrow$\textup{(i)} follows from  Theorem \ref{necessityU2}.
Together with an explanation of the equivalence \textup{(i)}$\Leftrightarrow$\textup{(ii)} we obtain a short proof of Theorem \ref{th:1.16}.

Next, Theorem \ref{Theorem1.12} follows immediately from Theorem \ref{th:1.16}. Theorem \ref{TheoremAlwin} in turn follows immediately from Theorem \ref{Theorem1.12}, cf. scheme \eqref{maindiagramintro}.

{\em Proof of Theorem \ref{th:1.16}.}
\textup{(i)}$\Leftrightarrow$\textup{(ii):}
Follows from Lemma \ref{lemma:equivUm} for $m=2$.

\textup{(ii)}$\Rightarrow$\textup{(iii):}  By Proposition \ref{prmostgeneral},   $r_{\mathcal T}=R$ and  the third factor matrix of $\mathcal T$ is unique.
That is, for any CPD $\mathcal T=[\bar{\mathbf A}, \bar{\mathbf B}, \bar{\mathbf C}]_R$ there exists
 a permutation matrix $\mathbf{\Pi}$ and a nonsingular diagonal matrix $\mathbf{\Lambda}_{\mathbf C}$ such that
 $\bar{\mathbf C}=\mathbf C\mathbf{\Pi}_{\mathbf C}\mathbf{\Lambda}_{\mathbf C}$.

Then, by \eqref{eqT_V},
 $(\mathbf A\odot\mathbf B)\mathbf C^T=(\bar{\mathbf A}\odot\bar{\mathbf B})\bar{\mathbf C}^T=
 (\bar{\mathbf A}\odot\bar{\mathbf B})\mathbf{\Lambda}_{\mathbf C}\mathbf{\Pi}^T_{\mathbf C}\mathbf C^T$.
Since the matrix $\mathbf C$ has full column rank, it follows that $\mathbf A\odot\mathbf B=(\bar{\mathbf A}\odot\bar{\mathbf B})\mathbf{\Lambda}_{\mathbf C}\mathbf{\Pi}^T_{\mathbf C}$. Equating columns, we obtain that there exist nonsingular diagonal matrices $\mathbf{\Lambda}_{\mathbf A}$ and $\mathbf{\Lambda}_{\mathbf B}$ such that $\bar{\mathbf A}=\mathbf A\mathbf{\Pi}_{\mathbf C}\mathbf{\Lambda}_{\mathbf A}$ and $\bar{\mathbf B}=\mathbf B\mathbf{\Pi}_{\mathbf C}\mathbf{\Lambda}_{\mathbf B}$, with $\mathbf{\Lambda}_{\mathbf A}\mathbf{\Lambda}_{\mathbf B}\mathbf{\Lambda}_{\mathbf C}=\mathbf I_R$. Hence, the
CPD of $\mathcal T$ is unique.

\textup{(iii)}$\Rightarrow$\textup{(i):} follows from Theorem \ref{necessityU2}. \qquad\endproof

{\em Proof of Theorem \ref{Theorem1.12}:}
Condition $\textup{(C{\scriptsize 2})}$ in Theorem \ref{Theorem1.12} trivially implies condition $\textup{(U{\scriptsize 2})}$ in Theorem \ref{th:1.16}. Hence, $r_{\mathcal T}=R$ and the CPD of $\mathcal T$ is unique.
\qquad\endproof

{\em Proof of Theorem \ref{TheoremAlwin}:} By Lemma  \ref{compoundkhrkrusk}, condition $\textup{(K{\scriptsize 2})}$ in Theorem \ref{TheoremAlwin} implies condition
$\textup{(C{\scriptsize 2})}$. Hence, by Theorem \ref{Theorem1.12}, $r_{\mathcal T}=R$ and the CPD of $\mathcal T$ is unique.
\qquad\endproof

\begin{remark}
The results obtained in \cite{TenBerge2002} (see the beginning of subsection \ref{subsubsectionnecessity}) can be completed as follows.
Let $\mathcal T=[\mathbf A,\mathbf B,\mathbf C]_R$, $2\leq r_{\mathcal T}=R\leq 4$. Assume without loss of generality that
$r_{\mathbf C}\geq\max(r_{\mathbf A},r_{\mathbf B})$.
For such low tensor rank, we have now a uniqueness condition that is both necessary and sufficient. That condition is that
$r_{\mathbf C}=R$ and \cond{U}{2} holds. Also, for
$R\leq 3$,  \cond{K}{2}, \cond{H}{2}, \cond{C}{2}, and \cond{U}{2} are equivalent.
For these values of $R$, condition \cond{K}{2} is the easiest one to check. For $R=4$,
\cond{H}{2}, \cond{C}{2}, and \cond{U}{2} are equivalent. The proofs are based on a check of all possibilities and are therefore omitted.
\end{remark}

\subsection{No factor matrix is required to have full column rank} \label{subsect:overallm}
Kruskal's original proof (also the simplified version in \cite{Stegeman2007}) of Theorem \ref{theoremKruskal} consists of three main steps. The first step is the proof of the permutation lemma (Lemma \ref{lemmapermutation}). The second and the third step concern the following two implications:
\begin{equation}\label{eq5.1}
\begin{split}
&k_{\mathbf A}+k_{\mathbf B}+k_{\mathbf C}\geq 2R+2\\
&\Rightarrow
\begin{cases}
k_{\mathbf A}+k_{\mathbf B}+k_{\mathbf C}\geq 2R+2,\\
r_{\mathcal T}=R,\\
\text{every factor matrix in}\ \mathcal T=[\mathbf A,\mathbf B,\mathbf C]_R\ \text{is by itself unique}
\end{cases}\\
&\Rightarrow\text{the overall CPD } \mathcal T=[\mathbf A,\mathbf B,\mathbf C]_R \text{ is unique.}
\end{split}
\end{equation}
That is, the proof goes via demonstrating that individual factor matrices are unique.
Similarly, the proof of uniqueness result \textup{(ii)}$\Leftrightarrow$\textup{(iii)} in Theorem \ref{th:1.16} for the case $r_{\mathbf C} = R$, goes, via Proposition \ref{prmostgeneral}, in two steps, which correspond to the proofs of the following two equivalences:
\begin{equation}\label{eq5.2}
\begin{split}
&(\textup{U{\scriptsize 2}})\Leftrightarrow
\begin{cases}
r_{\mathcal T}=R,\\
\text{the third factor matrix of}\ \mathcal T=[\mathbf A,\mathbf B,\mathbf C]_R\ \text{ is unique}
\end{cases}\\
&\Leftrightarrow \text{ the CPD of  }\mathcal T \textup{ is unique.}
\end{split}
\end{equation}
Again the proof goes via demonstrating that one factor matrix is unique.
Note that the second equivalence in \eqref{eq5.2} is almost immediate  since $\mathbf C$ has full column rank. In contrast, the proof of the second implication in \eqref{eq5.1}  is not trivial.

Scheme \eqref{manyimplicationsm} generalizes the first implications  in \eqref{eq5.1} and \eqref{eq5.2}. What remains for the demonstration of overall CPD uniqueness, is the generalization of the second implications.
This problem is addressed in Part II \cite{partII}. Namely, part of the discussion in  \cite{partII} is about investigating how, in cases where possibly none of the factor matrices has full column rank, uniqueness of one or more factor matrices implies overall uniqueness.

\section{Conclusion} \label{sect:concl}

We have given an overview of  conditions guaranteeing the \\
uniqueness of one factor matrix in a PD or uniqueness of an overall CPD. We have discussed properties of compound matrices and used them to build the schemes of implications \eqref{maindiagramintro} and \eqref{eq1.14}. For the case $r_{\mathbf C} = R$ we have demonstrated the overall CPD uniqueness results in \eqref{manyimplications2} using second compound matrices. Using \eqref{maindiagramintro} and \eqref{eq1.14} we have obtained relaxed conditions guaranteeing the uniqueness of one factor matrix, for instance ${\mathbf C}$. The general idea is to the relax the condition on ${\mathbf C}$, no longer requiring that it has full column rank, while making the conditions on ${\mathbf A}$ and ${\mathbf B}$ more restrictive. The latter are conditions on the Khatri-Rao product of $m$-th compound matrices of ${\mathbf A}$ and ${\mathbf B}$, where $m > 2$.  In Part II \cite{partII} we will use the results to derive relaxed conditions guaranteeing the uniqueness of the overall CPD.
\section{Acknowledgments}
 The authors would like to thank the anonymous reviewers
for their valuable comments and their suggestions to improve
the presentation of the paper. The authors are  also grateful for useful  suggestions from
Professor A. Stegeman (University of Groningen, The Netherlands).
\bibliographystyle{siam}
\bibliography{PartI}

\end{document}